\newtheorem{theorem}{Theorem}[section]
\newtheorem{prop}[theorem]{Proposition}
\newtheorem{lemma}[theorem]{Lemma}
\newtheorem{corollary}[theorem]{Corollary}
\theoremstyle{remark}
\newtheorem{definition}[theorem]{Definition}
\newtheorem{remark}[theorem]{Remark}
\numberwithin{equation}{section}
\numberwithin{equation}{section}
\def\bC{\mathbb{C}}
\def\bQ{\mathbb{Q}}
\def\bR{\mathbb{R}}
\def\bZ{\mathbb{Z}}
\def\bP{\mathbb{P}}
\def\cV{\mathcal{V}}
\def\cH{\mathcal{H}}
\def\cJ{\mathcal{J}}
\def\cF{\mathcal{F}}
\def\cC{\mathcal{C}}
\def\cM{\mathcal{M}}
\def\cE{\mathcal{E}}
\def\cD{\mathcal{D}}
\begin{document}

\title{Remarks on the Griffiths infinitesimal invariant of algebraic curves}
\author{Haohua Deng}
\address{Department of mathematics at Duke University, 120 Science Drive, 117 Physics Building,
Campus Box 90320, Durham, North Carolina, 27708-0320}
\email{haohua.deng@duke.edu}

\date{\today}

\maketitle

\small
\begin{center}
\textbf{Abstract}
\end{center}

We study two normal functions defined on the moduli space of smooth genus $4$ algebraic curves including the Ceresa normal function. In particular, we study the vanishing criteria for the Griffiths infinitesimal invariants of both normal functions over a family of genus $4$ curves with certain trigonal structures.

\

\noindent\textbf{MSC classes: } 	14H10, 14H45, 14D07

\noindent\textbf{Keywords: } Algebraic Curve, Ceresa Cycle, Hodge Theory, Normal Function
% ---------------------------------------
\section{Introduction}\label{Sec01}
% ---------------------------------------

% ---------------------------------------
\subsection{Backgrounds}
% ---------------------------------------
Let $C$ be a non-hyperelliptic smooth algebraic curve of genus $g\geq 3$. In \cite{Cer83} Ceresa showed there is a canonically defined homologically trivial algebraic cycle in $\mathrm{CH}^{g-1}(\mathrm{Jac}(C))$ which is not algebraically trivial, thereby verifying the non-triviality of the corresponding Griffiths group. This cycle is known as the Ceresa cycle. In this article we will focus on the Griffiths Abel-Jacobi map \eqref{eqn:griffithsAJmap} associated to the Ceresa cycle and its induced Ceresa normal function over moduli spaces of curves, which we denote as $\nu_c$.

%Alongside with a normal function is the Griffiths infinitesimal invariant \eqref{Eqn:Grifinfinv}, which could be regarded as an alternative description of "derivative" of the normal function. When comes to the Ceresa normal function over the family of curves, the existence of Kuranishi family implies the Griffiths infinitesimal invariant is an intrinsic invariant of the curve, carries very rich information on deformation theory of the Ceresa cycle and hence the curve itself.

Given a family of curves over a quasi-projective base, we are particularly interested in the locus over which the Ceresa normal function $\nu_c$ has non-generic rank (Definition \ref{def:rankofnormalfunction}). This locus is shown to be algebraic by \cite{GZ24}. By studying the rank-dropping locus, we may be able to detect the positive-dimensional locus over which the Ceresa normal function is torsion (which was shown to be also algebraic by \cite{KT24} independently). One way to obtain a better picture of the rank-dropping locus is to study the Griffiths infinitesimal invariant $\delta\nu_c$ associated to $\nu_c$ (Section \ref{sec:grifinfinv}).

In this paper we are going to apply Collino--Pirola's adjunction formula \cite{CP95} to $\delta\nu_c$ for the genus $4$ case. Firstly, over all rank-$1$ deformations we study vanishing criteria for $\delta\nu_c$. Secondly, we study a specific family $\cC\rightarrow B$ of genus $4$ curves. We will show $\nu_c$ has maximal rank (See Section \ref{Sec03} for definitions) over this family, and moreover gives geometric control on the locus over which $\nu_c$ drops rank.

% ---------------------------------------
\subsection{Main results}
% ---------------------------------------

The first main result of this paper is to generalize part of Collino--Pirola's results to the genus $4$ case. Besides the Ceresa normal function $\nu_c$, there is another canonical normal function associated to family of genus $4$ curves defined by Griffiths in \cite[Sec. 6]{Gri83}, which we denote as $\nu_0$\footnote{Strictly speaking, $\nu_0$ is only well-defined over $\cM_4[2]$, the moduli space with an additional level-$2$ structure, but $\delta\nu_0$ is well-defined for any $C\in \cM_4$.}.
\begin{theorem}\label{thm:mainthm1}
    For a general non-hyperelliptic genus $4$ curve $C$, $\delta\nu_0$ and $\delta\nu_c$ have the same vanishing criteria for rank-$1$ deformations of $C$ as both of them vanish exactly along the Schiffer variations.
\end{theorem}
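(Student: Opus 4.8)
The plan is to compute both infinitesimal invariants directly on rank $1$ deformations via Collino--Pirola's adjunction formula and to read off the vanishing locus from the geometry of the canonical model. Set $V=H^0(C,K_C)$, so that the canonical model is $C\subset\bP(V^\vee)=\bP^3$, cut out by the unique quadric $q_0\in\mathrm{Sym}^2 V$ and a cubic $f_0$. The exact sequence $0\to\langle q_0\rangle\to\mathrm{Sym}^2 V\to H^0(2K_C)\to 0$ identifies $H^1(C,T_C)=H^0(2K_C)^\vee$ with the quadratic forms on $V$ modulo the relation dual to $q_0$, and under this identification the cup product $\cup\,\xi\colon V\to H^1(\mathcal{O}_C)=V^\vee$ is exactly the associated symmetric form. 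A deformation $\xi$ therefore has rank $1$ precisely when $\xi=\xi_x=\ell_x^{\,2}$ for a point $x=[\ell_x]\in\bP(V^\vee)$; the requirement $\xi\in H^1(C,T_C)$ pairs $\ell_x^{\,2}$ trivially against $q_0$, i.e. forces $q_0(x)=0$, so the rank $1$ locus is the quadric $Q=\{q_0=0\}$ and the Schiffer variations are the subfamily $\{\xi_x : x\in C\}$, where $C=Q\cap\{f_0=0\}$.

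First I would feed the rank $1$ direction $\xi_x$ into Collino--Pirola's adjunction formula, which expresses $\delta\nu_c(\xi_x)$ and $\delta\nu_0(\xi_x)$ through the multiplication maps of the canonical ring $R=\bigoplus_n H^0(nK_C)$. Because $\cup\,\xi_x$ has rank $1$, the adjunction morphism localises at the single point $x$, and I expect each invariant to reduce to the evaluation at $x$ of the cubic relation $f_0$ that generates the degree $3$ part of the ideal of $C$: concretely, $\delta\nu_\bullet(\xi_x)$ should equal a nonzero scalar times $f_0(x)$ for $x\in Q$. For the Ceresa invariant this is the genus $4$ analogue of the degree $3$ term appearing in Collino--Pirola's genus $3$ computation; for $\nu_0$ it reflects Griffiths' construction, and the two should differ only by a term carrying a factor of $q_0$, which vanishes identically on the rank $1$ locus $Q$ and thereby forces $\delta\nu_c$ and $\delta\nu_0$ to agree there.

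Granting this, the conclusion is immediate: for $x\in Q$ one has $\delta\nu_\bullet(\xi_x)=0$ if and only if $f_0(x)=0$, i.e. if and only if $x\in Q\cap\{f_0=0\}=C$, which is exactly the condition that $\xi_x$ be a Schiffer variation. The genericity hypothesis enters to guarantee that $Q$ is smooth and that $f_0|_Q$, a section of $\mathcal{O}_Q(3)$, is reduced with zero locus exactly the $(3,3)$ curve $C$, so that no spurious rank $1$ deformations on $Q\setminus C$ are annihilated.

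The main obstacle I anticipate is precisely the adjunction computation of the second paragraph: isolating the cubic $f_0$ as the term controlling $\delta\nu_\bullet(\xi_x)$, and verifying that the residual contributions are proportional to $q_0$. This requires the multiplication maps of $R$ to have maximal rank for a general $C$ --- that $\mathrm{Sym}^2 V\to H^0(2K_C)$ has one-dimensional kernel $\langle q_0\rangle$ and that $V\otimes H^0(2K_C)\to H^0(3K_C)$ is surjective, so that the only low-degree relations among the canonical generators are those imposed by $q_0$ and $f_0$. Establishing these Koszul-type surjectivities generically, and tracking the scalar factors carefully enough to confirm that $\delta\nu_c$ and $\delta\nu_0$ define the \emph{same} condition rather than merely sharing a zero locus, is where the bulk of the work will lie.
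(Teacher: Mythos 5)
Your global strategy --- exhibit $\delta\nu_0$ and $\delta\nu_c$, restricted to the rank $1$ locus $Q$, as sections of $\mathcal{O}_Q(3)$ whose common zero locus is the $(3,3)$-curve $C$ --- is the paper's, and your identification of rank $1$ deformations with points of $Q$ and of Schiffer variations with points of $C$ is correct. But the proposal has a genuine gap at exactly the point you flag: the assertion that $\delta\nu_\bullet(\xi_x)$ equals a nonzero scalar times $f_0(x)$ on $Q$ \emph{is} the theorem (a section of $\mathcal{O}_Q(3)$ has zero locus exactly $C$ if and only if it is proportional to $f_0|_Q$), so deferring it to an unperformed adjunction computation leaves the essential content unproved. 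There is also a prior issue you do not address: $\delta\nu_c$ is a functional on the Koszul cohomology group $\mathbb{H}_C^{\vee}$, and it is not automatic what class in that group a rank $1$ deformation $\xi_x$ determines. The paper spends Section 4.2 on precisely this, using the exactness of the Koszul complex over $T_C\mathcal{A}_4$ (Nori's computation) to construct a canonical map $\rho_\eta\colon \mathcal{K}\simeq\mathrm{Sym}^3(H^{0,1}(C))\to\mathbb{H}_C^{\vee}$ sending $\overline{d\omega}\cdot\overline{d\omega}\cdot\overline{d\omega}$ to $\overline{d\omega}\cdot\overline{d\omega}\otimes\nabla_\eta(\sigma_1\wedge\sigma_2\wedge\sigma_3)$, where $\sigma_1,\sigma_2,\sigma_3$ span $W_{\xi_x}=\ker\xi_x$; only after this does ``$\delta\nu_c$ as a cubic on $\bP(H^{0,1}(C))$'' make sense.

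Where you plan an explicit computation in the canonical ring, the paper gets by with the qualitative support criterion of Collino--Pirola (Theorem \ref{thm:collinopirolacoretheorem}): $\delta\nu_{c,C}(\xi\otimes\sigma_1\wedge\sigma_2\wedge\overline{\omega})=0$ for all $\omega$ orthogonal to $\langle\sigma_1,\sigma_2\rangle$ if and only if $\xi$ is supported on the base locus of $\langle\sigma_1,\sigma_2\rangle$. Vanishing at a Schiffer variation $\xi_p$ is then immediate because $p$ is a base point of $W_{\xi_p}$; non-identical vanishing is produced by one explicit configuration (a point $p$ where the two rulings of $Q$ are triply and doubly tangent to $C$, and a point $q\in\omega_1\cap\omega_2\setminus\omega_0$, so that $\xi_q$ is not supported on the base locus $3[p]$ of $\langle\omega_1,\omega_2\rangle$); the ``vanishing implies Schiffer'' direction then follows from the degree count you already have. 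To repair your proposal, replace the explicit identification of the cubic with $f_0$ by these two applications of the support criterion; as written, the step carrying all the weight is assumed rather than proved. For $\delta\nu_0$ no new work is needed: the paper simply quotes Griffiths' theorem that its zero locus on $Q$ is the canonical curve.
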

The main strategy for the proof is to find relations between the Griffiths infinitesimal invariants and the canonical embedding. In particular, we may understand $\delta\nu_c$ in terms of a certain projective hyperplane cutting off the canonical curve, generalizing a similar result of \cite[Thm 4.2.4]{CP95} for the genus $3$ case.

Moreover, by studying the relations between canonical embeddings and rank-$1$ first-order deformations of a curve, we are able to obtain the following corollary: 

\begin{corollary}\label{cor:maincorr1}
    For a general non-hyperelliptic genus $4$ curve $C$, $\delta\nu_{c,C}=0$ determines a degree-$6$ curve in $\bP(H^1(C,\mathcal{O}_C))$ which is exactly the canonical image of $C$.
\end{corollary}

While coming to general first-order deformations, the two normal functions $\nu_0$ and $\nu_c$ behave very differently. In the second part of the paper we study a certain family of genus $4$ curves which can be realized as a triple cover of the projective line branched over $6$ double points. We denote this $3$-dimensional family of curves as $\cC\rightarrow B\subset (\bP^1)^3$ (See Section \ref{Sec05}). Our second main theorem is about the behaviors of $\nu_0$ and $\nu_c$ over this family:
\begin{theorem}\label{thm:mainthm2}
 For the family of trigonal genus $4$ curves $\cC\rightarrow B$, the normal function $\nu_0$ as well as its Griffiths infinitesimal invariant $\delta\nu_0$ vanishes identically while the Ceresa normal function $\nu_c$ has maximal rank. Moreover, if $X\subset B$ is a subvariety on which $\nu_c$ is locally constant, then $\mathrm{dim}(X)\leq 1$ and the closure $\overline{X}$ of $X$ in $(\bP^1)^3$ must not intersect the smooth part of $(\bP^1)^3-B$.  
\end{theorem}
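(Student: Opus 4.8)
The plan is to treat the three assertions separately, beginning with the vanishing of $\nu_0$, which is the most geometric. Recall that Griffiths' normal function $\nu_0$ is built from the antisymmetric combination of the two $g^1_3$ pencils carried by a general genus $4$ curve, i.e.\ from the two rulings of the smooth quadric $Q\subset\bP^3$ through the canonical curve (the need for a level-$2$ structure in its definition is exactly the need to separate these two pencils globally, since the monodromy swaps them). For the trigonal family $\cC\to B$ the governing quadric degenerates to a rank-$3$ cone, so the two rulings coincide and the defining cycle of $\nu_0$ degenerates to zero fibrewise. I would make this precise by writing down the defining cycle of $\nu_0$ over $\cM_4[2]$, pulling it back along $B\to\cM_4$, and checking that it is rationally equivalent to zero on each fibre; the vanishing of $\delta\nu_0$ is then immediate.

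For the remaining two assertions I would work with the explicit description of $\cC\to B$ as the triple cover of $\bP^1$ branched at $6$ double points, using the positions of the branch points (modulo $\mathrm{PGL}_2$) as coordinates on $B$ and the associated variations as an explicit basis of $T_bB\subset H^1(C_b,T_{C_b})$. On this family I would invoke the hyperplane realization of $\delta\nu_c$ established in the proof of Theorem~\ref{thm:mainthm1}: for each first-order deformation $\xi$, the value $\delta\nu_c(\xi)$ is encoded by a hyperplane $H_\xi\subset\bP^3$ cut on the canonical curve, and the Collino--Pirola adjunction formula of \cite{CP95} computes $H_\xi$ in terms of the pairing of $\xi$ against quadratic differentials. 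The trigonal geometry makes both $H^0(C,2K_C)$ and the multiplication map $\mathrm{Sym}^2H^0(C,K_C)\to H^0(C,2K_C)$ completely explicit (the cone quadric being the unique relation), so I would compute the assignment $\xi\mapsto H_\xi$ as an explicit linear map out of $T_bB$.

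The maximal-rank assertion then reduces to showing that this linear map attains the largest rank permitted by the constraints imposed by the cone structure; this is the step I expect to be the main obstacle, since one must rule out unexpected degeneracies of the hyperplane assignment uniformly over an open subset of $B$. I would establish it by a direct rank computation at a single sufficiently general member of the family, where the six branch points are in general position, and then spread the maximal rank to a Zariski-open locus by semicontinuity, the algebraicity of the relevant loci being guaranteed by \cite{KT24}.

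Finally, for the statement about the locally constant locus $X$, local constancy of $\nu_c$ along $X$ forces $\delta\nu_c$ to annihilate $TX$, so $TX\subseteq\ker\delta\nu_c=:\cF\subset TB$, which is an algebraic subbundle by the previous step and is the asserted subbundle. The maximal rank pins down the rank of $\cF$; to deduce $\dim(X)\leq 1$ even when $\cF$ has rank $\geq 2$, I would pass to the second-order behaviour of the infinitesimal invariant and show that $\cF$ admits no integral submanifold of dimension $\geq 2$, the obstruction being a non-involutivity of $\cF$ detected by the quadratic refinement of the adjunction formula. This yields simultaneously the dimension bound and the subbundle containment.
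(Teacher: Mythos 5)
Your treatment of $\nu_0$ matches the paper: the canonical model of each $C_u$ lies on the quadric $z_2^2-z_1z_3=0$, and the paper shows by an explicit parametrization of the two rulings that $[L(t_1)\cap C_u]-[L(t_2)\cap C_u]$ is rationally trivial, so $\nu_0$ (hence $\delta\nu_0$) vanishes identically. That part of your plan is fine.

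The maximal-rank step is where your proposal has a genuine gap. All Kodaira--Spencer classes of this family have rank $2$ (not rank $1$), so the hyperplane realization of $\delta\nu_c$ from the proof of Theorem~\ref{thm:mainthm1} does not apply; the relevant tool is Theorem~\ref{thm:collinopirolacoretheorem}, which tests $\delta\nu_c$ on $\xi\otimes\sigma_1\wedge\sigma_2\wedge\overline\omega$ with $\langle\sigma_1,\sigma_2\rangle=W_\xi$. Carrying out exactly the computation you describe, the paper finds that the directions $\xi\in T_uB$ on which this criterion fails to certify non-vanishing form the cone over a smooth conic in $\bP(T_uB)\simeq\bP^2$ (equation \eqref{eqn:diffeqnpositivefiber}). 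This bounds $\ker(\pi_v\circ d\nu)$ inside that cone and hence gives rank $\geq 2$ (Theorem~\ref{thm:partialmainthm2}), but it cannot give rank $3$: vanishing of the \emph{first} infinitesimal invariant on a direction does not place that direction in the kernel of $d\nu$, and non-vanishing cannot be certified on the conic locus, so no amount of first-order computation at a single fibre, followed by semicontinuity, closes the gap. The paper needs an entirely different mechanism here: if the rank were $2$, Theorem~\ref{Thm:foliationbyalgsubvar} (proved via definability in $\bR_{\mathrm{an,exp}}$ and the definable Chow theorem) would foliate a Zariski-open subset of $B$ by algebraic curves on which $\nu_c$ is locally constant; a general leaf has the same $\bR$-algebraic monodromy group $\mathrm{SU}(3,1)$ as $B$, and a flat real lift of $\nu_c$ along the leaf would be a monodromy-fixed vector in the irreducible representation $\wedge^3_0V$, a contradiction. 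This o-minimal/monodromy input is advertised in the introduction as a main result precisely because the infinitesimal invariant alone is insufficient.

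Two further corrections on the last step. The fibrewise zero locus of $\delta\nu_c$ is the cone over the conic \eqref{eqn:diffeqnpositivefiber}, not a linear subspace, so ``$\ker\delta\nu_c$'' is not a subbundle in your sense; the ``subbundle'' of the theorem statement is exactly this fibrewise quadric cone. And the bound $\dim(X)\leq 1$ needs no second-order or involutivity analysis: if $\nu_c$ is locally constant on $X$ then $T_xX$ lies in that cone, and a smooth plane conic contains no lines, so the cone contains no $2$-planes. Your proposed ``quadratic refinement of the adjunction formula'' is not established anywhere and is not needed.
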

There is an equation defining a subbundle of $TB$ controlling $TX$ found by explicitly calculating the Kodaira--Spencer classes of the family, see Theorem \ref{thm:partialmainthm2}. To show the second part of the Theorem \ref{thm:mainthm2}, we need an alternative description on the rank of normal function which is the third main result of this paper: 
\begin{theorem}[Theorem \ref{Thm:foliationbyalgsubvar}]
    If a normal function $\nu$ associated with some variation of Hodge structures $\cV\rightarrow B$ has co-rank $r_c$, then there is a Zariski open subset $B^\circ\subset B$ foliated by $r_c$-dimensional algebraic subvarieties on each of which $\nu$ is locally constant. 
\end{theorem}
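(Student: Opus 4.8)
The plan is to organize the locally constant loci of $\nu$ into a foliation using the kernel distribution of the infinitesimal invariant, and to deduce algebraicity of the leaves from the algebraicity of the constancy loci of admissible normal functions. I keep the description of rank from Section~\ref{Sec03}: at $b\in B$ the infinitesimal invariant induces a linear map $T_bB\to Q_b$ into a Hodge-graded space $Q_b$, and the co-rank is $r_c=\tdim B-\mathrm{rank}$ at a general point. Since the rank is lower semicontinuous, it is maximal on a Zariski open $B^\circ\subseteq B$, and there the kernels assemble into a holomorphic distribution $\cD\subset TB^\circ$ of rank $r_c$.

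The easy half is the dimension bound. If $\nu$ is locally constant on a subvariety $X$, then along $X$ one may choose an $\olnab$-flat lift $\tilde\nu$ of $\nu$ to $\cV_\bC$, so that $\olnab_v\tilde\nu=0$ for all $v\in TX$; projecting to $Q$ shows $\delta\nu$ annihilates $TX$, whence $TX\subseteq\cD$ on $X\cap B^\circ$ and $\tdim X\le r_c$. The harder half is to realize the bound: through a general point of $B^\circ$ one must produce an $r_c$-dimensional locus on which $\nu$ is genuinely (not merely infinitesimally) locally constant. I would show, using horizontality and the constant-rank behaviour on $B^\circ$, that $\cD$ is involutive and that $\nu$ is constant along its leaves, by integrating the first-order condition: the obstruction to extending a flat lift of $\nu$ to higher order lives again in Hodge-graded data controlled by $\delta\nu$ and by horizontality, so that flatness in the directions of $\cD$ propagates. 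Passing to a contractible neighbourhood where the monodromy is trivial, this identifies each leaf with a locus on which $\nu$ agrees with a fixed flat section of the intermediate Jacobian.

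The main obstacle is algebraicity: a priori the leaves are only analytic, and must be shown to be algebraic subvarieties of $B$. Here I would invoke the algebraicity of positive-dimensional constancy loci of admissible normal functions, in the same spirit as the algebraicity of the torsion locus of $\nu_c$ established in \cite{KT24} (and, for the value zero, the zero-locus theorem of Brosnan--Pearlstein). Locally each leaf is a component of the zero locus of the admissible normal function $\nu-c$, where $c$ is the relevant flat section, and such zero loci are algebraic. Finally I match dimensions: every algebraic component carrying a locally constant $\nu$ has dimension $\le r_c$ by the easy half, while the leaf through a general point of $B^\circ$ has dimension exactly $r_c$; after shrinking $B^\circ$ these algebraic loci therefore coincide with the leaves, exhibiting $B^\circ$ as foliated by $r_c$-dimensional algebraic subvarieties on each of which $\nu$ is locally constant. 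The deepest input is the algebraicity theorem; the involutivity of $\cD$ and the propagation of flatness, though technical, follow from horizontality and the flatness of the Gauss--Manin connection.
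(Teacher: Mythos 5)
Your first two steps---passing to a Zariski open where the rank is constant and integrating the kernel distribution of the differential of $\nu$ into an $r_c$-dimensional holomorphic foliation with $\nu$ locally constant on the leaves---match the paper's Theorem \ref{Thm:GaoZhangthm} and Proposition \ref{prop:foliationbysubmanifold} in substance (one caution: the distribution should be built from $\pi_v\circ d\nu$, i.e.\ from the actual differential of a local lift, not from the first infinitesimal invariant $\delta\nu$, whose vanishing in a direction is necessary but not sufficient for constancy; the paper constructs the leaves directly as projections of $v\cap(e+\cF^0\cV)$ for flat sections $e$, which sidesteps this). The genuine gap is in your algebraicity step. A general leaf is the level set $\{\nu = c\}$ where $c$ is the flat translate of a point $w\in V_\bR$ that is neither zero nor torsion, and typically not even rational. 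The Brosnan--Pearlstein zero-locus theorem and the torsion-locus algebraicity of \cite{KT24} apply to the zero (resp.\ torsion) locus of an admissible $\bZ$- or $\bQ$-normal function; the difference $\nu-c$ for irrational flat $w$ is not an admissible normal function in that sense, so those results do not literally cover the level sets you need. Worse, $w$ need not be invariant under the full monodromy of $\cV\to B$, so $\nu-c$ is not even a globally defined section of $\cJ(\cV)$ over $B$---it only exists on a contractible neighbourhood or on a cover---and there is then no quasi-projective base on which to invoke a zero-locus theorem.

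The paper closes exactly this gap with o-minimality rather than with zero-locus theorems: it shows the lifted mixed period map restricted to a fundamental set is definable in $\bR_{\mathrm{an,exp}}$ (Propositions quoted from \cite{BBKT24}), observes that $D\times\{w\}\subset\cD\simeq D\times\mathcal{S}(W)$ is definable for \emph{every} real $w$, realizes each leaf as the image of $\tilde\Phi^{-1}\bigl(\tilde\Phi(R^n)\cap(D\times\{w\})\bigr)$, and then applies the definable Chow theorem of Peterzil--Starchenko to conclude algebraicity. Definability is checked locally near the boundary on fundamental sets, so neither the irrationality of $w$ nor its failure to be monodromy-invariant causes trouble. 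If you want to keep your architecture, you would need either a level-set generalization of Brosnan--Pearlstein valid for arbitrary real flat values and stated on a cover, or to replace that input with the definable-period-map argument; as written, the ``deepest input'' you invoke does not apply to the objects you apply it to.
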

The proof of algebraicity of these $r_c$-dimensional submanifolds requires o-minimal geometry and its applications in Hodge theory. We give a brief survey in Section \ref{Sec07}. Next we will consider the algebraic monodromy group of the variation over each of these subvarieties. Theorem \ref{thm:mainthm2} will be deduced from a Zariski density argument.

% ---------------------------------------
\subsection{Some related works}
% ---------------------------------------

Normal functions over the moduli space of (stable) curves including the Ceresa normal function have been extensively studied; we refer readers to \cite{Ha13} for a comprehensive overview.

The rank of the Ceresa normal function $\nu_c$ over $\cM_g$ for $g\geq 3$ was proven to be maximal, independently by Gao--Zhang \cite{GZ24} and Hain \cite{Ha24}. As a generalization of \cite{CP95}, Pirola--Zucconi showed in \cite{PZ03} that over a subvariety of $\cM_g$ of dimension at least $2g-1$, the Griffiths infinitesimal invariant of $\nu_c$ must not vanish unless the subvariety is the hyperelliptic locus.

One may also be interested in finding specific families of curves on which $\nu_c$ drops rank, including those (positive-dimensional) families on which the Ceresa cycle itself is torsion. Some known examples include \cite{Lat23}, \cite{QZ24}. The relation between the Ceresa cycle class and its Abel--Jacobi image is also an interesting topic, see for example \cite{LS24}.

%The preprint \cite{GZ24} contains some results similar to those in Section \ref{Sec07}. In particular, the maximal rank part of Theorem \ref{thm:mainthm2} can also be concluded from \cite[Cor. 1.8]{GZ24}. 

\medskip

\noindent\textbf{Acknowledgment.} The author sincerely appreciates Richard Hain for introducing the topic and sharing numerous motivating ideas. The author also thanks Matt Kerr and Colleen Robles for related discussions as well as the anonymous referee for their helpful feedback. Part of this work was motivated during the author's attendence at the ICERM conference "The Ceresa Cycle in Arithmetic and Geometry". The author thanks the organizers for their effort and generous funding support.

% ---------------------------------------
\section{Basic deformation theory}\label{Sec02}
% ---------------------------------------

Let $C$ be a smooth algebraic curve of genus $g$ which is not hyperelliptic. Recall that its first-order deformation space is $H^1(C, T_C)\simeq H^0(C,\Omega_C^{\otimes 2})^{\vee}$, which has dimension $3g-3$.

Consider the Noether map:
\begin{equation}\label{eqn:Noethermap}
  \rho: H^1(C, T_C)\rightarrow \ \mathrm{Hom}(H^0(C,\Omega_C), H^1(C,O_C))\simeq H^1(C, O_C)^{\otimes 2}
\end{equation}
which factors through $\mathrm{Sym}^2(H^{0,1}(C))$ and is injective by Noether's theorem. For any $\xi\in H^1(C, T_C)$, we say $\xi$ has rank $r$ if its image under Noether's map \eqref{eqn:Noethermap} has rank $r$. We also note that any rank-$1$ transformations has the form $\overline{\omega}\cdot \overline{\omega}\in \mathrm{Sym}^2(H^{0,1}(C))$ for some $\overline{\omega}\in H^{0,1}(C)$, and any rank-$2$ transformation has the form $\overline{\omega_1}\cdot \overline{\omega_2}$ for independent $\overline{\omega_1}, \overline{\omega_2}\in H^{0,1}(C)$.

Fix an orthonormal basis $(\omega_1,...,\omega_g)$ of $H^0(C, \Omega_C)\simeq H^{1,0}(C)$. The canonical embedding of $C$ is read as:
\begin{equation}\label{eqn:canonicalembedding}
    C\rightarrow \bP(H^{0,1}(C))\simeq \bP^{g-1}, \ p \rightarrow [\omega_1(p):...:\omega_g(p)].
\end{equation}
Consider the $2$-Veronese embedding $\bP(H^{0,1}(C))\rightarrow \bP(\mathrm{Sym}^2(H^{0,1}(C)))$ whose image may be identified with the projective space of rank-$1$ transformations in $\mathrm{Sym}^2(H^{0,1}(C))$. 
\begin{definition}\label{Def:SchifferVar}
We say a rank-$1$ transformation $\xi_p\in \bP(\mathrm{Sym}^2(H^{0,1}(C)))$ is a Schiffer variation at $p\in C$ if $\xi_p$ is the Veronese image of $p\in C$.
\end{definition}
\begin{prop}
    A rank-$1$ transformation $\xi$ is a Schiffer variation at $p\in C$ if and only if $p\in C$ is a base point of $W_{\xi}:=\mathrm{Ker}(\xi)\leq H^{1,0}(C)$ viewed as a linear subsystem of $|\Omega_C^1|$.
\end{prop}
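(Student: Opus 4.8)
The plan is to translate both conditions into a single equality of hyperplanes inside $H^{1,0}(C)$, governed by the point of $\bP(H^{0,1}(C))$ that $\xi$ determines. Writing a rank $1$ transformation as $\xi=\overline{\omega}\cdot\overline{\omega}$ with $\overline{\omega}\in H^{0,1}(C)$, I view $\xi$ as the symmetric contraction $H^{1,0}(C)\to H^{0,1}(C)$ built from the Hodge pairing $\langle\cdot,\cdot\rangle\colon H^{0,1}(C)\times H^{1,0}(C)\to\bC$ (which exhibits $H^{1,0}(C)\simeq H^{0,1}(C)^\vee$), so that $\xi(\eta)=\langle\overline{\omega},\eta\rangle\,\overline{\omega}$. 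Then $W_\xi=\mathrm{Ker}(\xi)$ is exactly the hyperplane $\overline{\omega}^\perp=\{\eta\in H^{1,0}(C):\langle\overline{\omega},\eta\rangle=0\}$ cut out by the class $[\overline{\omega}]\in\bP(H^{0,1}(C))$.

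The first key step is to identify the canonical image of $p$ with an evaluation functional. In the dual-basis coordinates attached to the orthonormal basis $(\omega_1,\dots,\omega_g)$, the canonical point $[\omega_1(p):\cdots:\omega_g(p)]\in\bP(H^{0,1}(C))$ appearing in \eqref{eqn:canonicalembedding} is precisely the class of the evaluation functional $\eta\mapsto\eta(p)$, well-defined up to the choice of local trivialization of $\Omega_C$ at $p$. Calling this class $[\overline{\omega}_p]$, I obtain $\overline{\omega}_p^\perp=\{\eta\in H^0(C,\Omega_C):\eta(p)=0\}=H^0(C,\Omega_C(-p))$. By Definition \ref{Def:SchifferVar} together with the injectivity of the $2$-Veronese embedding, $\xi$ is a Schiffer variation at $p$ if and only if $[\overline{\omega}]=[\overline{\omega}_p]$ in $\bP(H^{0,1}(C))$, equivalently $W_\xi=H^0(C,\Omega_C(-p))$.

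The second key step handles the base-point side. By definition $p$ is a base point of $W_\xi$ exactly when every section in $W_\xi$ vanishes at $p$, that is, when $W_\xi\subseteq H^0(C,\Omega_C(-p))$. Since $C$ is non-hyperelliptic the canonical system is base-point free, so $H^0(C,\Omega_C(-p))$ has dimension $g-1$ and is a hyperplane; as $W_\xi$ is likewise a hyperplane, the inclusion $W_\xi\subseteq H^0(C,\Omega_C(-p))$ forces equality. Combining this with the previous step, both statements ``$\xi$ is a Schiffer variation at $p$'' and ``$p$ is a base point of $W_\xi$'' are equivalent to the single equality $W_\xi=H^0(C,\Omega_C(-p))$, which finishes the argument.

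I expect the only delicate point to be bookkeeping with the dualities: checking that the coordinatization of the canonical embedding in \eqref{eqn:canonicalembedding} genuinely matches the evaluation functional governing $W_\xi$, and that the contraction $\xi\colon H^{1,0}(C)\to H^{0,1}(C)$ has kernel the claimed perpendicular hyperplane $\overline{\omega}^\perp$. Once these identifications are pinned down, the equivalence reduces to the elementary fact that two hyperplanes in $H^{1,0}(C)$ coincide as soon as one contains the other, and no genuine geometric obstacle remains.
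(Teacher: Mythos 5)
Your proof is correct; the paper actually states this proposition without any proof, and your argument --- reducing both conditions to the single equality of hyperplanes $W_\xi = H^0(C,\Omega_C(-p))$ via the duality $H^{0,1}(C)\simeq H^{1,0}(C)^\vee$ and the hyperplane-containment-implies-equality step --- is exactly the standard one the author is implicitly invoking. One cosmetic point: base-point-freeness of the canonical system holds for every curve of genus $\geq 2$ (non-hyperellipticity is only needed for very ampleness), so the computation $h^0(\Omega_C(-p))=g-1$ does not actually require the non-hyperelliptic hypothesis; this does not affect the validity of your argument.
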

\begin{proof}
    Any $\overline{\omega}\in \bP(H^{0,1}(C))$ gives a unique hyperplane $\xi_\omega\in \bP(H^{1,0}(C))$ representing the annihilator of $\overline{\omega}$. Therefore $\xi_\omega$ has a base point $p\in C$ if and only if $\overline{\omega}=\overline{\omega}_p$ comes from the canonical image of $p\in C$.
\end{proof}
In general, for an effective divisor $D$ on $C$, we say $\xi\in H^1(C, T_C)\simeq H^0(C,\Omega_C^{\otimes 2})^{\vee}$ is supported on $D$ if $\xi$ annihilates $H^0(C,\Omega_C^{\otimes 2}(-D))$. A Schiffer variation $\xi_p$ is thus supported on $D=[p]$.

If $g\geq 4$, $\bP(H^1(C,T_C))\subset \bP(\mathrm{Sym}^2(H^{0,1}(C)))$ is an intersection of hyperplanes. These hyperplanes cut the Veronese image of $\bP(H^{0,1}(C))$ along a set of quadric hypersurfaces in $\bP(H^{0,1}(C))$ containing the canonical image of $C$. We arrive at Griffiths' famous theorem:
\begin{theorem}[Griffiths]
    For a non-hyperelliptic curve $C$, every rank-$1$ transformation in $\bP(H^1(C,T_C))$ is a Schiffer variation if and only if the canonical image of $C$ is cut out by quadric hypersurfaces.
\end{theorem}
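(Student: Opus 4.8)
The plan is to reduce both conditions to a single statement about the common zero locus of the quadrics through the canonical curve, using the $2$-Veronese picture set up in the paragraph preceding the theorem. Write $v_2\colon \bP(H^{0,1}(C)) \to \bP(\mathrm{Sym}^2 H^{0,1}(C))$ for the Veronese embedding, so that its image is exactly the locus of rank $1$ transformations and, by Definition \ref{Def:SchifferVar}, the Schiffer variations are precisely $v_2$ of the points of the canonical curve. Let $I_2 \subset \mathrm{Sym}^2 H^{1,0}(C)$ denote the kernel of the multiplication map $\mathrm{Sym}^2 H^{1,0}(C) \to H^0(C,\Omega_C^{\otimes 2})$, i.e. the space of quadrics vanishing on the canonical image of $C$. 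By the duality between Noether's map $\rho$ and this multiplication map, the image of $\rho$ is the annihilator of $I_2$, so $\bP(H^1(C,T_C))$ is exactly the intersection of the hyperplanes attached to $I_2$. Finally set $Z := \bigcap_{Q \in I_2}\{Q = 0\} \subset \bP(H^{0,1}(C))$, the base locus of $I_2$, noting that $C \subseteq Z$ by construction.

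The central step is the incidence identity: for $[\overline\omega] \in \bP(H^{0,1}(C))$ and $Q \in I_2$, pairing $Q \in \mathrm{Sym}^2 H^{1,0}(C)$ against the decomposable vector $\overline\omega\cdot\overline\omega \in \mathrm{Sym}^2 H^{0,1}(C)$ representing $v_2([\overline\omega])$ returns exactly the value $Q(\overline\omega)$ of the quadratic form at $\overline\omega$. Consequently $v_2([\overline\omega])$ lies on the hyperplane attached to $Q$ if and only if $[\overline\omega]$ lies on the quadric $Q$. Ranging over all $Q \in I_2$, a rank $1$ transformation $v_2([\overline\omega])$ lies in $\bP(H^1(C,T_C))$ if and only if $[\overline\omega] \in Z$. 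I expect this identification to be the only substantive point; everything else is formal once it is in place.

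It then follows that the rank $1$ transformations lying in $\bP(H^1(C,T_C))$ are exactly $v_2(Z)$, whereas the Schiffer variations are $v_2(C)$. Since $v_2$ is injective, the hypothesis that every rank $1$ transformation in $\bP(H^1(C,T_C))$ is a Schiffer variation is equivalent to $Z \subseteq C$, hence, using $C \subseteq Z$, to the equality $Z = C$. This last equality is by definition the assertion that the canonical curve is cut off by the quadrics $I_2$, which establishes both directions of the equivalence simultaneously.

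Two points deserve care in the write-up. First, \emph{cut off by quadric hypersurfaces} should be read as the set-theoretic equality $Z = C$ of the base locus of $I_2$ with the canonical curve; set-theoretic equality is all that the injectivity-of-$v_2$ argument compares, so no scheme structure is needed. Second, the genus enters only through $I_2$: for $g \geq 4$ the preceding paragraph guarantees $I_2 \neq 0$ and that $\bP(H^1(C,T_C))$ is a proper linear subspace cut out by the hyperplanes from $I_2$, while for $g = 3$ one has $I_2 = 0$ and $Z = \bP(H^{0,1}(C))$, so both sides of the equivalence fail and the statement, though vacuous in content, remains consistent.
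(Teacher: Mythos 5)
Your argument is correct and is essentially the one the paper has in mind: the paper states this theorem without proof, but the paragraph immediately preceding it (identifying $\bP(H^1(C,T_C))$ as an intersection of hyperplanes that cut the Veronese image along the quadrics through the canonical curve) is exactly your incidence identity, and your reduction to the set-theoretic equality $Z=C$ completes it in the standard way.
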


The dual of the map \eqref{eqn:Noethermap} factoring through $\mathrm{Sym}^2(H^{0,1}(C))$ gives 
\begin{equation}\label{eqn:dualnoethermap}
    \rho^\vee: \mathrm{Sym}^2H^{1,0}(C)\rightarrow H^1(C, \Omega_C^{\otimes2}).
\end{equation}
When $g=4$, the map $\rho^\vee$ has a one-dimensional kernel space whose generator defines a unique quadric surface $Q\subset \bP(H^{0,1}(C))\simeq \bP^3$ containing the canonical image of $C$.  In this case not every rank-$1$ transformation is a Schiffer variation.
% ---------------------------------------
\section{Normal functions and Griffiths infinitesimal invariants}\label{Sec03}
% ---------------------------------------

% ---------------------------------------
\subsection{Basic theory of normal functions}
% ---------------------------------------
Let $\mathcal{V}\rightarrow B$ be an integral polarized variation of Hodge structures ($\mathbb{Z}$-PVHS) of weight $-1$ with type $(V_\mathbb{Z}, Q, h^{p,q})$ defined over a quasi-projective base $B$. The intermediate Jacobian associated to $\mathcal{V}\rightarrow B$ is $\mathcal{J}(\mathcal{V}):=\frac{\mathcal{V}_\mathbb{C}}{\mathcal{F}^0\cV_{\bC}+\cV_\bZ}$. We also define
\begin{equation}
    \cJ_{h}(\cV):=\mathrm{ker}\{\overline{\nabla}: \cJ(\cV)\rightarrow \frac{\cV}{\cF^{-1}}\otimes \Omega_B^1\}
\end{equation}
as the horizontal part of $\cJ(\cV)$.
\begin{definition}
    A normal function $\nu$ assocated to $\cV\rightarrow B$ is a section of $\cJ_h(\cV)\rightarrow B$.
\end{definition}
In general, we only care about normal functions which are \textbf{admissible}. The precise definition can be found at for example \cite{Sai96}. Briefly speaking, admissibility means the existence of relative monodromy weight filtrations and nilpotent orbit theorem (\cite{Sch73}) along the boundary strata.

Let $\mathcal{X}\rightarrow B$ be a family of smooth projective varieties of dimension $n$ and $\mathcal{Z}\subset \mathcal{X}$ be a subvariety of codimension $r$ such that for each $b\in B$, $Z_b:=\mathcal{Z}\cap X_b\in \mathrm{CH}^{r}_{\mathrm{hom}}(X_b)$. For each $b\in B$ the Griffiths Abel--Jacobi map associates $Z_b$ an element in $J(H_b):=\frac{H_b(\bC)}{F^0H_b+H_b(\bZ)}$ where $H_b:=H^{2n-2r+1}(X_b)(n-r+1)$ defined as follows.

Using the identification:
\begin{equation}
    J(H_b):=\frac{H_b(\bC)}{F^0H_b+H_b(\bZ)}\cong \frac{(F^0H_b)^{\vee}}{H_b(\bZ)},
\end{equation}
there is the defining formula
\begin{equation}\label{Def:GriffAJmap}
    \nu(\omega(b))=\int_{\Gamma_b}\omega_b
\end{equation}
where $\omega(b)\in F^0H_b$ and $\Gamma_b\subset X_b$ is a $(2n-2r+1)$-chain satisfies $\partial \Gamma_b=Z_b$, both vary smoothly locally in $b$.

By patching together all intermediate Jacobians $J(H_b)$ and the Abel--Jacobi image given by $Z_b$, we obtain a normal function $\nu$ as a section of $\cJ\rightarrow B$. Such a normal function is said to be of geometric origin. The following theorem is a standard fact in Hodge theory, see for example \cite[Theorem 7.3]{BZ90}.
\begin{theorem}
    Normal functions of geometric origins are admissible.
\end{theorem}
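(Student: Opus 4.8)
The plan is to realize the normal function $\nu$ as the extension class of a variation of mixed Hodge structure (VMHS) of geometric origin, and then to deduce the admissibility of $\nu$ from the admissibility of this VMHS. Recall that specifying a normal function as a section of $\cJ_h(\cV)$ is the same as specifying an extension of variations of Hodge structure
\[
0\rightarrow \cV\rightarrow \cE\rightarrow \bZ(0)\rightarrow 0,
\]
with extension class in $\mathrm{Ext}^1_{\mathrm{VHS}}(\bZ(0),\cV)$ equal to $\nu$. Admissibility of $\nu$ in the sense recalled above---existence of relative monodromy weight filtrations together with the nilpotent orbit theorem along the boundary strata---is by definition the admissibility of the VMHS $\cE$. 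Thus it suffices to construct $\cE$ geometrically and to invoke the admissibility of geometric variations of mixed Hodge structure.

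First I would build $\cE$ fibrewise from the cycle. For each $s\in S$ the homologically trivial cycle $\mathcal{Z}_s$ determines, through the localization (Gysin) sequence for the pair $(X_s,|\mathcal{Z}_s|)$---equivalently through the cohomology of the complement $X_s\setminus |\mathcal{Z}_s|$---a mixed Hodge structure $E_s$ fitting into
\[
0\rightarrow H_s\rightarrow E_s\rightarrow \bZ(0)\rightarrow 0,
\]
where $H_s$ is the weight $-1$ Hodge structure defined in the excerpt. Homological triviality of $\mathcal{Z}_s$ is exactly what forces the cycle class to lift, so that this sequence splits over $\bQ$ while carrying a nontrivial extension class in $\mathrm{Ext}^1_{\mathrm{MHS}}(\bZ(0),H_s)\cong J(H_s)$; by construction this class is the Abel--Jacobi image $\nu(s)$ of \eqref{Def:GriffAJmap}. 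As $s$ varies these assemble into a VMHS $\cE$ on $S$ with weight-graded pieces $\cV$ and $\bZ(0)$ and with extension class $\nu$.

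The essential point is then that $\cE$ is of geometric origin: since $\mathcal{X}\rightarrow S$ and $\mathcal{Z}$ are algebraic, $\cE$ is (after stratifying $S$ and resolving the pair) a subquotient of the cohomology of an algebraic morphism, hence underlies a mixed Hodge module on $S$ in the sense of M. Saito \cite{Sai96}. I would then invoke the fundamental admissibility theorem for such variations---due to Steenbrink--Zucker when $\dim S=1$ and to Saito in general---which supplies precisely the relative monodromy weight filtrations and the limit mixed Hodge structures demanded by the definition of admissibility. The main obstacle, and the real content of the statement, is this last step over a base of arbitrary dimension. Here I expect to reduce to the curve case by Kashiwara's criterion: restricting $\cE$ to general analytic arcs meeting each boundary stratum transversally and verifying that the relative weight filtration exists along each branch and is independent of the chosen arc, which is where one genuinely uses the one-dimensional Steenbrink--Zucker result together with the compatibility of the nilpotent orbit along the several boundary divisors.
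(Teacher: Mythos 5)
The paper does not prove this statement: it is quoted as a known result (attributed to Voisin) and used as a black box, so there is no argument of the author's to compare yours against. Your sketch is the standard proof and is essentially sound: a normal function of geometric origin is the extension class of the variation of mixed Hodge structure $0\rightarrow \cV\rightarrow \cE\rightarrow \bZ(0)\rightarrow 0$ obtained fibrewise from the localization sequence of the pair $(X_s,|\mathcal{Z}_s|)$ (homological triviality lets the cycle class in $H^{2r}_{|\mathcal{Z}_s|}(X_s)$ lift, and Carlson's description of $\mathrm{Ext}^1_{\mathrm{MHS}}(\bZ(0),H_s)\cong J(H_s)$ identifies the class with the Abel--Jacobi image), and admissibility of $\nu$ is by definition admissibility of $\cE$, which holds because $\cE$ underlies a mixed Hodge module / is curve-testably admissible by Steenbrink--Zucker plus Kashiwara. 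Two small points to tighten. First, your phrase that the fibrewise sequence ``splits over $\bQ$'' clashes with the paper's own convention (Section 7), where splitting over $\bQ$ of \eqref{eqn:exactsequenceZMHS} is precisely the definition of $\nu$ being torsion; what you mean is only that the underlying filtered $\bQ$-vector spaces split, which is automatic and carries no content, so the sentence should be dropped or rephrased. Second, to assemble the fibrewise $E_s$ into a genuine VMHS you need $|\mathcal{Z}|\rightarrow S$ to be sufficiently equisingular; in general one first shrinks $S$ to a Zariski open set (or works directly with the mixed Hodge module pushforward), and it is worth saying so explicitly since admissibility is a condition at the boundary of exactly such an open set.
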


%-------------------------------------------------
\subsection{The rank of normal function}
%-------------------------------------------------

In this subsection we fix a $\mathbb{Z}$-PVHS $\cV\rightarrow B$ of weight $-1$ and an admissible normal function $\nu\in H^0(B, \cJ(\cV))$. For any $b\in B$, we have the exact sequence:
\begin{equation}
    0\rightarrow V_b/F^0V_b\rightarrow T_{\nu(b)}\cJ(\cV)\rightarrow T_bB\rightarrow 0
\end{equation}
with the natural splitting $T_bB \rightarrow T_{\nu(b)}\cJ(\cV)$ given by the horizontal leaves of $\cV\rightarrow B$. Let $\pi_v$ be the associated projection $T_{\nu(b)}\cJ(\cV)\rightarrow V_b/F^0V_b$.
\begin{definition}\label{def:rankofnormalfunction}
    The rank of $\nu$ at a point $b\in B$ is defined to be the rank of the tangent map:
    \begin{equation}
       \pi_v\circ d\nu|_b: T_bB\rightarrow T_{\nu(b)}\cJ(\cV)\rightarrow V_b/F^0V_b.
    \end{equation}
    The rank of $\nu$ is defined to be the rank of $\nu$ at a very general point $b\in B$.
\end{definition}

\begin{remark}
    By using the identification
    \begin{equation}
        \cJ(\cV)\simeq \cJ(\cV_\bR)=\cV_\bR/\cV_\bZ,
    \end{equation}
    as $C^{\infty}$-manifolds, we may also define the rank of $\nu$ at $b\in B$ by half of the rank of the real tangent map:
    \begin{equation}
        \pi_v\circ d\nu_\bR|_b: (T_bB)_\bR\rightarrow T_{\nu(b)}\cJ(\cV_\bR)\rightarrow V_{b,\bR}.
    \end{equation}
\end{remark}
The following lemma is an immediate consequence from the definition:
\begin{lemma}
    $\nu$ is locally constant if and only if it has rank $0$. In this case for any $b\in B$, there exists a neighborhood $b\in U\subset B$ such that $\nu(U)\subset \cJ(\cV)$ admits a lift in $\Gamma^{\nabla}(U, \cV)$, the $\nabla$-flat sections of $\cV$ on $U$.
\end{lemma}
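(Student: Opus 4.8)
The plan is to work throughout with the $C^\infty$ identification $\cJ(\cV)\simeq \cV_\bR/\cV_\bZ$ supplied by the Remark, under which the Gauss--Manin connection $\nabla$ descends to a flat connection on the real torus bundle $\cV_\bR/\cV_\bZ\to S$. First I would pin down the intrinsic meaning of the vertical part of the tangent map: in this picture $\pi_v\circ d\nu_\bR|_s$ is exactly the covariant derivative $(\nabla\nu)(s)$ of $\nu$ regarded as a section of this flat torus bundle. Indeed, choosing a local $\nabla$-parallel frame trivializes $\cV_\bR/\cV_\bZ$ as $U\times(\bR^N/\bZ^N)$, and $\pi_v\circ d\nu_\bR$ is then literally the ordinary derivative of the (locally single-valued) lift of $\nu$ in these flat coordinates. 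Combined with the Remark, this gives the local dictionary: $\nu$ has rank $0$ at $s$ if and only if $(\nabla\nu)(s)=0$.

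Next I would promote vanishing at a very general point to vanishing on all of $S$. The map $\pi_v\circ d\nu$ is a holomorphic bundle map $T_S\to \cV/\cF^0$ over $S$, so its pointwise rank is lower semicontinuous and attains its maximum on a Zariski-dense open set; this maximum is, by definition, the rank of $\nu$. Hence $\mathrm{rank}\,\nu=0$ forces $\pi_v\circ d\nu\equiv 0$ identically, and therefore, by the Remark once more, $\pi_v\circ d\nu_\bR\equiv 0$, i.e. $\nabla\nu\equiv 0$ on $S$.

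Finally I would invoke the standard fact that a parallel section of a flat torus bundle locally lifts to a parallel section of the covering vector bundle: on a simply connected $U\ni s$ the condition $\nabla\nu\equiv 0$ says $\nu$ is constant in a flat frame, so it lifts to some $\hat\nu\in\Gamma^\nabla(U,\cV_\bR)\subseteq \Gamma^\nabla(U,\cV)$. This establishes the implication ``rank $0\Rightarrow$ locally constant'' together with the asserted flat lift. The converse is immediate: if $\nu$ is locally constant, i.e. locally agrees with a $\nabla$-flat section, then $d\nu_\bR$ takes values in the horizontal (flat) distribution, whence $\pi_v\circ d\nu_\bR=0$ and $\nu$ has rank $0$ at every point, in particular at a very general one.

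The step I expect to require the most care is the identification of $\pi_v\circ d\nu_\bR$ with the flat covariant derivative $\nabla\nu$, and, in tandem, the semicontinuity argument that upgrades the generic vanishing built into the definition of rank to genuine vanishing over all of $S$; once the intrinsic meaning of the vertical real tangent map is fixed, the remaining lifting statement is just the local integration of a flat connection and is routine.
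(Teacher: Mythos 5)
Your proposal is correct, and it is essentially the argument the paper has in mind: the paper states this lemma without proof, calling it ``an immediate consequence from the definition,'' and your write-up is the natural unpacking of that claim via the flat real torus bundle $\cV_\bR/\cV_\bZ$. The one genuinely non-tautological step, which you correctly isolate, is upgrading rank $0$ at a \emph{very general} point to vanishing of the vertical derivative everywhere; your semicontinuity argument handles this.

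One small imprecision worth flagging: the vertical projection $\pi_v$ is induced by the flat structure on $\cV_\bR/\cV_\bZ$, which is only a real-analytic (not holomorphic) splitting of $T\cJ(\cV)$, since $\cF^0$ varies holomorphically inside the flat frame; so calling $\pi_v\circ d\nu$ a \emph{holomorphic} bundle map $T_S\to\cV/\cF^0$ is not quite justified as stated. This does not damage the proof: the rank-degeneracy locus of a real-analytic bundle map is still a proper real-analytic subvariety with dense complement, so the maximum rank is still attained at a very general point and the conclusion $\pi_v\circ d\nu_\bR\equiv 0$ follows. (Alternatively, one can argue directly with a local holomorphic lift $v$ of $\nu$ and observe that the real-analytic function $s\mapsto \mathrm{rank}\,(\pi_v\circ d\nu_\bR|_s)$ is lower semicontinuous with analytic degeneracy loci.) The remaining steps --- identifying rank $0$ at $s$ with vanishing of the flat covariant derivative, integrating the flat connection on a simply connected $U$ to produce the lift in $\Gamma^{\nabla}(U,\cV)$, and the converse direction --- are all correct and routine, as you say.
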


There will be more discussions about the normal function rank in Section \ref{Sec07}.
%-------------------------------------------------
\subsection{The Griffiths infinitesimal invariant}\label{sec:grifinfinv}
%-------------------------------------------------

Associated to a normal function $\nu$ is a differential invariant of $\nu$ studied by Griffiths \cite{Gri83}, denoted as $\tilde{\delta}\nu$ which is described as follows.

For $b\in B$, choose any local lift $v$ of $\nu$ around $b$, we have
\begin{equation}
\nabla v\in \mathrm{Ker}\{\Omega_B^1\otimes \cF^{-1}\xrightarrow{\nabla} \Omega^2_B\otimes \cF^{-2}\}. 
\end{equation}
Consider the Koszul complex:
\begin{equation}\label{eqn:fullkoszulcomplex}
    \cF^{0}\xrightarrow{\nabla} \Omega^1_b\otimes \cF^{-1}\xrightarrow{\nabla} \Omega^2_b\otimes \cF^{-2},
\end{equation}
since different choices of $v$ are differed by elements in $\cF^0$ and $\nabla$-flat sections, $\tilde{\delta}\nu_b$ gives a well-defined element in the Koszul cohomology group:
\begin{equation}\label{Eqn:Grifinfinv}
    \tilde{\delta}\nu_b\in \tilde{\mathbb{H}}_b:=\frac{\mathrm{Ker}\{\Omega^1_b\otimes \cF_b^{-1}\xrightarrow{\nabla} \Omega^2_b\otimes \cF_b^{-2}\}}{\mathrm{Img}\{\cF_b^{0}\xrightarrow{\nabla} \Omega^1_b\otimes \cF_b^{-1}\}}
\end{equation}
This is usually called the (full) Griffiths infinitesimal invariant of $\nu$ at $b\in B$. 

In practice a weaker invariant called the first Griffiths infinitesimal invariant $\delta\nu$ is more suitable for calculation. Consider the graded part of \eqref{eqn:fullkoszulcomplex}:
\begin{equation}\label{eqn:koszulcomplex}
    H_b^{0,-1}\xrightarrow{\overline{\nabla}} \Omega^1_b\otimes H_b^{-1,0}\xrightarrow{\overline{\nabla}} \Omega^2_b\otimes H_b^{-2,1},
\end{equation}
The first Griffiths infinitesimal invariant gives a well-defined element in:
\begin{equation}\label{Eqn:1stGrifinfinv}
    \delta\nu_b \in\mathbb{H}_b:=\frac{\mathrm{Ker}\{\Omega^1_b\otimes H_b^{-1,0}\xrightarrow{\overline{\nabla}} \Omega^2_b\otimes H_b^{-2,1}\}}{\mathrm{Img}\{H_b^{0,-1}\xrightarrow{\overline{\nabla}} \Omega^1_b\otimes H_b^{-1,0}\}}.
\end{equation}
By considering the dual complex of \eqref{eqn:koszulcomplex}:
\begin{equation}\label{eqn:dualkoszulcomplex}
    \wedge^2T_b\otimes H_b^{1,-2}\rightarrow T_b\otimes H_b^{0,-1}\xrightarrow{} H_b^{-1,0},
\end{equation}
$\delta\nu_b$ can also be seen as a linear function on:
\begin{equation}\label{Eqn:1stGrifinfinvdualrepresentation}
    \mathbb{H}_b^{\vee}:=\frac{\mathrm{Ker}\{T_b\otimes H_b^{0,-1}\xrightarrow{} H_b^{-1,0}\}}{\mathrm{Img}\{\wedge^2T_b\otimes H_b^{1,-2}\xrightarrow{} T_b\otimes H_b^{0,-1}\}}
\end{equation}
given by:
\begin{equation}
    \delta\nu_b(\sum\xi_i\otimes w_i)=\sum\langle \nabla_{\xi_i}v, w_i\rangle
\end{equation}
where $v$ is any local lift of $\nu$ around $b$.

The Griffiths infinitesimal invariant is a powerful tool to study the local behavior of a normal function. In particular, we have:
\begin{theorem}[\cite{Gri83}, Sec. 6(a)]
    The normal function $\nu$ is locally constant, or say has rank $0$, if and only if $\tilde{\delta}\nu_b= 0$ at every $b\in B$. 
\end{theorem}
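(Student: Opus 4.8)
The plan is to prove the two implications separately, with essentially all of the content living in the reverse direction. Throughout I work locally on a simply connected neighborhood $U$ of a chosen $s_0\in S$.

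For the forward implication, suppose $\nu$ is locally constant. By the preceding Lemma, around any $s\in S$ the section $\nu$ admits a $\nabla$-flat local lift $v\in\Gamma^{\nabla}(U,\cV)$, so that $\nabla v=0$ on $U$. Since $\tilde\delta\nu_s$ is, for any local lift, represented by the class of $\nabla v$, it vanishes at every point. This direction is immediate.

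For the reverse implication, choose a holomorphic lift $v$ of $\nu$ (lifting the local section of $\cV/\cF^0$ obtained after killing $\cV_\bZ$). Horizontality of the normal function gives $\eta:=\nabla v\in\cF^{-1}\otimes\Omega^1_S$, and flatness of the Gauss--Manin connection gives $\nabla\eta=0$. The first key observation is that $\nu$ is locally constant near $s_0$ if and only if the equation $\nabla\sigma=\eta$ admits a holomorphic solution $\sigma\in\cF^0$ over a neighborhood of $s_0$: any two holomorphic lifts of $\nu$ differ by a holomorphic section of $\cF^0$ together with a flat section, so $v-\sigma$ is a flat lift exactly when $\nabla\sigma=\eta$. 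The second observation is that $\tilde\delta\nu_s$ is, by its very construction, the obstruction to solving $\nabla\sigma=\eta$ to zeroth order at $s$; hence the hypothesis $\tilde\delta\nu_s=0$ for all $s$ says precisely that $\eta$ lies pointwise in the image of $\nabla\colon\cF^0\to\cF^{-1}\otimes\Omega^1_S$ at every point of $U$.

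It remains to upgrade this pointwise solvability to a genuine holomorphic solution over a neighborhood, and I would do this by descending induction along the (finite) Hodge filtration. By Griffiths transversality $\nabla$ shifts the filtration by one and induces the $\cO$-linear graded maps $\olnab\colon\mathrm{gr}^p_{\cF}\to\mathrm{gr}^{p-1}_{\cF}\otimes\Omega^1_S$. Beginning with the top graded piece of $\eta$, one solves the graded equation $\olnab(\bar\sigma)=\bar\eta$, where $\bar\eta$ is a holomorphic section lying pointwise in $\mathrm{Img}(\olnab)$; since $\olnab$ is a map of locally free sheaves, a Nakayama/coherence argument promotes pointwise membership at every $s\in U$ to membership of germs, producing a holomorphic $\bar\sigma$. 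Subtracting $\nabla\sigma$ pushes $\eta$ one step deeper into the filtration, and one iterates. Because the filtration has finite length the induction terminates and assembles a holomorphic $\sigma\in\cF^0$ with $\nabla\sigma=\eta$; then $v-\sigma$ is a flat lift and $\nu$ is locally constant. The closedness $\nabla\eta=0$ is invoked at each stage to ensure that the corrected error remains $\nabla$-closed, so that the next graded obstruction is again one controlled by $\tilde\delta\nu$.

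The main obstacle is exactly this globalization step. The operator $\nabla\colon\cF^0\to\cF^{-1}\otimes\Omega^1_S$ is only $\bC$-linear, so its kernel and cokernel are not manifestly coherent and one cannot directly pass from fibrewise vanishing to vanishing of germs; the graded reformulation is what reduces each stage to an $\cO$-linear problem where Nakayama applies, though the loci where $\mathrm{rank}(\olnab)$ jumps require extra care and genuinely use that $\tilde\delta\nu_s$ vanishes at \emph{every} point rather than generically. A second delicate point is that the full invariant $\tilde\delta\nu$, and not merely its first graded shadow $\delta\nu$, is needed: the deeper graded obstructions do not all vanish automatically from $\nabla\eta=0$, and checking that they are captured by the vanishing of $\tilde\delta\nu_s$ across all graded steps is the technical heart of the argument.
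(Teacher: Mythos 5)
The paper itself offers no proof of this statement---it is quoted directly from \cite{Gri83}, Sec.\ 6(a)---so there is no in-paper argument to compare yours against; I will assess your proposal on its own terms. Your forward implication is correct and is the standard one. Your reverse implication begins well: the reduction of local constancy to the local solvability of $\nabla\sigma=\eta$ with $\sigma$ a holomorphic section of $\mathcal{F}^0$ is right, and you correctly locate the difficulty in passing from pointwise vanishing of the obstruction to an actual local solution.

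The mechanism you propose for that passage, however, does not work. For an $\mathcal{O}_S$-linear map of locally free sheaves $\phi\colon\mathcal{A}\to\mathcal{B}$, the condition that a section $\bar\eta$ of $\mathcal{B}$ satisfies $\bar\eta(s)\in\mathrm{Img}(\phi_s)$ fibrewise at \emph{every} $s$ does not imply that the germ of $\bar\eta$ lies in the image of $\phi$ on sections: take $\phi\colon\mathcal{O}_{\mathbb{C}}\to\mathcal{O}_{\mathbb{C}}$, $f\mapsto z^2f$, and $\bar\eta=z$; the value of $\bar\eta$ lies in the fibrewise image at every point (including $0$, where both are zero), yet $z=z^2f$ has no holomorphic solution near the origin. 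Nakayama converts fibrewise vanishing of a coherent cokernel into local vanishing of the whole cokernel; it says nothing about whether an individual section dies in the cokernel, and that is exactly what fails on the loci where $\mathrm{rank}(\overline{\nabla})$ jumps --- the very loci you flag as "requiring extra care" without supplying any. A second, independent gap: you assert that vanishing of the single class $\tilde{\delta}\nu_s$ yields, "by its very construction," the solvability of each successive graded equation $\overline{\nabla}\bar\sigma_k=\bar\eta_k$. That is precisely the content of Green's theorem relating $\tilde{\delta}\nu$ to the tower of higher invariants $\delta_1\nu,\delta_2\nu,\dots$ (see the Remark immediately following the theorem, citing \cite{Gre89}); it is a nontrivial result, not a definition, and your induction silently assumes it. As written, your argument establishes solvability only over the open set where the relevant graded maps have locally constant rank, and closing the gap requires genuinely new input --- Green's bookkeeping of the graded obstructions, or Griffiths' original argument, which exploits the real structure and the polarization rather than coherence alone.
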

\begin{remark}
It is clear that $\delta\nu=0$ is a necessary but in general not sufficient condition for $\tilde{\delta}\nu=0$. To fill the gap between them, in \cite{Gre89} Mark Green defined a sequence of infinitesimal invariants $\delta\nu=\delta_1\nu, \delta_2\nu,...,\delta_k\nu,...$ such that the vanishing of all of $\delta_k\nu$ holds if and only if $\tilde{\delta}\nu=0$. We will not use these invariants in this paper.
\end{remark}
For the rest of this paper, by the Griffiths infinitesimal invariant we shall mean the first Griffiths infinitesimal invariant.

% ---------------------------------------
\subsection{Infinitesimal invariant the Ceresa cycle normal function}
% ---------------------------------------

Fix a smooth genus $g$ algebraic curve $C$ and a marked point $p_0\in C$. The image of Abel--Jacobi map $C\rightarrow J(C), \ p \rightarrow \int_{p_0}^p$ in $J(C)$ gives an algebraic cycle $[C]-[C^-]\in Z^{g-1}(J(C))$. In \cite{Cer83} Ceresa showed for a general curve $C$,
\begin{equation}
    0\neq [C]-[C^-] \in \mathrm{Griff}^{g-1}(J(C)),
\end{equation}
where
\begin{equation}
    \mathrm{Griff}^{g-1}(J(C)):=\frac{\mathrm{Ch}^{g-1}_{\mathrm{hom}}(J(C))}{\mathrm{Ch}^{g-1}_{\mathrm{alg}}(J(C))}
\end{equation}
is the Griffiths group. The Griffiths Abel--Jacobi map \eqref{Def:GriffAJmap} gives:
\begin{equation}\label{eqn:griffithsAJmap}
    \mathrm{AJ}([C]-[C^-])\in J^{g-1}(J(C))\simeq \frac{F^2H^3(J(C))^{\vee}}{H_3(J(C), \bZ)}.
\end{equation}
The image depends on the base point $p_0\in C$, but its projection onto the primitive factor
\begin{equation}\label{eqn:primitivejacobian}
    PJ^{g-1}(J(C)):=\frac{F^2PH^3(J(C),C)^{\vee}}{H_3(J(C),\bZ)}
\end{equation}
does not (\cite{CP95}, Prop. 2.2.1). Therefore, associated to the universal curve of genus $g$: $\cM_{g,1}\rightarrow \cM_g$ is a $\bZ$-PVHS $\wedge^3_0\cV\rightarrow \cM_g$ with fibers being the primitive cohomology $PH^3(J(C),C)(2)$ and associated admissible normal function $\nu_c$ given by the Ceresa cycle. By the existence of the Kuranishi family, $\delta\nu_{c,C}$ is well-defined and called the Griffiths infinitesimal invariant of $C$.

More precisely, the family of intermediate Jacobians $\mathcal{PJ}\rightarrow \cM_g$ has fiber at $C$ given by \eqref{eqn:primitivejacobian}. Therefore $\delta\nu_{c,C}$ gives a linear function on $\mathbb{H}_b^{\vee}$ defined by equation \eqref{Eqn:1stGrifinfinvdualrepresentation}.

We introduce a result given by Collino-Pirola which is useful on computing the infinitesimal invariant. This gives a partial criteria on the degeneracy of $\nu_{c}$ at $C$. Let $\xi\in H^1(C,T_C)$ be a transformation with $\mathrm{dim}(W_\xi)\geq 2$, and $\sigma_1,\sigma_2\in H^{1,0}(C)$ are $2$ independent forms annihilated by $\xi$.
\begin{theorem}[\cite{CP95}, Sec. 2]\label{thm:collinopirolacoretheorem}
    $\delta\nu_{c,C}(\xi\otimes \sigma_1\wedge \sigma_2\wedge \overline{\omega})=0$ for any $\omega\in H^{1,0}(C)$ orthogonal to $\langle \sigma_1,\sigma_2\rangle$ if and only if $\xi$ is supported on the base locus of $\langle \sigma_1,\sigma_2\rangle$.
\end{theorem}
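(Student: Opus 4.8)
The plan is to reduce the Hodge-theoretic invariant $\delta\nu_{c,C}(\xi\otimes\sigma_1\wedge\sigma_2\wedge\overline{\omega})$ to an honest pairing of $\xi$ against a holomorphic quadratic differential on $C$, and then to convert the resulting vanishing condition into the geometry of the pencil $\langle\sigma_1,\sigma_2\rangle$ via the base-point-free pencil trick. The first step is to confirm that $\xi\otimes\sigma_1\wedge\sigma_2\wedge\overline{\omega}$ represents a class in the middle cohomology of the dual Koszul complex \eqref{eqn:dualkoszulcomplex}, i.e. that it lies in $\mathrm{Ker}\{H^1(C,T_C)\otimes H^{0,-1}\to H^{-1,0}\}$. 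This map is induced by $\overline{\nabla}_\xi$ acting as a derivation on $\wedge^3H^1(C)$, and $\overline{\nabla}_\xi$ kills $H^{0,1}(C)$ for degree reasons, so the Leibniz rule gives $\overline{\nabla}_\xi(\sigma_1\wedge\sigma_2\wedge\overline{\omega})=(\overline{\nabla}_\xi\sigma_1)\wedge\sigma_2\wedge\overline{\omega}+\sigma_1\wedge(\overline{\nabla}_\xi\sigma_2)\wedge\overline{\omega}$, both terms of which vanish because $\sigma_1,\sigma_2\in W_\xi=\mathrm{Ker}(\xi)$. Thus the pairing is well defined.

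The heart of the matter is to make the defining formula $\delta\nu_{c,C}(\xi\otimes w)=\langle\nabla_\xi v,w\rangle$ explicit for a local lift $v$ of the Ceresa normal function. Following the harmonic-volume computation behind \cite[Sec.~2]{CP95}, for each $i$ I would solve the $\overline{\partial}$-equation $\overline{\partial}\beta_i=\sigma_i\wedge\overline{\omega}$ for a $(1,0)$-form $\beta_i$; this is solvable precisely when the period $\int_C\sigma_i\wedge\overline{\omega}$ vanishes for $i=1,2$, which is exactly the hypothesis $\omega\perp\langle\sigma_1,\sigma_2\rangle$. Choosing the canonical (Green's operator) primitives makes $\beta_i$ depend linearly on $\omega$, and then $\Phi_\omega:=\sigma_1\beta_2-\sigma_2\beta_1$ satisfies $\overline{\partial}\Phi_\omega=\sigma_1\wedge(\sigma_2\wedge\overline{\omega})-\sigma_2\wedge(\sigma_1\wedge\overline{\omega})$, a combination of $(2,1)$-forms on the curve $C$ and hence identically zero. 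Therefore $\Phi_\omega\in H^0(C,\Omega_C^{\otimes 2})$, and since $\sigma_1,\sigma_2$ both vanish on the base divisor $D$ of $\langle\sigma_1,\sigma_2\rangle$ one has $\Phi_\omega\in H^0(C,\Omega_C^{\otimes 2}(-D))$. The upshot is a formula
\[
\delta\nu_{c,C}(\xi\otimes\sigma_1\wedge\sigma_2\wedge\overline{\omega})=\langle\xi,\Phi_\omega\rangle .
\]

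Both implications then follow from this formula together with one observation about $W_\xi$: unwinding the definition of $W_\xi$ as the radical of the quadratic form $\xi$, the condition $\langle\sigma_1,\sigma_2\rangle\subseteq W_\xi$ is equivalent to $\xi$ annihilating the whole multiplication image $M:=\sigma_1\cdot H^0(C,\Omega_C)+\sigma_2\cdot H^0(C,\Omega_C)\subseteq H^0(C,\Omega_C^{\otimes 2}(-D))$. For the reverse direction, if $\xi$ is supported on $D$ then by definition $\xi$ annihilates $H^0(C,\Omega_C^{\otimes 2}(-D))$, which contains every $\Phi_\omega$, so the invariant vanishes for all admissible $\omega$. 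For the forward direction, the hypothesis says $\xi\perp\Phi_\omega$ for all $\omega\perp\langle\sigma_1,\sigma_2\rangle$, while $\langle\sigma_1,\sigma_2\rangle\subseteq W_\xi$ gives $\xi\perp M$; it therefore suffices to prove that the $\Phi_\omega$ together with $M$ span all of $H^0(C,\Omega_C^{\otimes 2}(-D))$, for then $\xi$ annihilates $H^0(C,\Omega_C^{\otimes 2}(-D))$, i.e. $\xi$ is supported on $D$. This spanning statement is where the base-point-free pencil trick enters, applied to the pencil $\langle\sigma_1,\sigma_2\rangle$ on $\Omega_C(-D)$, controlling $\dim M$ and the transversality of $\{\Phi_\omega\}$ to $M$ through the cohomology of $\mathcal{O}_C(D)$.

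I expect the main obstacle to be the second step: establishing the explicit pairing formula for $\delta\nu_{c,C}$ and verifying that the relevant $\overline{\partial}$-primitives are governed precisely by the orthogonality condition $\omega\perp\langle\sigma_1,\sigma_2\rangle$. The spanning statement of the final step is the secondary difficulty; it requires a careful dimension count, and one must keep track of the cokernel of the multiplication map — equivalently of $H^1(C,\mathcal{O}_C(D))$ — to be sure that $\{\Phi_\omega\}+M$ genuinely exhausts $H^0(C,\Omega_C^{\otimes 2}(-D))$ rather than a proper subspace.
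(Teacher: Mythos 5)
The paper itself does not prove this statement: it is quoted from \cite{CP95} and the text explicitly defers to the adjunction-map construction there. Your proposal is, in outline, exactly that argument: your $\Phi_\omega=\sigma_1\beta_2-\sigma_2\beta_1$ is the Collino--Pirola adjoint form, well defined modulo $M=\sigma_1\cdot H^0(C,\Omega_C)+\sigma_2\cdot H^0(C,\Omega_C)$ (an ambiguity that is harmless here because $\sigma_1,\sigma_2$ lie in the radical of the quadric $\rho(\xi)$, so $\xi$ annihilates $M$), and the two implications are organized the right way. So the skeleton is correct and matches the cited source.

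As a proof, however, it is incomplete in precisely the two places you flag, and those two places are the entire mathematical content. First, the identity $\delta\nu_{c,C}(\xi\otimes\sigma_1\wedge\sigma_2\wedge\overline{\omega})=c\,\xi(\Phi_\omega)$ is the main theorem of \cite{CP95}; it requires producing an explicit (harmonic-volume type) lift of the Ceresa normal function and computing $\langle\nabla_\xi v,\cdot\rangle$ against it, and nothing in your write-up establishes it --- it is asserted by reference. Second, the forward implication needs the map $\omega\mapsto\Phi_\omega$ to surject onto $H^0(C,\Omega_C^{\otimes 2}(-D))/M$; the base-point-free pencil trick gives $\dim H^0(C,\Omega_C^{\otimes 2}(-D))/M=h^0(\Omega_C(-D))-2\leq g-2$, which is consistent with surjectivity of a map from the $(g-2)$-dimensional space $\langle\sigma_1,\sigma_2\rangle^{\perp}$ but does not prove it; this is a separate theorem in \cite{CP95}. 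One small correction along the way: $\overline{\partial}\Phi_\omega=0$ not because ``$(2,1)$-forms on a curve vanish'' --- the two terms are sections of $\Omega_C^{\otimes 2}\otimes\overline{\Omega}_C$, not forms --- but because $\sigma_1\otimes(\sigma_2\wedge\overline{\omega})=\sigma_2\otimes(\sigma_1\wedge\overline{\omega})$, as one sees from the commutativity $f_1f_2=f_2f_1$ in a local coordinate.
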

The proof relies on the construction of the adjunction map defined in \cite{CP95}, we will not cover it in details here.
% ---------------------------------------
\section{The Genus four case}\label{Sec04}
% ---------------------------------------

We will be focusing on the case $g=4$ in the rest of the paper. It is well-known that for a general smooth genus $4$ curve $C$, its canonical embedding into $\bP^{g-1}\simeq \bP^3$ is the intersection of a unique quadric $Q$ and a cubic $V$ well-defined up to cubics generated by $Q$.

To the family $\cM_4$ of smooth genus $4$ curves, let $\cM_{4,1}\rightarrow \cM_4$ be the universal curve and $\cV\rightarrow \cM_4$ be the associated $\bZ$-PVHS. There is a family of $0$-cycles defined up to a sign which leads to an admissible normal function $\nu_0$ underlying $\cV\rightarrow \cM_4[2]$ with the $2$-level structure dealing with ambiguity of the sign, and hence a well-defined infinitesimal invariant $\delta\nu_0$ for any $C\in \cM_4$. 
% ---------------------------------------
\subsection{The normal function $\nu_0$}
% ---------------------------------------

Since any quadric hypersurface $Q\subset \bP^3$ is isomorphic to $\bP^1\times \bP^1$, the two rulings $l_1, l_2$ of $Q$ cut the canonical curve $C=Q\cap V$ at the divisor
\begin{equation}\label{eqn:canonicalzerocycle}
    \pm D_0(C):=\sum_{1\leq i\leq 3}[p_i]-\sum_{1\leq i\leq 3}[q_i]
\end{equation}
which is homologous to zero and whose image in $\mathrm{CH}^1_{\mathrm{hom}}(C)$ is well-defined up to a sign. Therefore, $\nu_0$ is just the normal function associated to this family of cycles:
\begin{equation}
    \nu_0: \cM_4[2]\rightarrow \cJ(\cV)\simeq \frac{\cV}{\cF^0\cV+\cV_\bZ}.
\end{equation}

In \cite{Gri83} Griffiths studied the infinitesimal invariant $\delta\nu_0$ over $\cM_4$. Fix a non-hyperelliptic curve $C$, we have $T_CM_4\simeq H^1(C,T_C)$ via the Kodaira--Spencer map. Consider the set of rank-$1$ deformations of $C$:
\begin{center}
    $\cC_\Delta\rightarrow \Delta$, $\Delta:=\mathrm{Spec}\{\frac{\bC(\epsilon)}{\epsilon^2}\}$, $\cC_0\simeq C$,
\end{center}
such that the image of the Kodaira--Spencer class $\xi\in H^1(C,T_C)$ in $\mathrm{Hom}(H^{1,0}, H^{0,1})$ has rank $1$. Using the identification introduced in the beginning of Sec. 2, this set may be identified with the quadric hypersurface $Q\subset \bP^3$.
\begin{theorem}\cite[Sec. 6]{Gri83}\label{thm:deltanuzerovanishing}
    $\delta\nu_0|_\Delta$ vanishes if and only if $\xi\in C=Q\cap V$, equivalently if and only if $\xi$ is a Schiffer variation.
\end{theorem}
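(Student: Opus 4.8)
The plan is to pin down $\delta\nu_0|_\Delta$ as an explicit linear functional on $W_\xi=\mathrm{Ker}(\xi)\leq H^{1,0}(C)$ and to recognize its vanishing as a cubic condition on the point of $Q$ determined by $\xi$. First I would record, via the dual description \eqref{Eqn:1stGrifinfinvdualrepresentation}, that on the one-dimensional base $\Delta$ the relevant kernel is $\xi\otimes W_\xi$, so that $\delta\nu_0|_\Delta$ is the functional $\sigma\mapsto\langle\nabla_\xi v,\sigma\rangle$ on $W_\xi$, where $v$ is any local lift of $\nu_0$. Writing the rank $1$ class as $\xi=\overline{\omega}\cdot\overline{\omega}$ and letting $x=[\overline{\omega}]\in\bP(H^{0,1}(C))$ be the corresponding point of $Q$, the canonical embedding \eqref{eqn:canonicalembedding} identifies $W_\xi=\overline{\omega}^{\perp}$ with the space of linear forms on $\bP^3$ vanishing at $x$; when $x\in C$ this is exactly the space of holomorphic forms vanishing at $x$, recovering the Schiffer description.

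Next I would make $\langle\nabla_\xi v,\sigma\rangle$ explicit using the geometric origin of $\nu_0$. Since $\nu_0=\mathrm{AJ}(D_0)$ with $D_0=(p_1+p_2+p_3)-(q_1+q_2+q_3)$ the difference of the two trigonal divisors cut on $C$ by the rulings $l_1,l_2$ of $Q$, the derivative of the Abel--Jacobi map in the direction $\xi$ is governed by how the six intersection points move as the defining pair $(Q,V)$ deforms. The essential point is that on each ruling line $l_j\cong\bP^1$ the divisor is cut out by $V|_{l_j}$, so the first-order motion of the $p_i$ (resp.\ the $q_i$) is controlled by the cubic $V$; after pairing with $\sigma\in W_\xi$ and passing to the Koszul cohomology class (so that the ``motion of the form'' term is killed), one expects a residue expression whose only surviving ingredient is the value at $x$ of a cubic built from $V$. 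I would carry this out with the Jacobian/residue calculus for the complete intersection $C=Q\cap V$, reducing $\delta\nu_0(\xi\otimes\sigma)$ to a multiplication in the associated graded ring.

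With such a formula in hand the theorem would follow in both directions at once: the functional $\sigma\mapsto\delta\nu_0(\xi\otimes\sigma)$ on $W_\xi$ vanishes identically precisely when $V(x)=0$, and since a rank $1$ class already has $x\in Q$, this is equivalent to $x\in Q\cap V=C$, i.e.\ to $\xi$ being a Schiffer variation. In particular the Schiffer variations, being the Veronese images of points of $C$, are exactly the rank $1$ directions along which $\delta\nu_0$ dies, as asserted. A dimension check supports this shape of answer: the rank $1$ locus is the surface $Q\subset\bP^3$, the vanishing of $\delta\nu_0$ should cut out a divisor in $Q$, and $\{V=0\}\cap Q=C$ is exactly such a divisor.

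The main obstacle is the second step: correctly computing the first-order motion of the six intersection points. A deformation $\xi$ of $C$, even a Schiffer variation supported at a single point $p$, induces a genuinely \emph{global} first-order deformation of the unique quadric $Q$ through $C$, hence of its rulings, so that no intersection point is a priori fixed and the naive ``the points do not move'' argument fails. The real work is to show that these global contributions reorganize---after imposing $\sigma\in W_\xi$ and quotienting by the image of $H^{0,-1}_s$ in \eqref{Eqn:1stGrifinfinvdualrepresentation}---into the single cubic evaluation $V(x)$. Disentangling the induced deformation of $Q$ from that of $V$, and accounting for the $\mathrm{PGL}_4$-ambiguity in the pair $(Q,V)$ as well as for the ambiguity of $V$ modulo the ideal of $Q$, is precisely the delicate bookkeeping that the residue formalism is designed to absorb.
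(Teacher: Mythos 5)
The paper does not actually prove this statement: it is imported verbatim from \cite{Gri83}, and the only gloss given is the sentence following the theorem, namely that Griffiths showed $\delta\nu_0$ defines an element of $H^0(Q,\mathcal{O}_Q(3))$ whose vanishing locus is exactly the canonical curve $C$. So there is no in-paper argument to compare yours against; the relevant benchmark is Griffiths' computation itself. Your setup is correct and consistent with that result: on the one-dimensional base $\Delta$ the dual Koszul space \eqref{Eqn:1stGrifinfinvdualrepresentation} reduces to $\xi\otimes W_\xi$, the identification $W_\xi=\overline{\omega}^{\perp}$ with the linear forms vanishing at $x=[\overline{\omega}]\in Q$ is right, and the characterization you are aiming for --- vanishing of the functional on $W_{\xi}$ if and only if $V(x)=0$ --- is exactly the statement that $\delta\nu_0$ restricted to rank~$1$ directions is (up to scale) the cubic $V|_Q$, which is what the paper attributes to Griffiths.

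However, as a proof the proposal is incomplete at precisely the step that carries all the content. Everything after ``the essential point is that\dots'' is a statement of what you expect the residue calculus to yield, not a derivation: you never actually compute $\langle\nabla_{\xi}v,\sigma\rangle$ for the lift $v$ of $\mathrm{AJ}(D_0)$, never establish the factorization $\delta\nu_0(\xi_x\otimes\sigma)=V(x)\,\lambda(\sigma)$, and never check that the residual functional $\lambda$ is not identically zero on $W_{\xi_x}$ (without which the ``only if'' direction collapses). Note that both implications depend on this computation: even the ``Schiffer $\Rightarrow$ vanishing'' direction is not free here, since --- as you yourself observe in your last paragraph --- a Schiffer variation still moves the quadric $Q$, its rulings, and hence all six points of $D_0$ to first order, so one cannot argue that the cycle is infinitesimally fixed. (This is in contrast with the $\nu_c$ side of Theorem \ref{thm:mainthm1}, where the paper can quote Theorem \ref{thm:collinopirolacoretheorem} and the Schiffer direction does follow formally from base-point considerations.) In short: the skeleton and the expected answer are right, but the theorem is exactly as hard as the deferred residue computation, and the proposal as written establishes neither direction.
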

Indeed, he showed that $\delta\nu_0$ defines an element in $H^0(Q, \mathcal{O}_Q(3))$ whose vanishing locus is exactly the canonical curve $C$.
% ---------------------------------------
\subsection{The Ceresa normal function $\nu_c$}
% ---------------------------------------
In this subsection we consider the Ceresa normal function $\nu_c$ over $\cM_4$ and its infinitesimal invariant $\delta\nu_c$. In particular, we show $\delta\nu_c$ and $\delta\nu_0$ have the same vanishing locus on rank-$1$ deformations. This will imply Theorem \ref{thm:mainthm1} and Corollary \ref{cor:maincorr1}.

Let $\cM_4\rightarrow \mathcal{A}_4$ be the period map where $\mathcal{A}_4$ is the moduli space of principally polarized abelian varieties of dimension $4$. The image of $\cM_4$ is known as the Jacobian divisor of $\mathcal{A}_4$. Let $C\in \cM_4$ and $T_C\mathcal{A}_4\simeq \mathrm{Sym}^2(H^{0,1}(C))$. Denote $\{P^{p,q}, \ p+q=-1\}$ as the Hodge decomposition of $PH^3(J(C),C)(2)$. We have the commutative diagram:
\begin{equation}\label{eqn:doublekoszulcomplexgenus4}
\begin{tikzcd}
\wedge^2T_C\cM_4\otimes P^{1,-2} \arrow[d] \arrow[r] & T_C\cM_4\otimes P^{0,-1} \arrow[d] \arrow[r]& P^{-1,0} \arrow[d, "="] \\
\wedge^2T_C\mathcal{A}_4\otimes P^{1,-2} \arrow[r] & T_C\mathcal{A}_4\otimes P^{0,-1} \arrow[r] & P^{-1,0}.
\end{tikzcd}
\end{equation}
Following the computation in \cite[Sec. 7]{No93}, the bottom row of \eqref{eqn:doublekoszulcomplexgenus4} is exact, while the top row has the cohomology group $\mathbb{H}_C^{\vee}$ defined in \eqref{Eqn:1stGrifinfinvdualrepresentation} (We regard the curve $C$ as a point in $\cM_4$).

Let $\mathcal{K}:=\mathrm{Ker}\{T_C\mathcal{A}_4\otimes P^{1,-2}\rightarrow P^{0,-1}\}$. It follows that $\mathcal{K}\simeq \mathrm{Sym}^3(H^{0,1}(C))$. Choose an orthonormal basis $\{\omega_i, \ 1\leq i\leq 4\}$ of $H^{1,0}(C)$. The equivalence is given on decomposible tensors by
\begin{equation}\label{eqn:symspacetokernel}
    \overline{\omega_1}\cdot\overline{\omega_2}\cdot\overline{\omega_3}\in \mathrm{Sym}^3(H^{0,1}(C)) \rightarrow \sum_{\sigma\in S_3} \overline{\omega_{\sigma(1)}}\cdot\overline{\omega_{\sigma(2)}}\otimes \star\omega_{\sigma(3)}\in \mathcal{K}
\end{equation}
where $\star \omega$ is the Hodge-star operator on $H^{1,0}(C)$ with the chosen orthonormal basis. Let $\mathcal{K}^\circ\subset \mathcal{K}$ be the subspace $\mathcal{K}\cap (T_C\cM_4\otimes P^{1,-2})$, hence $\mathcal{K}^\circ$ may be regarded as a subspace of $\mathrm{Sym}^3(H^{0,1}(C))$.

Take any $0\neq\eta\in T_C\mathcal{A}_4-T_C\cM_4$ which gives a decomposition $T_C\mathcal{A}_4=T_C\cM_4\oplus \langle\eta\rangle$. There is a well-defined map $\rho_\eta: \mathcal{K}^\circ\rightarrow \mathbb{H}_C^{\vee}$ given by:
\begin{equation}\label{eqn:maptothekoszulcohmspace}
    \sum\xi_i\otimes \omega_i\in \mathcal{K}^\circ\xrightarrow{\rho_\eta}\sum\xi_i\otimes \nabla_\eta\omega_i\in \mathbb{H}_C^{\vee}
\end{equation}
with $\xi_i\in T_C\cM_4$. Since different choices of $\eta$ are differed by a constant multiple or an element in $T_C\cM_4$, the map $\rho_\eta$ with image in $\bP\mathbb{H}_C^{\vee}$ is canonically defined. By pulling back we may realize $\delta\nu_{c,C}$ as a linear functional on $\bP\mathcal{K}^\circ\subset \bP\mathrm{Sym}^3(H^{0,1}(C))$.

Note that for any rank-$1$ transformation $\xi=\overline{\omega}\cdot\overline{\omega}$, take $\{\sigma_1,\sigma_2,\sigma_3\}$ as an orthonormal basis of $W_\xi$, the maps \eqref{eqn:symspacetokernel} and \eqref{eqn:maptothekoszulcohmspace} are read as:
\begin{equation}\label{eqn:rank1deformationsymspace}
    \overline{\omega}\cdot\overline{\omega}\cdot\overline{\omega} \rightarrow \overline{\omega}\cdot\overline{\omega}\otimes \sigma_1\wedge\sigma_2\wedge\sigma_3\xrightarrow{\rho_\eta}\sum\overline{\omega}\cdot\overline{\omega}\otimes \nabla_\eta(\sigma_1\wedge\sigma_2\wedge\sigma_3).
\end{equation}
Since rank-$1$ deformations of $C$ are exactly given by points on the quadric surface $Q$ defined by the kernel of $\rho^\vee$ (see \eqref{eqn:dualnoethermap}), $\delta\nu_{c,C}$ gives an element in $H^0(Q,O_Q(3))$. The following proposition and Theorem \ref{thm:deltanuzerovanishing} imply the main Theorem \ref{thm:mainthm1}.
\begin{prop}
    For a general genus $4$ curve $C$, $\delta\nu_{c,C}$ vanishes on the image of a rank-$1$ deformation $\xi$ under \eqref{eqn:rank1deformationsymspace} if and only if $\xi=\xi_p$ is a Schiffer variation.
\end{prop}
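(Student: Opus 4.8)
The plan is to reproduce, for $\delta\nu_c$, the structure that Griffiths' Theorem~\ref{thm:deltanuzerovanishing} exhibits for $\delta\nu_0$: namely to show that $\delta\nu_{c,C}$, restricted to rank~$1$ deformations, is a cubic section of $\mathcal{O}_Q(3)$ cutting out exactly the canonical curve $C$. First I would use that $\delta\nu_{c,C}$ is a linear functional on $\mathcal{K}\simeq\mathrm{Sym}^3(H^{0,1}(C))$ and that, by \eqref{eqn:rank1deformationsymspace}, a rank~$1$ deformation $\xi=\overline{d\omega}\cdot\overline{d\omega}$ is sent to the perfect cube $\overline{d\omega}\cdot\overline{d\omega}\cdot\overline{d\omega}$. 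Composing with the cubic Veronese $\bP(H^{0,1}(C))\to\bP\,\mathrm{Sym}^3(H^{0,1}(C))$ then realizes $\xi\mapsto\delta\nu_{c,C}(\xi)$ as a homogeneous cubic form $f$ on $\bP(H^{0,1}(C))\simeq\bP^3$. Since the rank~$1$ deformations that are genuine first-order deformations are precisely the Veronese points lying on the quadric $Q$ (the locus cut out from $\bP(\mathrm{Sym}^2H^{0,1}(C))$ by the defining relation of $\bP(H^1(C,T_C))$), the object to study is the restriction $f|_Q\in H^0(Q,\mathcal{O}_Q(3))$.

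Next I would establish the ``if'' direction, that $f$ vanishes on every Schiffer variation. For $\xi=\xi_p$ write $\star\omega=\sigma_1\wedge\sigma_2\wedge\sigma_3$ with $\{\sigma_i\}$ an orthonormal basis of $W_{\xi_p}$; by the Proposition following Definition~\ref{Def:SchifferVar} all three $\sigma_i$ vanish at $p$. The image \eqref{eqn:rank1deformationsymspace} is $\xi_p\otimes\nabla_\eta(\sigma_1\wedge\sigma_2\wedge\sigma_3)$, and the Leibniz rule gives $\nabla_\eta(\sigma_1\wedge\sigma_2\wedge\sigma_3)=\sum_i\sigma_1\wedge\cdots\nabla_\eta\sigma_i\cdots\wedge\sigma_3$. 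Each summand has the shape $\sigma_j\wedge\sigma_k\wedge\overline{\tau}$ to which Collino--Pirola's Theorem~\ref{thm:collinopirolacoretheorem} applies: because $p$ is a common base point of every pencil $\langle\sigma_j,\sigma_k\rangle\subset W_{\xi_p}$ and $\xi_p$ is supported on $[p]$, the variation $\xi_p$ is supported on the base locus of each such pencil, so Theorem~\ref{thm:collinopirolacoretheorem} forces $\delta\nu_{c,C}(\xi_p\otimes\sigma_j\wedge\sigma_k\wedge\overline{\tau})=0$ for every $\tau$ orthogonal to $\langle\sigma_j,\sigma_k\rangle$. Summing over $i$ yields $f(\xi_p)=0$.

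For the ``only if'' direction I would invoke a dimension count on $Q\simeq\bP^1\times\bP^1$. The canonical curve is a member of $|\mathcal{O}_Q(3)|$, so $\mathcal{O}_Q(3)\otimes\mathcal{I}_{C/Q}\simeq\mathcal{O}_Q$ and $H^0(Q,\mathcal{O}_Q(3)\otimes\mathcal{I}_{C/Q})$ is one-dimensional, spanned by $V|_Q$. Since $f|_Q$ is a cubic vanishing along $C$ by the previous step, it follows that $f|_Q=c\cdot V|_Q$ for some scalar $c$. It then remains to check $c\neq0$ for general $C$: this I would obtain from the converse half of Theorem~\ref{thm:collinopirolacoretheorem} by exhibiting a single rank~$1$ deformation $\xi\in Q$ whose kernel $W_\xi$ has no base point, so that $\xi$ is not supported on the base locus of some pencil in $W_\xi$ and hence $\delta\nu_{c,C}(\xi)\neq0$. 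Granting $c\neq0$, the zero locus of $f|_Q$ is exactly $C$, i.e.\ exactly the Schiffer variations, which gives both implications.

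The step I expect to be the main obstacle is the ``if'' direction, since Theorem~\ref{thm:collinopirolacoretheorem} is stated for a single primitive tensor $\xi\otimes\sigma_1\wedge\sigma_2\wedge\overline{\omega}$ with $\omega$ orthogonal to $\langle\sigma_1,\sigma_2\rangle$, whereas the Leibniz expansion produces factors $\nabla_\eta\sigma_i$ that need not be orthogonal to $\langle\sigma_j,\sigma_k\rangle$. The resolution is that $\nabla_\eta(\sigma_1\wedge\sigma_2\wedge\sigma_3)$ already lies in the primitive piece $P^{0,-1}$, and every decomposable primitive tensor is automatically of Collino--Pirola type; concretely, splitting each $\nabla_\eta\sigma_i$ into its component orthogonal to $\langle\sigma_j,\sigma_k\rangle$ and a remainder, the orthogonal parts are genuine Collino--Pirola tensors annihilated by $\delta\nu_{c,C}(\xi_p\otimes-)$, while the remainders are non-primitive and assemble, after summing over $i$, into the kernel of the projection onto $P^{0,-1}$. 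Making this cancellation precise, and verifying that the resulting cubic $f$ is independent of the auxiliary direction $\eta$ up to scalar, is the technical heart of the argument.
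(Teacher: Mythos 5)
Your overall architecture coincides with the paper's: both directions rest on Collino--Pirola's criterion (Theorem \ref{thm:collinopirolacoretheorem}), the vanishing at a Schiffer variation $\xi_p$ comes from the fact that $p$ is a base point of $W_{\xi_p}=\langle\sigma_1,\sigma_2,\sigma_3\rangle$, and the ``only if'' direction is closed by viewing $\delta\nu_{c,C}$ on rank~$1$ deformations as a section of $\mathcal{O}_Q(3)$ vanishing on $C$, hence proportional to $V|_Q$ --- a step the paper leaves implicit (it appears only in the introduction as realizing $\delta\nu_c$ as a hyperplane cutting off the canonical curve) but which you spell out correctly. The one genuine divergence is the non-vanishing witness. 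You propose a generic rank~$1$ deformation $\xi$ together with a base-point-free pencil in $W_\xi$; the paper instead imports Griffiths' configuration from \cite[Sec.~6(d)]{Gri83}: a curve and a point $p$ at which the two rulings of $Q$ meet $C$ to orders $3$ and $2$, a form $\omega_0$ vanishing to order $3$ at $p$, a pencil $\langle\omega_1,\omega_2\rangle$ with base locus exactly $3[p]$, and $q\in\omega_1\cap\omega_2\setminus\omega_0$, so that $\xi_q$ visibly fails to annihilate $H^0(C,\Omega_C^{\otimes 2}(-3p))$ because it does not kill $\omega_0$. Be aware that the difficulty you correctly flag for the ``if'' direction --- Theorem \ref{thm:collinopirolacoretheorem} controls $\delta\nu_{c,C}(\xi\otimes\sigma_j\wedge\sigma_k\wedge\overline{\omega})$ only for $\omega$ orthogonal to the pencil, whereas \eqref{eqn:rank1deformationsymspace} evaluates $\delta\nu_{c,C}$ on the specific combination $\xi\otimes\nabla_\eta(\sigma_1\wedge\sigma_2\wedge\sigma_3)$ --- bites at least as hard in your non-vanishing step: that some Collino--Pirola evaluation is nonzero does not by itself show that this particular combination is nonzero, and your proposal only discusses the decomposition issue for the vanishing half. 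The paper's special configuration is chosen so that the term Collino--Pirola controls is the one that survives; if you keep the generic-pencil route you must carry out the same bookkeeping there before the scalar $c$ can be declared nonzero.
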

\begin{proof}
    We first show $\delta\nu_{c,C}$ does not vanish identically on the Veronese image of $\bP(H^{0,1}(C))$. It suffices to find one curve $C$ and one rank-$1$ deformation $\xi\in  H^1(C,T_C)$ such that $\delta\nu_{c,C}$ does not vanish on the image of $\xi$ under \eqref{eqn:rank1deformationsymspace}.

    We consider the same example as \cite[Sec. 6(d)]{Gri83}. Take $C$ and $p\in C$ satisfy that the two rulings $l_1,l_2$ of $Q$ at $p$ have triple and double tangent point with $C$ at $p$ respectively. The plane generated by $l_1,l_2$ in $\bP(H^{0,1}(C))$ represents a holomorphic form $\omega_0\in H^0(C,\Omega_C(3p))$. Take another two forms $\omega_1,\omega_2$ such that $\omega_0\cap \omega_i=l_i$, and $\omega_0,\omega_1,\omega_2$ are mutually orthogonal. Take $q\in \omega_1\cap\omega_2-\omega_0$. The base locus of $\langle \omega_1,\omega_2\rangle$ is $3[p]$ while $\xi_q$ is not supported on $3[p]$ as it does not annihilate $\omega_0$. Theorem \ref{thm:collinopirolacoretheorem} implies $\delta\nu_{c,C}(\omega_1\wedge\omega_2\wedge\overline{\zeta})$ does not vanish for all $\zeta\in H^{1,0}(C)$.

    It remains to show for any $p\in C$ and the corresponding Schiffer variation $\xi_p\in \bP H^1(C,T_C)$, $\delta\nu_{c,C}$ vanishes on the image of $\xi_p$ under \eqref{eqn:rank1deformationsymspace}, but this follows from the fact $\delta\nu_{c,C}(\xi_p\otimes\sigma_i\wedge\sigma_j\wedge\nabla_\eta\sigma_k)$ vanishes as $p$ is a base point of $\langle\sigma_1,\sigma_2,\sigma_3\rangle$.
\end{proof}
% ---------------------------------------
\section{A family of special trigonal curves}\label{Sec05}
% ---------------------------------------

Denote $\bP$ as the weighted projective plane $\bP[1:1:2]$. Consider the family of smooth projective curves
\begin{equation}\label{eqn:aspecialfamilygenus4curves}
    \cC:=\{(b_1,b_2,b_3),[X:Z:Y])\in B \times \bP \ | \ Y^3=(X^3-Z^3)\prod_{1\leq j\leq 3}(X-b_jZ)\},
\end{equation}
where
\begin{equation}\label{eqn:boundarydivisors}
    B:= (\bP^1)^3 - \cup_{i,j}\{b_i=b_j\} - \cup_{i}\{b_i^3=1\}.
\end{equation}
Hence $B$ is isomorphic to $\cM_{0,6}$, the moduli space of $6$ distinct marked points on $\bP^1$ up to projective equivalence. For $b=(b_1,b_2,b_3)\in B$, the curve $C_b$ has affine equation
\begin{equation}
    C_b: \overline{\{(x,y)\in \bC^2 \ | \ y^3=(x^3-1)\prod_{1\leq j\leq 3}(x-b_j) \}}\subset \bP.
\end{equation}
It is clear $C_b$ admits a $3$ to $1$ covering map to $\bP^1$ branched along $6$ double points. By Riemann-Hurwitz, it has genus $4$. Moreover it admits a degree $3$-automorphism $\rho: y\xrightarrow{} e^{\frac{2\pi i}{3}}y$. 

\begin{remark}
    By \cite{Loo24}, the family $\cC\rightarrow B$ parametrizes curves admitting a $g^1_3$ whose discriminant divisor on $\bP^1$ is the sum of $6$ (distinct) double points.
\end{remark}

To show the main Theorem \ref{thm:mainthm2}, we independently check the behaviors of $\delta\nu_0$ and $\delta\nu_c$ over the family $\cC\rightarrow B$.
%---------------------------------
\subsection{Rational triviality of the cycle $D_0$}
%---------------------------------
For $b\in B$ there is a canonical basis of $H^0(C_b, \Omega^1_{C_b})$ compatible with the eigenspace decomposition of the induced action from $\rho$:
\begin{equation}
    H^0(C_b, \Omega^1_{C_b})=\langle\frac{dx}{y}\rangle_{e^{\frac{4\pi i}{3}}} \oplus \langle\frac{dx}{y^2}, \frac{xdx}{y^2}, \frac{x^2dx}{y^2}\rangle_{e^{\frac{2\pi i}{3}}}=:\langle \omega_0\rangle\oplus \langle \omega_1, \omega_2, \omega_3\rangle.
\end{equation}
Take this ordered basis and consider the corresponding canonical embedding $C_b\xrightarrow{\phi} \bP(H^{0,1}(C_b))\simeq \bP^3$, the canonical image $\phi(C_b)$ lies on the quadric surface
\begin{equation}
 Q:=\{[z_0:z_1:z_2:z_3]\in \bP^3 \ | \ z_2^2-z_1z_3=0\}.
\end{equation}
The two rulings are given by the lines:
\begin{eqnarray}
    &L(t_1):=\{z_2+z_1=t_1(z_3-z_1)\}\cap \{z_1=t_1(z_2-z_1)\}\\
    &L(t_2):=\{z_2+z_1=t_2z_1\}\cap \{z_3-z_1=t_2(z_2-z_1)\}
\end{eqnarray}
for $t_i\in \bP^1$. Take $t_1, t_2$ such that  
\begin{equation}
    \frac{1}{t_1}+1=t_2-1,
\end{equation}
it follows that $[L(t_1)\cap C_b]=[L(t_2)\cap C_b]$. Hence for any $b\in B$, $t_1, t_2\in \bP^1$, the cycle $D_0(C_b)=[L(t_1)\cap C_b]-[L(t_2)\cap C_b]$ on $C_b$ is rationally trivial\footnote{This also implies $\nu_0$ can be defined over $\cC\rightarrow B$ without adding any level structures.}.

%-------------------------------
\subsection{The Ceresa cycle}
%-------------------------------
To study the Ceresa normal function of the family, we need to compute the explicit Kodaira--Spencer image of $T_bB$. Clearly
\begin{equation}
    T_bB=\{a_1\partial_1+a_2\partial_2+a_3\partial_3, \ \partial_j:=\frac{\partial}{\partial b_j}, \ a_j\in \bC\}\simeq \bC^3.
\end{equation}
To find the Kodaira--Spencer image of $\partial_j$, we notice that 
\begin{eqnarray}
    &\partial_j\omega_0=\frac{1}{3(x-b_j)}\omega_0,\\
    &\partial_j\omega_k=\frac{2}{3(x-b_j)}\omega_k, \ 1\leq k\leq 3.
\end{eqnarray}
It follows that
\begin{eqnarray}
    &\partial_j(\omega_2-b_j\omega_1)=\frac{2}{3}\omega_1,\\
    &\partial_j(\omega_3-b_j\omega_2)=\frac{2}{3}\omega_2
\end{eqnarray}
are both holomorphic, therefore $\langle\omega_2-b_j\omega_1, \omega_3-b_j\omega_2 \rangle$ is annihilated by the Kodaira--Spencer class of $\partial_j$. 

Moreover, it is clear for any $\xi\in T_bB$,
\begin{equation}
    \nabla_\xi H^0(C_b, \Omega^1_{C_b})_{e^{\frac{2\pi i}{3}}}\subset H^1(C_b, \mathcal{O}_{C_b})_{e^{\frac{2\pi i}{3}}}=\langle\overline{\omega_0}\rangle
\end{equation}
and similarly
\begin{equation}
    \nabla_\xi H^0(C_b, \Omega^1_{C_b})_{e^{-\frac{2\pi i}{3}}}=\nabla_\xi\langle\omega_0\rangle\subset H^1(C_b, \mathcal{O}_{C_b})_{e^{-\frac{2\pi i}{3}}}.
\end{equation}
As a consequence we have the following lemma.
\begin{lemma}
    For any $0\neq \xi\in T_bB$, its Kodaira--Spencer class in $H^1(C_b,T_{C_b})$ has rank $2$, and the two dimensional kernel $W_\xi\leq H^0(C_b, \Omega^1_{C_b})_{e^{\frac{2\pi i}{3}}}$.
\end{lemma}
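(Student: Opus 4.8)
The plan is to compute the Kodaira--Spencer map $\nabla_\xi\colon H^{1,0}\to H^{0,1}$ block-by-block using the $\rho$-symmetry, and then read off its rank and kernel. Write $E:=\langle\omega_1,\omega_2,\omega_3\rangle$ and $\overline E:=\langle\overline{\omega_1},\overline{\omega_2},\overline{\omega_3}\rangle$. Since $\rho$ acts fiberwise on $\cC\to B$, the Gauss--Manin connection is $\rho$-equivariant and therefore preserves the Hodge eigenspaces; as $\langle\omega_0\rangle$ and $E$ are the $\rho$-eigenspaces of $H^{1,0}$, this forces $\nabla_\xi$ to carry $\langle\omega_0\rangle$ into $\overline E$ and to carry $E$ into $\langle\overline{\omega_0}\rangle$. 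Because $\overline E$ and $\langle\overline{\omega_0}\rangle$ are complementary in $H^{0,1}$, the image of $\nabla_\xi$ is the direct sum of $\langle\nabla_\xi\omega_0\rangle$ and $\nabla_\xi(E)$, so $\mathrm{rank}(\nabla_\xi)$ is the number of these two summands that are nonzero. Moreover, by Noether's theorem (Section~\ref{Sec02}) $\nabla_\xi$ is an element of $\mathrm{Sym}^2H^{0,1}$; combined with $\rho$-invariance this forces it to be a sum of cross terms $\sum_{k=1}^{3}c_k(\xi)\,\overline{\omega_0}\,\overline{\omega_k}$, so a single vector $(c_1,c_2,c_3)(\xi)$ simultaneously governs $\nabla_\xi\omega_0=\sum_k c_k\overline{\omega_k}$ and the functional $\nabla_\xi|_E$. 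In particular $\nabla_\xi\omega_0=0$ if and only if $\nabla_\xi|_E=0$, whence $\mathrm{rank}(\nabla_\xi)\in\{0,2\}$, and whenever the rank is $2$ the kernel of $\nabla_\xi$ is exactly $\ker(\nabla_\xi|_E)\subset E$, which is two-dimensional.

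It remains to show $\nabla_\xi|_E\neq0$ for every $0\neq\xi=a_1\partial_1+a_2\partial_2+a_3\partial_3$, which by the dichotomy above also yields rank exactly $2$. For a single $\partial_j$ the explicit first-order computation exhibits $\omega_2-u_j\omega_1$ and $\omega_3-u_j\omega_2$ in $\ker(\nabla_{\partial_j})$; as these span a $2$-plane in $E$, the functional $\nabla_{\partial_j}|_E$ vanishes on it, so in the coordinates dual to $(\omega_1,\omega_2,\omega_3)$ it is a multiple of the unique covector annihilating $(-u_j,1,0)$ and $(0,-u_j,1)$, namely the Vandermonde covector $(1,u_j,u_j^2)$. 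This multiple is nonzero because $\nabla_{\partial_j}$ itself is nonzero: moving the single branch point $u_j$ is a nontrivial first-order deformation of $C_u$, as a short residue computation against the $\omega_k$ shows that the Hodge class of the second-kind differential $\omega_0/(x-u_j)$ does not vanish in $H^{0,1}$. By the rank dichotomy, $\nabla_{\partial_j}$ then has rank $2$, so $\nabla_{\partial_j}|_E\neq0$ and we may write $\nabla_{\partial_j}|_E=\mu_j(1,u_j,u_j^2)$ with $\mu_j\neq0$.

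Finally, $\nabla_\xi|_E=\sum_j a_j\,\nabla_{\partial_j}|_E=\sum_j a_j\mu_j(1,u_j,u_j^2)$, and the three Vandermonde covectors are linearly independent precisely because $u_1,u_2,u_3$ are pairwise distinct on $B$. Since each $\mu_j\neq0$, the assignment $\xi\mapsto\nabla_\xi|_E$ is injective, so $\nabla_\xi|_E\neq0$ for all $\xi\neq0$; by the first paragraph $\nabla_\xi$ then has rank $2$ with two-dimensional kernel $W_\xi\subset E=\langle\omega_1,\omega_2,\omega_3\rangle$. I expect the only step requiring genuine care to be the nonvanishing of $\nabla_{\partial_j}\omega_0$ (equivalently $\mu_j\neq0$): this is the sole non-formal, Hodge-theoretic input, and it is exactly what prevents the Vandermonde rows from collapsing, after which the conclusion is linear algebra driven by the distinctness of the branch points.
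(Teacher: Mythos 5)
Your proof is correct and follows essentially the same route as the paper: $\rho$-equivariance of the Gauss--Manin connection gives the block structure $\langle\omega_0\rangle\to\langle\overline{\omega_1},\overline{\omega_2},\overline{\omega_3}\rangle$ and $\langle\omega_1,\omega_2,\omega_3\rangle\to\langle\overline{\omega_0}\rangle$, the explicit kernel elements $\omega_2-u_j\omega_1$, $\omega_3-u_j\omega_2$ pin down $\nabla_{\partial_j}$ on $\langle\omega_1,\omega_2,\omega_3\rangle$ up to the scalar $\mu_j$, and distinctness of the $u_j$ makes the Vandermonde covectors independent. The one step you assert rather than carry out --- the nonvanishing of $\nabla_{\partial_j}\omega_0$, equivalently $\mu_j\neq 0$ --- is exactly what the paper supplies in Lemma \ref{Lemma:KSmapclasscompute} via the residue computation $\int_{C_u}\omega_1\wedge(\partial_j\omega_0)=6\pi i/Q'(u_j)\neq 0$; your symmetry/dichotomy observation (rank is $0$ or $2$) lets you get by with this single pairing where the paper records all four, and your conclusion correctly places $W_\xi$ inside $\langle\omega_1,\omega_2,\omega_3\rangle$, i.e.\ the $e^{\frac{2\pi i}{3}}$-eigenspace (the eigenvalue appearing in the lemma's statement is evidently a typo, since the $e^{\frac{4\pi i}{3}}$-eigenspace of $H^0(C_u,\Omega^1_{C_u})$ is only one-dimensional).
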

We compute the explicit Kodaira--Spencer classes of $\partial_j$. For convenience denote 
\begin{equation}
    Q(x)=Q_b(x):=(x^3-1)(x-b_1)(x-b_2)(x-b_3).
\end{equation}
Note that though $\partial_j\omega_k$ is a $(1,0)$-form defined over $C_b-\{y=0\}$, it has zero residue around every branched point $p\in\{y=0\}$. By the Gysin sequence
\begin{equation}
    0\rightarrow H^1(C_b)\rightarrow H^1(C_b-\{p\})\xrightarrow{\mathrm{Res}}H^2(C_b,C_b-\{p\})\simeq H^0(\{p\})=\bC,
\end{equation}
there is a $1$-form defined over $C_b$ representing the cohomology class of $\partial_j\omega_k$ which we denote as $[\partial_j\omega_k]$. We compute the component of $[\partial_j\omega_k]$ in $H^{0,1}(C_b)$. 
\begin{lemma}\label{Lemma:KSmapclasscompute}
The following equations hold:
\begin{eqnarray}\label{eqn:intpairing}
    &\int_{C_b}\omega_0\wedge [\partial_j\omega_1]=\frac{6\pi i}{Q'(b_j)},\\
    &\int_{C_b}\omega_1\wedge [\partial_j\omega_0]=\frac{6\pi i}{Q'(b_j)},\\
    &\int_{C_b}\omega_0\wedge [\partial_j\omega_0]=0,\\
    &\int_{C_b}\omega_1\wedge [\partial_j\omega_k]= 0, \ 1\leq k\leq 3.
\end{eqnarray}
\end{lemma}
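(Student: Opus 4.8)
The four integrals are the cup‑product pairings $H^{1,0}(C_u)\otimes H^{0,1}(C_u)\to H^2(C_u,\bC)\cong\bC$, where the second factor is the Kodaira--Spencer image $\partial_j\omega_k\in H^{0,1}(C_u)$, represented as above by the meromorphic differential obtained by differentiating the coefficients of $\omega_k$ in $u_j$ (this meromorphic form realizes $\nabla_{\partial_j}\omega_k$). The plan is to split the computation into two parts: the two vanishing identities follow from the equivariance of the pairing under $\rho$, while the two nonzero values are extracted by a residue computation localized at the branch point over $x=u_j$.

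First I would record the eigenvalue bookkeeping. Since $\rho$ acts fiberwise on $\cC\to B$ and therefore commutes with the Gauss--Manin connection, $\partial_j\omega_k=\nabla_{\partial_j}\omega_k$ has the same $\rho^\ast$-eigenvalue as $\omega_k$; thus $\partial_j\omega_0$ lies in the $e^{4\pi i/3}$-eigenspace of $H^{0,1}$ and $\partial_j\omega_k$ ($1\le k\le 3$) in the $e^{2\pi i/3}$-eigenspace. The cup product is $\rho$-invariant, so it pairs an eigenvalue $\lambda$ nontrivially only with $\lambda^{-1}$. As $e^{4\pi i/3}\cdot e^{4\pi i/3}=e^{2\pi i/3}\neq 1$ and $e^{2\pi i/3}\cdot e^{2\pi i/3}=e^{4\pi i/3}\neq 1$, the pairings $\int_{C_u}\omega_0\wedge\partial_j\omega_0$ and $\int_{C_u}\omega_1\wedge\partial_j\omega_k$ vanish identically, giving the third and fourth identities with no further work.

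For the remaining two values I would first note that $(a,b)\mapsto\int_{C_u}\omega_a\wedge\partial_j\omega_b$ is symmetric in $a,b$ (this is the symmetry of the second fundamental form, equivalently that the Kodaira--Spencer class lands in $\mathrm{Sym}^2 H^{0,1}$ under Noether's map \eqref{eqn:Noethermap}); hence the first two integrals are equal and it suffices to compute one. Since $\omega_0\wedge\partial_j\omega_1$ is a product of two $(1,0)$-forms and so vanishes pointwise, the integral must be read as the cup product of the holomorphic form $\omega_0$ with the de Rham class of the second-kind differential $\beta:=\partial_j\omega_1$, computed by the residue formula $\int_{C_u}\alpha\wedge[\beta]=-2\pi i\sum_p\mathrm{Res}_p(F_\beta\,\alpha)$, where $F_\beta$ is a local meromorphic primitive of $\beta$. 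The key geometric input is that $\beta$ is of the second kind with a single pole, at the point $P_j$ over $x=u_j$: the cover is totally ramified there, and in the local coordinate $t=y$ one has $x-u_j=Q'(u_j)^{-1}y^3+O(y^6)$ and $dx=3\,Q'(u_j)^{-1}y^2\,dy+\cdots$. Substituting gives $\beta=2y^{-3}\,dy+(\text{holomorphic})$, so $F_\beta=-y^{-2}+(\text{holomorphic})$, while $\omega_0=3\,Q'(u_j)^{-1}y\,dy+\cdots$; thus $F_\beta\,\omega_0$ has residue $-3/Q'(u_j)$ at $P_j$, and the formula yields $\int_{C_u}\omega_0\wedge\partial_j\omega_1=6\pi i/Q'(u_j)$ once the orientation convention is fixed.

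The main obstacle is the residue bookkeeping for the two nonvanishing cases, with two aspects. First, one must verify that $\beta$ has no other poles — a short check at the remaining branch points (where $1/(x-u_j)$ is regular) and at the three unramified points over $x=\infty$ (where $\omega_1$ vanishes to high order) — and that, in the higher-pole contributions relevant to the fourth identity, no $t^{-1}$ term is generated by the subleading corrections; this is guaranteed because those corrections enter only at relative order $y^3$, pushing the double pole of $F_\beta\,\omega_1$ past the residue. Second, the precise constant $6\pi i$ and the common sign of the first two identities depend on pinning down the orientation convention in the residue pairing; the factor $6=2\cdot 3$ arises from the $2\pi i$ of the residue theorem together with the exponent $3$ of the cyclic cover, visible in $dx=3\,Q'(u_j)^{-1}y^2\,dy$. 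With the symmetry reduction in hand, the verification for $\int_{C_u}\omega_1\wedge\partial_j\omega_0$ is identical, completing the lemma.
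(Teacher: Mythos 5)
Your proof is correct, and its computational core --- evaluating $\int_{C_u}\omega_0\wedge(\partial_j\omega_1)$ by localizing at the totally ramified point over $x=u_j$, expanding in the local coordinate $y$, and applying Stokes/the residue theorem --- is exactly the computation the paper performs. Where you genuinely diverge is in how the other three identities are handled: the paper treats them as ``parallel'' residue computations, whereas you dispatch the two vanishing statements by $\rho$-equivariance of the cup product (since $\rho$ acts fiberwise on $\cC\rightarrow B$ it commutes with $\nabla$, so $\partial_j\omega_k$ has the same $\rho^*$-eigenvalue as $\omega_k$, and $e^{4\pi i/3}\cdot e^{4\pi i/3}\neq 1$, $e^{2\pi i/3}\cdot e^{2\pi i/3}\neq 1$ force the pairings to vanish), and you reduce the second nonzero integral to the first via the symmetry $\int\omega_a\wedge\xi(\omega_b)=\int\omega_b\wedge\xi(\omega_a)$ of the second fundamental form. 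Both shortcuts are valid and buy a cleaner, more structural argument: the eigenvalue bookkeeping explains \emph{why} the last two integrals vanish without checking residues, and the symmetry halves the remaining work; the paper's uniform residue approach is more elementary but leaves each of the four cases to a separate local expansion. Your constant-tracking is also the more careful of the two: you keep the factor $2$ in $\partial_j\omega_1\sim 2y^{-3}dy$ coming from $\partial_j\omega_k=\tfrac{2}{3(x-u_j)}\omega_k$, together with the sign in the reciprocity formula $\int\alpha\wedge[\beta]=-2\pi i\sum_p\mathrm{Res}_p(F_\beta\alpha)$, and still land on the stated value $6\pi i/Q'(u_j)$, whereas the paper's displayed computation drops that factor of $2$ and is loose with signs. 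Since only the projective class of the resulting matrix $A$ enters the rest of the argument, this discrepancy is harmless, but your version is the one I would keep.
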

\begin{proof}
The calculation is similar to \cite[Chap. 1.1]{CSP17}, we first solve $\int_{C_b}\omega_0\wedge [\partial_1\omega_1]$. Note that around $b_1$, 
\begin{equation}
    \partial_1\omega_1=\frac{dy}{(x-b_1)Q^{'}(x)}\sim \frac{dy}{y^3} + *
\end{equation}
where $*$ represents some higher-order terms. 
Let $b_1\in U\subset C_b$ be a small enough open neighborhood of $b_1\in C_b$ on which $y$ is a local coordinate. Since both $\omega_0$ and $\partial_1\omega_1$ are holomorphic over $C_b-U$,
\begin{eqnarray}
    &\int_{C_b}\omega_0\wedge [\partial_1\omega_1]=\int_{U}\omega_0\wedge [\partial_1\omega_1]\\
    &=-\int_{U}\frac{3ydy}{Q^{'}(x)}\wedge [\frac{dy}{y^3}]\\
    &=\int_{U}\frac{3dy}{Q^{'}(x)}\wedge [d(\frac{1}{y})]\\
    &=\int_{U}d[\frac{3dy}{Q^{'}(x)y}]\\
    &=\int_{\partial U}\frac{3dy}{Q^{'}(x)y}\\
    &=2\pi i\mathrm{Res}_{y=0}\frac{3}{Q^{'}(x)y}=\frac{6\pi i}{Q^{'}(b_1)}.
\end{eqnarray}
The proof of the remaining equations are parallel.
\end{proof}
A direct consequence of Lemma \ref{Lemma:KSmapclasscompute} is:
\begin{prop}\label{prop:rankofKSmap}
    For any $0\neq \xi\in T_bB$, its Kodaira--Spencer image has rank $2$. Moreover, it sends $\langle\omega_k, \ 1\leq k\leq 3\rangle$ to $\langle\overline{\omega_0}\rangle$ and $\omega_0$ to $\langle\overline{\omega_\xi}\rangle$, where $\omega_\xi$ is a holomorphic form in $\langle\omega_k, \ 1\leq k\leq 3\rangle$ orthogonal to $W_\xi:=\mathrm{Ker}(\xi)$.
\end{prop}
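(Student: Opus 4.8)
The plan is to read off both claims directly from the integral pairings computed in Lemma \ref{Lemma:KSmapclasscompute}, using the eigenspace structure of the $\rho$-action. First I would fix $0\neq\xi=a_1\partial_1+a_2\partial_2+a_3\partial_3\in T_uB$. By the previous lemma, the Kodaira-Spencer class of $\xi$ has rank exactly $2$ with two-dimensional kernel $W_\xi$ inside the three-dimensional $e^{2\pi i/3}$-eigenspace $\langle\omega_1,\omega_2,\omega_3\rangle$; this settles the rank assertion, so the content is to identify the images of the two eigenspaces under $\xi$. The key input is that $\rho$ acts by $e^{4\pi i/3}$ on $\overline{\omega_0}$ and by $e^{2\pi i/3}$ on each $\overline{\omega_k}$, and the Kodaira-Spencer map is $\rho$-equivariant, so $\nabla_\xi\langle\omega_1,\omega_2,\omega_3\rangle$ lands in the $e^{4\pi i/3}$-part of $H^{0,1}$, which is exactly $\langle\overline{\omega_0}\rangle$, while $\nabla_\xi\omega_0$ lands in the $e^{2\pi i/3}$-part $\langle\overline{\omega_1},\overline{\omega_2},\overline{\omega_3}\rangle$. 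This already gives the target spaces; it remains to pin down that $\nabla_\xi\omega_0$ is orthogonal to $W_\xi$.

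Next I would make this orthogonality precise. The symmetry of the second fundamental form (equivalently, the symmetry of Noether's map \eqref{eqn:Noethermap} landing in $\mathrm{Sym}^2$) says that for holomorphic forms $\omega,\omega'$ one has $\langle\nabla_\xi\omega,\overline{\omega'}\rangle=\langle\nabla_\xi\omega',\overline{\omega}\rangle$ up to the sign conventions fixed in Section \ref{Sec02}. Applying this with $\omega=\omega_0$ and $\omega'$ ranging over a basis of $W_\xi$, I would show $\langle\nabla_\xi\omega_0,\overline{\omega'}\rangle=\langle\nabla_\xi\omega',\overline{\omega_0}\rangle$, and the right-hand side vanishes precisely because $\omega'\in W_\xi=\mathrm{Ker}(\xi)$ means $\nabla_\xi\omega'=0$ in $H^{0,1}$. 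Hence $\nabla_\xi\omega_0$ is orthogonal to $W_\xi$, so it is a multiple of the unique (up to scale) form $\overline{\omega_\xi}$ with $\omega_\xi\in\langle\omega_1,\omega_2,\omega_3\rangle$ orthogonal to $W_\xi$. This yields the second assertion.

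To verify this concretely for $\xi=\partial_j$, I would use Lemma \ref{Lemma:KSmapclasscompute}: the third and fourth lines give $\int\omega_0\wedge(\partial_j\omega_0)=0$ and $\int\omega_1\wedge(\partial_j\omega_k)=0$, and by the evident symmetry in the roles of $\omega_1,\omega_2,\omega_3$ the analogous pairings with $\omega_2,\omega_3$ vanish too. These record exactly that $\partial_j\omega_0\perp\overline{\omega_0}$ (confirming the eigenspace statement) and that $\partial_j\omega_k\perp\overline{\omega_\ell}$ for all $1\leq k,\ell\leq 3$ (confirming $\nabla_{\partial_j}\langle\omega_1,\omega_2,\omega_3\rangle\subset\langle\overline{\omega_0}\rangle$), while the first two lines show the pairing with $\overline{\omega_0}$ is nonzero, so the map is genuinely rank $2$ and not $0$. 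The general $\xi$ then follows by linearity in $a_1,a_2,a_3$.

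The main obstacle I anticipate is bookkeeping the orthogonality claim cleanly: one must invoke the symmetry of the second fundamental form with the correct sign and identification of $H^{0,1}$ with $H^{1,0}$ via the polarization, rather than merely citing the vanishing pairings for the special directions $\partial_j$. The per-$\partial_j$ computation only directly produces a \emph{distinguished} orthogonal form adapted to the coordinate direction; promoting this to a coordinate-free statement ``$\nabla_\xi\omega_0$ is orthogonal to $W_\xi$'' for \emph{arbitrary} $\xi$ is where the symmetry argument is indispensable, and I would be careful that the $\overline{\omega_\xi}$ appearing there is the same form whose orthogonal complement in $\langle\omega_1,\omega_2,\omega_3\rangle$ is $W_\xi$, so that the two halves of the proposition are genuinely about the same decomposition.
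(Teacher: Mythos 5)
Your proposal is correct and follows essentially the same route as the paper, which simply records the proposition as ``a direct consequence of Lemma \ref{Lemma:KSmapclasscompute}'': the eigenspace decomposition under $\rho$ pins down the target spaces, and the computed pairings (extended by linearity and by the evident symmetry among $\omega_1,\omega_2,\omega_3$) identify $\nabla_\xi\omega_0$ as the annihilator of $W_\xi\oplus\langle\omega_0\rangle$ under Serre duality, i.e.\ as $\langle\overline{\omega_\xi}\rangle$. Your appeal to the symmetry of the bilinear form $(\omega,\omega')\mapsto\int_C\omega\wedge\nabla_\xi\omega'$ is a clean, coordinate-free way to get the orthogonality for arbitrary $\xi$, and is consistent with the lemma's explicit values $\int\omega_0\wedge(\partial_j\omega_1)=\int\omega_1\wedge(\partial_j\omega_0)$.
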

For any $\xi\in T_bB$, suppose $W_\xi=\langle\sigma_1,\sigma_2\rangle\leq H^{1,0}(C_b)$. We consider whether $\delta\nu_{c,C_b}$ vanishes on the first deformation of $C_b$ given by $\xi$. If we can show that $\xi$ is not supported on the base locus of $W_\xi$, then Theorem \ref{thm:collinopirolacoretheorem} implies $\delta\nu_{c,C_b}$ does not vanish on the first-order deformation of $C_b$ given by $\xi$.

By \eqref{eqn:intpairing}, for $\xi=\sum_{1\leq j\leq 3} a_j\partial_j$, 
\begin{equation}
    W_\xi=\{\sum_{1\leq l\leq 3}\alpha_l\omega_l \ |  \sum_{1\leq j,l\leq 3}\frac{\alpha_la_jb_j^{l-1}}{Q'(b_j)}=0 \}.
\end{equation}
On the other hand, for a form $\omega:=\frac{\alpha_3x^2+\alpha_2x+\alpha_1}{y^2}dx, \ \alpha_3\neq 0$, by \cite[Sec. 3]{McM13}, the zero locus is precisely given by the roots of $\alpha_3x^2+\alpha_2x+\alpha_1=0$. 

To sum up, for $C=C_b, \ b=(b_1,b_2,b_3)$, $\xi=a_1\partial_1+a_2\partial_2+a_3\partial_3\in T_bB$ makes $W_\xi$ have non-empty base locus if and only if 
\begin{equation}\label{eqn:diffeqnpositivefiber}
    [a_1,a_2,a_3]A\in \{[X:Y:Z]\in \bP^2 \ | \ XZ-Y^2=0\},
\end{equation}
where
\begin{equation}
    A=
\begin{pmatrix}
\frac{1}{Q'(b_1)} & \frac{b_1}{Q'(b_1)} & \frac{b_1^2}{Q'(b_1)} \\
\frac{1}{Q'(b_2)} & \frac{b_2}{Q'(b_2)} & \frac{b_2^2}{Q'(b_2)} \\
\frac{1}{Q'(b_3)} & \frac{b_3}{Q'(b_3)} & \frac{b_3^2}{Q'(b_3)} 
\end{pmatrix}.
\end{equation}

Since this is an irreducible plane quadric which does not contain any linear subspaces of $\bP^2$ other than points, we may conclude:
\begin{theorem}\label{thm:partialmainthm2}
The Ceresa normal function $\nu_c$ has rank $\geq 2$ for the family $\cC\rightarrow B$. Moreover, if $L\subset B$ is a curve on which $\nu_c$ is locally constant, then the tangent bundle $TL$ must contained in the subbundle of $TB$ determined by \eqref{eqn:diffeqnpositivefiber}.
\end{theorem}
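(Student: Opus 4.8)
The plan is to combine the explicit infinitesimal computation already recorded with two general facts: that equation \eqref{eqn:diffeqnpositivefiber} cuts out a smooth \emph{conic} subbundle of $TB$, and that the vanishing of the tangent map of $\nu_c$ in a direction $\xi$ forces the vanishing of the first Griffiths infinitesimal invariant $\delta\nu_c$ in that direction. First I would record that the matrix $A$ is invertible on all of $B$: its determinant equals the Vandermonde $\prod_{i<j}(u_j-u_i)$ divided by $\prod_j Q'(u_j)$, and both factors are nonzero on $B$ since the $u_j$ are distinct and are simple roots of $Q$. Hence the linear map $[a_1:a_2:a_3]\mapsto [a_1:a_2:a_3]A$ is an isomorphism of $\bP(T_uB)\simeq\bP^2$, and \eqref{eqn:diffeqnpositivefiber} is the pullback of the smooth plane conic $\{XZ-Y^2=0\}$. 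Thus in each fiber it defines a smooth conic $\mathcal{Q}_u\subset\bP(T_uB)$, and the corresponding affine cones assemble into the subbundle of $TB$ named in the statement.

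For the Hodge-theoretic link, recall that for a local lift $v$ of $\nu_c$ one has, by Griffiths transversality, $\nabla_\xi v\in F^{-1}$, so $\pi_v(\nabla_\xi v)\in V/F^0$ is exactly the $P^{-1,0}$-component of $\nabla_\xi v$. This is the datum measured by $\delta\nu_c$, since $\delta\nu_c(\xi\otimes w)=\langle\nabla_\xi v,w\rangle$ pairs that component against $w\in P^{0,-1}$ via the polarization. Consequently, if $\xi$ lies in the kernel $K_u$ of the tangent map $\pi_v\circ d\nu_c$ at $u$, then $\delta\nu_c$ vanishes on every admissible Koszul input involving $\xi$; by Theorem \ref{thm:collinopirolacoretheorem} together with the base-locus computation preceding \eqref{eqn:diffeqnpositivefiber}, this forces $\xi$ to satisfy \eqref{eqn:diffeqnpositivefiber}, i.e.\ $[\xi]\in\mathcal{Q}_u$. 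In other words $\bP(K_u)\subseteq\mathcal{Q}_u$; note that only this inclusion is needed, never its (possibly false) converse.

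Both assertions now follow. If $L\subset B$ is a curve on which $\nu_c$ is locally constant, then $\nu_c|_L$ has rank $0$, so its tangent map vanishes; as this tangent map is the restriction of $\pi_v\circ d\nu_c$ to $T_uL$, every $\xi\in T_uL$ lies in $K_u$, hence in the cone over $\mathcal{Q}_u$ by the previous paragraph, giving $TL$ contained in the subbundle determined by \eqref{eqn:diffeqnpositivefiber} fiberwise. For the rank bound, observe that $K_u$ is a linear subspace of $T_uB$, so $\bP(K_u)$ is a linear subspace of $\bP^2$ contained in the smooth conic $\mathcal{Q}_u$; a smooth plane conic is irreducible of degree $2$ and therefore contains no line, whence $\dim\bP(K_u)\leq 0$ and $\dim K_u\leq 1$. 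Since $\dim B=3$, the rank of $\nu_c$ at $u$ equals $3-\dim K_u\geq 2$, and as this holds at every point it holds at the very general point, so $\nu_c$ has rank $\geq 2$.

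The main obstacle is the link in the second paragraph: one must check carefully that the vanishing of $\pi_v(\nabla_\xi v)$ really does force $\delta\nu_c$ to vanish on all the admissible inputs $\xi\otimes\sigma_1\wedge\sigma_2\wedge\overline\omega$ appearing in Theorem \ref{thm:collinopirolacoretheorem}, so that the base-locus criterion applies and delivers \eqref{eqn:diffeqnpositivefiber}. This requires matching the dual representation \eqref{Eqn:1stGrifinfinvdualrepresentation} of $\delta\nu_c$ with the Collino--Pirola pairing, i.e.\ identifying the polarization pairing of $P^{-1,0}$ against $P^{0,-1}$ with the evaluation on $\sigma_1\wedge\sigma_2\wedge\overline\omega$. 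By contrast the two remaining ingredients — invertibility of $A$ and smoothness of $\mathcal{Q}_u$ — are routine once the Vandermonde determinant is computed.
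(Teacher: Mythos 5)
Your proposal is correct and follows essentially the same route as the paper: the Collino--Pirola criterion (Theorem \ref{thm:collinopirolacoretheorem}) applied to the computed kernels $W_\xi$ reduces everything to the base-locus condition \eqref{eqn:diffeqnpositivefiber}, and both conclusions follow because the resulting plane conic contains no positive-dimensional linear subspace. Your added checks --- the invertibility of $A$ via the Vandermonde determinant (so the conic is smooth on all of $B$, where the paper only asserts this ``generically''), and the explicit link between the vanishing of the tangent map of $\nu_c$ in a direction $\xi$ and the vanishing of $\delta\nu_c$ on the admissible Koszul inputs involving $\xi$ --- are precisely the details the paper leaves implicit.
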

% \begin{remark}
% Since the family $\cC\rightarrow B$ descends to a family $\cC\rightarrow B/S_3$ and $\mathrm{Sym}^3\bP^1\simeq \bP^3$, Theorem \ref{thm:mainthm2} implies the curves in $B$ on which $\nu_c$ is locally constant are given by projective curves in $\bP^3$.
% \end{remark}

By Theorem \ref{Thm:GaoZhangthm}, There exist two algebraic subvarieties $B_3, B_2$ of $B$ such that for any $b\in B_i$, the rank of $\nu_c$ at $b$ is $i$. In particular, $B_3$ is either empty or Zariski dense in $B$, and $B_2$ is Zariski closed in $B$. Moreover, by Theorem \ref{Thm:foliationbyalgsubvar}, $B_2$ admits a foliation by algebraic curves over each of which $\nu_c$ is locally constant.

To finish proving Theorem \ref{thm:mainthm2} we need two important ingradients: Definability of normal functions and the algebraic monodromy group, both are introduced in details in the Appendix \ref{Sec07}.

%-----------------------------------------
\subsection{The rank of $\nu_c$}
%-----------------------------------------

To show the rank of $\nu_c$ is maximal (or equivalently, $B_3\neq \emptyset$), we first look at the algebraic monodromy group (see Definition \ref{def:algmonogroup}) of the $\bZ$-PVHS $\cV\rightarrow B$ given by the family of curves \eqref{eqn:aspecialfamilygenus4curves}. Let $\Gamma_0$ be its monodromy group. \cite[Thm 2.1]{Xu18} implies:
\begin{prop}\label{prop:bigmonodromy}
    The $\bR$-algebraic monodromy group $\overline{\Gamma_0}^{\bR}$ of $\cV\rightarrow B$ is isomorphic to $\mathrm{SU}(3,1)$. In particular, $\wedge^3_0V$ does not contain any $1$-dimensional $\overline{\Gamma}^{\bR}$-sub-representation.
\end{prop}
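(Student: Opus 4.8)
The plan is to treat the two assertions in turn. For the identification of $\overline{\Gamma}^{\bR}$, I would first diagonalize the order-$3$ automorphism $\rho$ on $V_\bC = H^1(C_u,\bC)$, splitting it into the $\zeta$- and $\zeta^2$-eigenspaces with $\zeta = e^{2\pi i/3}$. The Hodge data recorded above show that $W := V_\zeta = H^{1,0}_\zeta \oplus H^{0,1}_\zeta$ has Hodge numbers $(h^{1,0},h^{0,1}) = (3,1)$, while $V_{\zeta^2} = \overline{W}$ is identified with $W^\vee$ through the polarization $Q$. Consequently $Q$ together with the $\bZ[\zeta]$-action equips $W$ with a $\rho$-Hermitian form $h$ of signature $(3,1)$; since the monodromy commutes with $\rho$ and preserves both $Q$ and the $\bZ[\zeta]$-lattice, $\Gamma \le \mathrm{SU}(W,h) \cong \mathrm{SU}(3,1)$, and therefore $\overline{\Gamma}^{\bR} \le \mathrm{SU}(3,1)$.

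The reverse inclusion is the only substantive point, and I expect it to be the main obstacle: it amounts to the Zariski-density of $\Gamma$ in $\mathrm{SU}(3,1)$. This is precisely the Deligne--Mostow type input provided by \cite[Thm 2.1]{Xu18} for cyclic triple covers; I would verify that the branch data of \eqref{eqn:aspecialfamilygenus4curves} --- six simple points carrying the common local exponent $1/3$ --- fall under its hypotheses, which yields that $\Gamma$ is Zariski-dense and hence $\overline{\Gamma}^{\bR} = \mathrm{SU}(3,1)$. Everything outside this density statement is formal linear algebra.

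For the representation statement I would write $V_\bC = W \oplus W^\vee$ as $\mathrm{SU}(3,1)$-modules, with $W$ the standard $4$-dimensional representation, and expand
\[
\wedge^3 V_\bC = \wedge^3 W \oplus (\wedge^2 W \otimes W^\vee) \oplus (W \otimes \wedge^2 W^\vee) \oplus \wedge^3 W^\vee .
\]
The symplectic form is the invariant vector of $W \otimes W^\vee$, so wedging with it realizes the Lefschetz image inside the two mixed summands; deleting it leaves the primitive part $\wedge^3_0 V_\bC$ as a sum of irreducible $\mathrm{SL}(W)$-constituents: the two $4$-dimensional pieces $\wedge^3 W \cong W^\vee$ and $\wedge^3 W^\vee \cong W$, and two dual $20$-dimensional pieces sitting in $\wedge^2 W \otimes W^\vee$ and $W \otimes \wedge^2 W^\vee$. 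Since $\rho$ acts on $W$ by the scalar $\zeta$, these constituents carry $\rho$-eigenvalues $1,1,\zeta,\zeta^2$; in particular none of them is trivial, so $\wedge^3_0 V$ has no nonzero $\overline{\Gamma}^{\bR}$-invariant vector.

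Finally I would isolate the factor relevant to $\nu_c$. Because the primitive projection of the Ceresa class is independent of the base point (\cite[Prop. 2.2.1]{CP95}) it is canonical, hence $\rho$-invariant, so $\nu_c$ takes values in the $\rho$-invariant summand $\wedge^3 W \oplus \wedge^3 W^\vee \cong W \oplus W^\vee$. Its underlying real representation is the realification of the standard representation, which is irreducible of complex type for $\mathrm{SU}(3,1)$ (as $W \not\cong W^\vee$ over $\bC$); thus this factor is $\overline{\Gamma}^{\bR}$-irreducible. This irreducibility, together with the vanishing of invariants on all of $\wedge^3_0 V$, is exactly what the Zariski-density argument in the proof of Theorem \ref{thm:mainthm2} requires in order to rule out a nonzero flat lift of $\nu_c$ and force maximal rank.
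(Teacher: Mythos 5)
Your identification of $\overline{\Gamma}^{\bR}$ follows exactly the route the paper intends: the paper's entire ``proof'' is the citation of \cite[Thm 2.1]{Xu18}, and your eigenspace decomposition, the signature-$(3,1)$ Hermitian form on $W=V_\zeta$ (with $h^{1,0}_\zeta=3$, $h^{0,1}_\zeta=1$ read off from the displayed basis of $H^0(C_u,\Omega^1_{C_u})$), and the appeal to Xu for Zariski density supply the details the paper omits. One small imprecision: preserving $Q$ and the $\bZ[\zeta]$-lattice only puts $\Gamma$ in $\mathrm{U}(W,h)$ (determinants can be roots of unity); to land in $\mathrm{SU}(3,1)$ you should invoke the fact that $\overline{\Gamma}^{\bR}$ is by definition connected and, by Deligne--Andr\'e semisimplicity, semisimple, hence contained in the derived group.

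The substantive point is the second assertion. Your own computation shows that, as stated, it is false: $\wedge^3_0 V_\bC$ decomposes under $\mathrm{SL}(W)$ into four irreducible constituents of dimensions $4,4,20,20$ (equivalently $\wedge^3_0 V_\bR$ splits into real irreducibles of dimensions $8$ and $40$), so $\wedge^3_0V$ is \emph{not} irreducible as a $\overline{\Gamma}^{\bR}$-representation. What you actually prove --- and what the Zariski-density argument in the proof of Theorem \ref{thm:mainthm2} genuinely needs --- is the weaker statement that none of these constituents is trivial, i.e.\ $(\wedge^3_0 V_\bR)^{\overline{\Gamma}^{\bR}}=0$, so no nonzero flat lift $w$ of $\nu_c$ can exist over a leaf with full monodromy. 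This is a correction to the proposition rather than a defect of your argument, but you substitute the weaker claim silently; you should state explicitly that the irreducibility assertion must be read as (or replaced by) the absence of invariants. Your final paragraph isolating the $\rho$-invariant summand $\wedge^3W\oplus\wedge^3W^\vee$ containing the Ceresa class is a useful refinement not present in the paper, though it is not needed for the proposition itself.
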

\begin{proof}
    Using the fundamental weights of $\mathfrak{sl}_4(\bC)$, $V\simeq V^{\omega_1+\omega_3}$, we have a decomposition of $\wedge^3V$ as its $\mathrm{SU}(3,1)$-irreducible representations:
    \begin{equation}
       \wedge^3V\simeq V^{\omega_1+\omega_2}\oplus V^{\omega_3+\omega_2}\oplus V^{\omega_1}\oplus V^{\omega_3}  
    \end{equation}
    Clearly there is no $1$-dimensional summand.
\end{proof}

Denote $\Phi_c: B\rightarrow \Gamma\backslash \mathcal{D}$ as the mixed period map associated to the Ceresa normal function $\nu_c$, and $\Phi_c^0: B\rightarrow \Gamma_0\backslash D$ be the pure period map given by projecting $\Phi_c$ to its graded pieces. See \eqref{eqn:normalfunctionmixedperiodmap}.

We claim that to show $\mathrm{rank}(\nu_c)=3$, it is enough to show $\mathrm{dim}(\Phi_c^0(B))=3$. This is because Proposition \ref{prop:bigmonodromy} implies the $\bZ$-PVHS $\wedge^3_0\cV\rightarrow B$ satisfies the assumption of Theorem \ref{Thm:GaoZhangthmrankpart}. To show $\mathrm{dim}(\Phi_c^0(B))=3$, it is enough to show the associated infinitesimal Torelli map is injective, that is:
\begin{prop}
For any $b\in B$, the map:
\begin{equation}\label{}
        d\Phi_c^0:T_bB\xrightarrow{}H^1(C_b, T_{C_b})\rightarrow \oplus_{k\in \bZ} \mathrm{Hom}(\cF^k(\wedge^3_0V_b), \cF^{k-1}(\wedge^3_0V_b)/\cF^k(\wedge^3_0V_b))
\end{equation}
is injective.
\end{prop}
\begin{proof}
    It is enough to show the component of $d\Phi_c^0$ with $k=1$ is injective. Note that $\cF^1(\wedge^3_0V_b)$ is genereted by:
    \[\{\omega_i\wedge\omega_j\wedge\omega_k, \ 0\leq i,j,k\leq 3\}.\]
    Take any $0\neq \xi\in T_bB$, Proposition \ref{prop:rankofKSmap} implies we may denote $W_\xi=\{\eta_1, \eta_2\}$ with $W_\xi\perp\langle \omega_0\rangle$. Hence $\xi(\omega_0\wedge \eta_1\wedge\eta_2)\in\eta_1\wedge\eta_2\wedge H^{0,1}(C_b)$ which is non-zero in $\cF^{0}(\wedge^3_0V_b)/\cF^1(\wedge^3_0V_b)$ as well as primitive.
\end{proof}
This concludes the proof for rank$(\nu_c)=3$.
\subsection{The subvariety $B_2\subset B$}

We now prove the statement in Theorem \ref{thm:mainthm2} about the closure of a subvariety of $B$ over which $\nu_c$ is locally constant. Such a variety must be an algebraic curve in $B$ as well as a leaf of the foliation over $B_2$ (Theorem \ref{Thm:foliationbyalgsubvar}). Take a general leaf $\zeta\subset B_2$. Denote $T:=\bP^3-B$, and $\zeta', T'$ as the image of $\zeta, T$ in $\bP^3\simeq \mathrm{Sym}^3\bP^1$. Since $\overline{\zeta'}\cap T'\neq \emptyset$, $\overline{\zeta}\cap T\neq \emptyset$. Suppose $\overline{\zeta}$ intersects $T$ at a smooth point of $T$. 
\begin{lemma}
    $\cV|_\zeta\rightarrow \zeta$ and $\cV\rightarrow B$ have the same $\bR$-algebraic monodromy group $\mathrm{SU}(3,1)$.
\end{lemma}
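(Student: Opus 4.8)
The plan is to show that $\overline{\Gamma_\zeta}^{\bR}$, which is a priori only a closed connected subgroup of $\overline{\Gamma}^{\bR}\cong\mathrm{SU}(3,1)$, is in fact a \emph{normal} and \emph{nontrivial} subgroup; since $\mathfrak{su}(3,1)$ is a simple real Lie algebra, this forces $\overline{\Gamma_\zeta}^{\bR}=\overline{\Gamma}^{\bR}$. Recall that $\Gamma_\zeta$ is the image of $\pi_1(\zeta)\to\Gamma$, so $\overline{\Gamma_\zeta}^{\bR}\leq\overline{\Gamma}^{\bR}$ is automatic; the content is the reverse inclusion.

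First I would promote the abstract foliation of Theorem \ref{Thm:foliationbyalgsubvar} to a genuine algebraic fibration. By that theorem the leaves are algebraic curves; after shrinking $B^\circ$ and choosing a Zariski-open algebraic parameter variety $T$ of leaves, I may assume that the incidence variety $\mathcal{Z}=\{(b,t)\ :\ b\in\zeta_t\}$ is smooth and irreducible, that $\mathcal{Z}\to T$ is a topologically locally trivial fibration whose general fiber is the leaf $\zeta=\zeta_t$, and that the projection $p\colon\mathcal{Z}\to B^\circ$ is birational (each point lies on a unique leaf). Since $B^\circ$ is Zariski-dense in $B$ and the connected algebraic monodromy group is insensitive both to passing to a dense open subset and to dominant (here birational) modifications, I get $\overline{\Gamma_{\mathcal{Z}}}^{\bR}=\overline{\Gamma_{B^\circ}}^{\bR}=\overline{\Gamma}^{\bR}\cong\mathrm{SU}(3,1)$.

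Next I would run the homotopy exact sequence of the fibration $\mathcal{Z}\to T$, namely $\pi_1(\zeta)\to\pi_1(\mathcal{Z})\to\pi_1(T)\to 1$, which exhibits the image of $\pi_1(\zeta)$ as a normal subgroup of $\pi_1(\mathcal{Z})$. Pushing forward under the monodromy representation (which factors through $p_*$), the group $\Gamma_\zeta$ is normal in $\Gamma_{\mathcal{Z}}$; taking Zariski closures and connected components, $\overline{\Gamma_\zeta}^{\bR}$ is a connected normal subgroup of $\overline{\Gamma_{\mathcal{Z}}}^{\bR}\cong\mathrm{SU}(3,1)$. By simplicity of $\mathfrak{su}(3,1)$, the only such subgroups are $\{1\}$ and $\mathrm{SU}(3,1)$.

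Finally I would exclude the trivial case. If $\overline{\Gamma_\zeta}^{\bR}=\{1\}$, then $\Gamma_\zeta$ is finite, so after a finite étale cover $\cV|_\zeta$ acquires trivial monodromy; the theorem of the fixed part then forces the Hodge filtration to be constant along $\zeta$, i.e. the period map is constant on $\zeta$. But the Kodaira–Spencer computations above show that $T_uB\to H^1(C_u,T_{C_u})$ is injective for every $u$, and composing with Noether's (injective) map realizes the differential of the period map $B\to\mathcal{A}_4$ as injective; hence that period map is an immersion and contracts no curve, contradicting constancy along $\zeta$. Therefore $\overline{\Gamma_\zeta}^{\bR}=\mathrm{SU}(3,1)=\overline{\Gamma}^{\bR}$. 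I expect the main obstacle to be the first step, namely upgrading the foliation of Theorem \ref{Thm:foliationbyalgsubvar} to an honest algebraic fibration to which the homotopy sequence applies; once that is in place, the normality and simplicity arguments are formal.
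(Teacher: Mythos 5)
Your proposal is correct in outline, but it takes a genuinely different route from the paper. The paper's proof is a two-line Zariski--Lefschetz argument: since $B^\circ$ is Zariski open in $\bP^3$ (which is simply connected), $\pi_1(B^\circ)$ is generated by loops about the irreducible components of the boundary divisor $T=\bP^3-B^\circ$, and a general leaf $\zeta$ meets every component of $T$ transversally, so $\Gamma_\zeta$ and $\Gamma$ have the same set of generators and hence coincide as groups --- no normality, simplicity, or fixed-part input is needed. You instead promote the foliation of Theorem \ref{Thm:foliationbyalgsubvar} to a dominating algebraic family $\mathcal{Z}\to T$ of leaves, use the homotopy exact sequence to see that $\overline{\Gamma_\zeta}^{\bR}$ is a connected \emph{normal} subgroup of $\overline{\Gamma}^{\bR}\cong\mathrm{SU}(3,1)$, invoke simplicity of $\mathfrak{su}(3,1)$, and rule out triviality via the theorem of the fixed part together with the injectivity of the Kodaira--Spencer map (so the period map $B\to\mathcal{A}_4$ is an immersion and contracts no leaf). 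Each approach buys something: the paper's is far shorter, but its assertion that a general \emph{leaf} (which is not a general complete-intersection curve) picks up a full generating set, conjugating paths included, is exactly the kind of Lefschetz-type surjectivity your argument avoids having to justify; your argument is insensitive to $B^\circ$ being open in $\bP^3$ and would work for any base, at the cost of the (standard but nontrivial) step of organizing the algebraic leaves into an incidence family with $\mathcal{Z}\to B^\circ$ birational, and of front-loading the simplicity/irreducibility of $\mathrm{SU}(3,1)$ that the paper only uses \emph{after} the lemma. Note also that you only conclude equality of the connected $\bR$-algebraic monodromy groups rather than of $\Gamma_\zeta$ and $\Gamma$ themselves, but that is all the lemma asserts and all that the subsequent stabilizer argument requires.
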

\begin{proof}
Note that $T$ is a union of $12$ hyperplanes which are all symmetric under the moduli sense, and the closure of every leaf contained in $B_2$ must intersect $T$ (as their images in $\bP^3$ do). It follows that the closure $\overline{\zeta}$ of a general leaf $\zeta\subset B_2$ must intersect each hyperplane in $T$ at a smooth point. This implies the monodromy group $\Gamma_{\zeta}$ over $\zeta$ has the same set of generators with the full monodromy group $\Gamma_0$ over $(\bP^1)^3$ given by the monodromy operators around the smooth part of each hyperplane in $T$. 
\end{proof}
On the other hand, since $\nu_c$ is locally constant on $\zeta$, there exists $w\in H^0(\zeta, V_\bR)$ lifting $\nu_c$, which means $w$ should be fixed by the monodromy group $\Gamma_\zeta$ which implies $\overline{\Gamma}^\bR_\zeta=\overline{\Gamma_0}^\bR=\mathrm{SU}(3,1)$ is contained in the stabilizer of $w$, a contradiction to Proposition \ref{prop:bigmonodromy}. This concludes the proof of Theorem \ref{thm:mainthm2}.

\subsection{Some remarks}

We are able to conclude a bit more: The proof showed that if there is an ($1$-dimensional) leaf $\zeta_0$ over which $\nu_c$ is locally constant, then the $\bR$-algebraic monodromy group $\overline{\Gamma}^\bR_{\zeta_0}$ must not be generic. Therefore, the positive-dimensional locus on which the Ceresa normal function is torsion must have special $\bQ$-algebraic monodromy group. 
\begin{corollary}
    The positive-dimensional locus in $B$ on which the Ceresa normal function is torsion must contain in the (algebraic) weakly special subspace of $B$ (in the sense of \cite[Sec. 6.3]{BBKT24}). 
\end{corollary}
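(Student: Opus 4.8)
The plan is to upgrade the stabilizer argument used for the maximal-rank statement from a single generic leaf to an arbitrary positive-dimensional torsion locus $X\subseteq B$, and then to repackage the resulting monodromy constraint geometrically via the theory of weakly special subspaces. First I would extract a nonzero monodromy-invariant integral vector from the torsion hypothesis. If $\nu_c|_X$ is torsion, say of order $N$, then $\nu_c|_X$ has rank $0$, so by the lemma characterizing rank-$0$ normal functions it admits locally a $\nabla$-flat lift $\tilde\nu$ valued in $\cV_{\bR}$. Passing to a finite \'etale cover $\pi\colon\tilde X\to X$ on which this lift is single-valued, the section $v:=N\tilde\nu$ lies in $H^0(\tilde X,\pi^*\cV_{\bZ})$ and is genuinely $\nabla$-flat and integral, hence invariant under the monodromy group $\Gamma_X$; since $\nu_c|_X$ is a nontrivial torsion class, $v$ is nonzero.

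Next I would deduce the monodromy constraint. Invariance of $v$ forces $\overline{\Gamma}^{\bR}_X\subseteq \tStab(v)$. As the generic $\bR$-algebraic monodromy group is $\mathrm{SU}(3,1)$ and $\wedge^3_0 V$ is irreducible as an $\mathrm{SU}(3,1)$-representation, there is no nonzero $\mathrm{SU}(3,1)$-fixed vector; therefore $\overline{\Gamma}^{\bR}_X$ is a proper subgroup of $\mathrm{SU}(3,1)$, and correspondingly the $\bQ$-algebraic monodromy $\overline{\Gamma}^{\bQ}_X$ is non-generic. This is precisely the observation recorded just above the statement, now applied to $X$ rather than to a generic leaf.

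Finally I would translate ``non-generic $\bQ$-algebraic monodromy'' into ``contained in a proper weakly special subspace'' using the framework of \cite[Sec. 6.3]{BBKT24}. The key input is that every subvariety $X$ is contained in a smallest weakly special subvariety $Y$, characterized by the equality $\overline{\Gamma}^{\bQ}_Y=\overline{\Gamma}^{\bQ}_X$ of algebraic monodromy groups, with the algebraicity of $Y$ resting on the definability and Ax--Schanuel results surveyed in Appendix \ref{Sec07}. Since $\overline{\Gamma}^{\bQ}_X$ is proper, this $Y$ is a proper weakly special subspace of $B$ containing $X$, which is the assertion.

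The main obstacle I anticipate is the passage from the torsion condition on the section $\nu_c$ to a nonzero $\Gamma_X$-fixed integral vector, in particular disentangling the genuinely torsion case from the sublocus where the Ceresa cycle is rationally trivial, on which the naive lift $v$ could vanish; there one must instead use that the vanishing of $\nu_c$ imposes a Hodge-theoretic splitting along $X$ whose stabilizer is again proper, so the conclusion persists. A secondary point requiring care is checking that the real constraint $\overline{\Gamma}^{\bR}_X\subsetneq \mathrm{SU}(3,1)$ indeed descends to the $\bQ$-form $\overline{\Gamma}^{\bQ}_X$ demanded by the weakly special dictionary, which follows from the normality of $\overline{\Gamma}^{\bQ}_X$ in the derived Mumford--Tate group recalled after the definition of algebraic monodromy.
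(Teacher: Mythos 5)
Your main line is exactly the paper's argument: the paper justifies this corollary by the paragraph immediately preceding it, namely that a positive\-/dimensional torsion locus is one on which $\nu_c$ is locally constant, so the stabilizer argument from the maximal-rank proof (a flat lift of $\nu_c$ fixed by the monodromy forces $\overline{\Gamma}^{\bR}_X$ into a proper subgroup of $\mathrm{SU}(3,1)$, since $\wedge^3_0V$ is irreducible and has no nonzero fixed vector) makes the algebraic monodromy of $X$ non-generic, and non-generic $\bQ$-algebraic monodromy is precisely the condition for being contained in a proper weakly special subvariety in the sense of \cite[Sec. 6.3]{BBKT24}. Your added care about producing an \emph{integral} invariant vector and about descending from $\bR$ to $\bQ$ is harmless and essentially automatic, since $\overline{\Gamma}^{\bR}_X$ is the base change of $\overline{\Gamma}^{\bQ}_X$.

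The one point where you go beyond the paper is your fallback for the degenerate case $v=0$ (i.e. $\nu_c|_X\equiv 0$), and that patch does not work as stated. An integral splitting of $0\to\cV\to\cE\to\bZ(0)\to 0$ along $X$ only says that the mixed monodromy fixes a lift of $1\in\bZ(0)$; the stabilizer of such a lift inside $\mathrm{Aut}(E,W)$ is a complement to the unipotent radical and surjects onto all of $\mathrm{Aut}(V)$, so it imposes no condition at all on $\overline{\Gamma}^{\bR}_X$ acting on $V$ --- the ``stabilizer of the splitting'' is not proper in the relevant sense. When the flat lift is zero the vector-stabilizer argument is simply vacuous, for your write-up and for the paper alike (the paper silently assumes $w\neq 0$). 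Handling that stratum requires a different input, e.g. that the identically-vanishing locus is itself a (weakly) special subvariety of the mixed variation $\cE$ by the structure theory of \cite{BBKT24}, or a separate geometric argument that $\nu_c$ does not vanish identically on any positive-dimensional subvariety of this particular $B$; it cannot be reduced to properness of a stabilizer in $\mathrm{SU}(3,1)$. A smaller caveat, inherited from the paper rather than introduced by you: local constancy of $\nu_c|_X$ gives a flat lift that is a priori only single-valued modulo $\cV_\bZ$, and the passage to a genuinely monodromy-invariant section (your finite \'etale cover step) uses the torsion hypothesis in an essential way, so it is worth stating that the kernel of the resulting crossed homomorphism to $V_\bZ$ has finite index precisely because $N\tilde\nu$ already takes values in the discrete local system.
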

Indeed, by \cite[Sec. 4]{QZ24}, the Ceresa cycle is torsion over the $1$-dimensional sub-family of \eqref{eqn:aspecialfamilygenus4curves}:
\begin{equation}
    \mathcal{L}:=\{(a,[X:Z:Y])\in (\bP^1-\{0,1,\infty\}) \times \bP \ | \ Y^3=(X^3-Z^3)(X^3-aZ^3)\},
\end{equation}
hence $\nu_c$ must be locally constant along this family. The base of this family is a union of lines $L\subset B_2$ whose closure only intersect $T$ at the $0$-dimensional strata. It is not clear whether $L$ is a union of irreducible components of $B_2$ or not.
% ---------------------------------------
\section{Appendix: Definability of normal functions}\label{Sec07}
% ---------------------------------------

In this section we survey the theory of normal functions using the mixed Hodge theory aspects. We also introduce the application of o-minimal geometry in Hodge theory. As a consequence, we prove a theorem regarding the locus on which a normal function is locally constant.

Some references for this sections are \cite{BKT20}, \cite{BBT22} and \cite{BBKT24}.

%-----------------------------------------
\subsection{Normal function as variation of mixed Hodge structures}
%-----------------------------------------

Regarding basic facts of admissible integral polarized variation of mixed Hodge structures ($\bZ$-PVMHS), we refer readers to \cite[Sec. 3-4]{BBKT24}. In this section we assume all normal functions and VMHS are admissible.

A normal function $\nu$ underlying a $\bZ$-PVHS $\cV\rightarrow B$ can be regarded as a $\bZ$-PVMHS $(\cE, \mathcal{W}, \cF)$ with only $2$ non-trivial graded quotients, with:
\begin{eqnarray}\label{eqn:VMHSfornormalfunction}
    &\mathrm{Gr^{\mathcal{W}}_{-1}\cE}, \cF(\mathrm{Gr^{\mathcal{W}}_{-1}\cE})\simeq (\cV, \cF)\\
    &\mathrm{Gr^{\mathcal{W}}_{0}\cE}\simeq \bZ(0),
\end{eqnarray}
where $\bZ(0)$ is the Tate Hodge structure. In other words, the exact sequence
\begin{equation}\label{eqn:exactsequenceZMHS}
    0\rightarrow \cV\rightarrow \cE\rightarrow \bZ(0)\rightarrow 0
\end{equation}
realizes $\nu$ as an element in $\mathrm{Ext}^1_{\bZ-\mathrm{VPMHS}}(\bZ(0), \cV)$. Note that the sequence \eqref{eqn:exactsequenceZMHS} always splits over $\bR$. We say the normal function is vanishing (resp. torsion) if the corresponding sequence \eqref{eqn:exactsequenceZMHS} splits over $\bZ$ (resp. $\bQ$).

Let $E=\cE_b$ for some reference point $b\in B$. Let $\bold{G}$ be the $\bQ$-algebraic subgroup of $\mathrm{Aut}(E)$ preserving the weight filtration $W=\mathcal{W}_s$ as well as corresponding graded polarizations, and $\bold{U}$ be its unipotent radical. Let $\cD$ be the mixed period domain parametrizing all $\bZ$-PVMHS on $E$ with the type given by \eqref{eqn:VMHSfornormalfunction}.

Let $\cD_\bR\subset \cD$ be the subdomain of all $\bR$-split members in $\cD$, $D$ be the period domain for the underlying $\bZ$-PVHS $\cV$, and $\mathcal{S}(W)$ be the
$\bR$-splitting variety of $W$. 
\begin{prop}
The following properties hold.
    \begin{enumerate}
        \item $\bold{G}(\bR)\bold{U}(\bC)$ acts transitively on $\cD$.
        \item $\cD=\cD_\bR$.
        \item $\cD_\bR\simeq D\times \mathcal{S}(W)$.
    \end{enumerate}
\end{prop}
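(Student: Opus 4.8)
The feature I would exploit is that $W$ has only two nontrivial graded quotients, $\mathrm{Gr}^W_{-1}\simeq V$ pure of weight $-1$ and $\mathrm{Gr}^W_0\simeq \bZ(0)$ of pure type $(0,0)$, with $W_{-2}=0$. Hence the unipotent radical is the vector group $\bold{U}\simeq \mathrm{Hom}(\mathrm{Gr}^W_0,\mathrm{Gr}^W_{-1})\simeq V$; a Levi factor $\bold{G}^{\mathrm{red}}\simeq \mathrm{Aut}(V,Q)\times\{\pm 1\}$ acts on the graded data only through its first factor; and $\cD$ fibers over the pure period domain $D$ of $V$ by remembering the Hodge structure on $\mathrm{Gr}^W_{-1}$, the one on $\mathrm{Gr}^W_0$ being rigid. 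I would establish the three assertions in the order (1), (2), (3), using (2) in the proof of (3).

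For (1) I would combine Griffiths' transitivity on the pure period domain with the vector-group action on extension data. The real Levi $\mathrm{Aut}(V,Q)(\bR)$ acts transitively on $D$, so it suffices to act transitively on each fiber of $\cD\to D$. With the graded Hodge structure fixed one checks $F^pE=F^pV$ for $p\ge 1$, while $F^0E_\bC=F^0V_\bC+\bC e_F$ is determined by the Hodge lift $e_F\in F^0E_\bC$ of the generator of $\mathrm{Gr}^W_0$, well defined modulo $F^0V_\bC$; thus the fiber is a torsor under $V_\bC/F^0V_\bC$ on which $\bold{U}(\bC)\simeq V_\bC$ acts by translation with $F^0V_\bC$ in the stabilizer. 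This action is transitive, and together with the real Levi decomposition $\bold{G}(\bR)=\bold{G}^{\mathrm{red}}(\bR)\bold{U}(\bR)$ it shows $\bold{G}(\bR)\bold{U}(\bC)$ acts transitively on $\cD$. (In fact, since $V$ has weight $-1$, already $\bold{U}(\bR)\simeq V_\bR$ surjects onto $V_\bC/F^0V_\bC$, reflecting the $\bR$-splitting proved next.)

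For (2) I would show that Deligne's obstruction to $\bR$-splitness vanishes on all of $\cD$. For any $F\in\cD$ there is Deligne's canonical element $\delta\in\bigoplus_{r,s<0}\mathfrak{g}^{r,s}$, relative to the induced bigrading of $\mathfrak{g}=\mathrm{End}(E)$, with $F$ being $\bR$-split precisely when $\delta=0$. But the only nonzero Deligne pieces $I^{p,q}$ of $E$ have $p+q\in\{-1,0\}$, since nothing lives in weights $\le -2$; any $X\in\mathfrak{g}^{r,s}$ with $r,s<0$ shifts $p+q$ by $r+s\le -2$ and hence annihilates every piece, so $\bigoplus_{r,s<0}\mathfrak{g}^{r,s}=0$ and $\delta=0$. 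Thus $\cD=\cD_\bR$, consistent with the earlier remark that \eqref{eqn:exactsequenceZMHS} always splits over $\bR$; equivalently, $V_\bC=F^0V_\bC\oplus\overline{F^0V_\bC}$ makes $V_\bR\to V_\bC/F^0V_\bC$ an isomorphism, so every extension class has a real representative. For (3) I would send an $\bR$-split member of $\cD$ to the pair given by its graded Hodge structure (a point of $D$, as $\mathrm{Gr}^W_0$ is rigid) and its real splitting of $W$ (a point of $\mathcal{S}(W)$, an affine space under $\bold{U}(\bR)\simeq V_\bR$); the inverse forms the direct sum of the prescribed graded Hodge structures along the prescribed real splitting, whose bigrading is automatically conjugation-symmetric and hence $\bR$-split. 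Checking that these assignments are mutually inverse and real-analytic gives $\cD_\bR\simeq D\times\mathcal{S}(W)$.

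The step I expect to be the main obstacle is the bookkeeping in (1): identifying the fiber of $\cD\to D$ with $V_\bC/F^0V_\bC$ and checking that a unipotent automorphism displaces the Hodge filtration by exactly the corresponding translation, all while tracking the polarizations on the graded pieces so that one never leaves the polarized domain. By contrast (2) is forced once the two-step shape of $W$ is used, and (3) is then a formal reconstruction, so the real work is in organizing the group action in (1).
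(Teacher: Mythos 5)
Your argument is correct and is essentially the standard one: the paper itself only records that (2) follows because extensions of $\bZ(0)$ by a weight $-1$ structure always split over $\bR$ (which is exactly your $\delta=0$ computation in disguise) and refers to \cite[Sec. 3--4]{BBKT24} for (1) and (3), where the transitivity-on-fibers and $D\times\mathcal{S}(W)$ identifications are carried out just as you describe. Your write-up simply makes explicit what the paper cites.
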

\begin{proof}
    (2) comes from the observation \eqref{eqn:exactsequenceZMHS} always splits over $\bR$. See \cite[Sec. 3-4]{BBKT24} for (1) and (3).
\end{proof}
\begin{remark}
    (2) is false in general, for example when $\cD$ contains a mixed Hodge structure not splitting over $\bR$.
\end{remark}
The normal function $\nu$ thus give rise to a mixed period map and its associated pure period map by taking the weight quotients:
\begin{equation}\label{eqn:normalfunctionmixedperiodmap}
    \begin{tikzcd}
    B \arrow[r, "\Phi"] \arrow[rr, bend left=30, "\Phi_0"] & \Gamma\backslash \mathcal{D} \arrow[r] & \Gamma_0\backslash D
    \end{tikzcd}
\end{equation}
where $\Gamma_0$ is the monodromy group of $\cV\rightarrow B$ and $\Gamma\simeq \Gamma_0\ltimes V_\bZ$ is the monodromy group of $\cE\rightarrow B$ where $V$ is the fiber at a chosen reference point.

\begin{definition}\label{def:algmonogroup}
    Let $\mathbb{K}\leq \bC$ be a field, the $\mathbb{K}$-algebraic monodromy group $\overline{\Gamma_0}^{\mathbb{K}}$ of $\cV\rightarrow B$ is the identity connected component of the $\mathbb{K}$-Zariski closure of $\Gamma_0$.
\end{definition}

\begin{remark}
    Let $M\leq \mathrm{Aut}(V_\bQ)$ be the generic Mumford-Tate group of the $\bZ$-PVHS $\cV\rightarrow B$ defined over $\bQ$, then $\overline{\Gamma_0}^{\bQ}$ is a normal subgroup of $M^{\mathrm{der}}$, the derived subgroup of $M$, see \cite{And92}.
\end{remark}

%-----------------------------------------
\subsection{More on rank of normal functions}
%-----------------------------------------

We keep definitions and notations from Section \ref{Sec03}. Suppose $\mathrm{rank}(\nu)=r$ for some $0\leq r\leq \mathrm{dim}(B)$, denote $r_c:=\mathrm{dim}(B)-r$ as the co-rank of $\nu$.

\begin{theorem}\cite[Thm 5.2]{GZ24}\label{Thm:GaoZhangthmrankpart}
    Suppose the algebraic monodromy group of $\cV\rightarrow B$ is simple, and the variation of Hodge structure $\cV\rightarrow B$ has no locally constant summand, then 
    \begin{equation}
        r=\mathrm{min}\{\mathrm{dim}(\Phi(B)), \frac{1}{2}\mathrm{dim}(V)\}.
    \end{equation}
\end{theorem}

\begin{theorem}\cite[Thm 3.2]{GZ24}\label{Thm:GaoZhangthm}
    $B=B_r\supset B_{r-1}\supset...\supset B_0$, where 
    \begin{equation}
        B_k:=\{b\in B, \ \mathrm{rank}(\nu_b)\leq k\}, \ 0\leq k\leq r
    \end{equation}
    are algebraic subvarieties of $B$.
\end{theorem}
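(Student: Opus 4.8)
The plan is to realize each $S_k$ as a set that is simultaneously closed complex-analytic and definable in the o-minimal structure $\bR_{\mathrm{an},\exp}$, and then to invoke definable Chow (o-minimal GAGA) to upgrade it to an algebraic subvariety. The inclusions $S_r\supseteq S_{r-1}\supseteq\cdots\supseteq S_0$ are immediate from the definition, and $S=S_r$ holds because $r$ is by definition the rank at a very general point, while the rank of a bundle map can only drop on closed loci. For the analyticity, I would use that $\pi_v\circ d\nu$ is an $\mathcal O_S$-linear map of holomorphic vector bundles $T_S\to\cV/\cF^0$: its contraction against a vector field $\xi$ is $\nabla_\xi v$ for a local lift $v$ of $\nu$, which is tensorial in $\xi$. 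Hence, locally, $S_k$ is exactly the common zero locus of the $(k+1)\times(k+1)$ minors of this bundle map, so it is a closed analytic subvariety of $S$; only its algebraicity remains to be established.

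The definability input is the core of the argument. Admissibility of $\nu$ lets us package it as the $\bZ$-PVMHS $\cE$ with mixed period map $\Phi$ of \eqref{eqn:normalfunctionmixedperiodmap}, and by the definability theorems for period maps of admissible variations \cite{BKT20,BBT22,BBKT24}, $\Phi\colon S\to\Gamma\backslash\cD$ is definable in $\bR_{\mathrm{an},\exp}$ for the canonical definable structure on $\Gamma\backslash\cD$. Since $\Phi$ is moreover holomorphic and horizontal, its derivative $d\Phi$ is holomorphic and again definable. I would then identify $\pi_v\circ d\nu|_s$ with the component of $d\Phi|_s$ landing in the ``extension'' (intermediate-Jacobian) direction of $T_{\Phi(s)}\cD$; concretely, using the description $\cD_\bR\simeq D\times\mathcal S(W)$ one reads $\mathrm{rank}(\nu_s)$ off from $d\Phi|_s$ by a fibrewise linear-algebraic recipe. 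As this recipe is built from the definable datum $d\Phi$ and rank conditions are determinantal, the function $s\mapsto\mathrm{rank}(\nu_s)$ is definable, and therefore each $S_k=\{\,s:\mathrm{rank}(\nu_s)\le k\,\}$ is a definable subset of $S$.

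To conclude, $S_k$ is a closed complex-analytic subset of the quasi-projective variety $S$ that is at the same time definable, so by the definable Chow theorem of Peterzil--Starchenko (as used throughout \cite{BKT20,BBT22}) it is algebraic; this yields the asserted chain of algebraic subvarieties. The step I expect to be the main obstacle is precisely the identification in the preceding paragraph: one must exhibit $\mathrm{rank}(\nu_s)$ as the fibrewise rank of a genuinely definable family of linear maps extracted from $d\Phi$. Two points require care. First, $\nu$ is only a section of the Jacobian, so the local lifts $v$ and the associated $\nabla_\xi v$ must be chosen definably over the universal cover, compatibly with the $V_\bZ$-translation part of $\Gamma\simeq\Gamma_0\ltimes V_\bZ$. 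Second, one must reconcile the holomorphic rank used in the definition with the real, Betti-type derivative that is most transparently definable --- the factor $\half$ recorded in the Remark following the definition of rank is exactly the bookkeeping needed to pass between the two.
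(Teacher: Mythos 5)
This theorem is not proved in the paper at all --- it is imported verbatim from \cite{GZ24} --- so there is no internal argument to compare against; I can only assess your proposal on its own terms. Your overall architecture (show $S_k$ is closed complex-analytic, show it is definable in $\bR_{\mathrm{an},\exp}$, then apply Peterzil--Starchenko) is the natural one and is consistent with the o-minimal toolkit the paper deploys in its appendix; the definability half is also plausible, since $d\tilde\Phi$ restricted to a fundamental set is definable and rank conditions are determinantal.

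The genuine gap is in the analyticity step, and it is not the bookkeeping issue you flag at the end but a structural one. The map $\xi\mapsto \nabla_\xi v \bmod \cF^0$ for a local \emph{holomorphic} lift $v$ is indeed a holomorphic bundle map $T_S\to\cV/\cF^0$, but it is not well defined (replacing $v$ by $v+f$ with $f\in\cF^0$ changes it by $\olnab f$, which is generally nonzero) and it does not compute $\mathrm{rank}(\nu_s)$. The intrinsic map whose rank is $\mathrm{rank}(\nu_s)$ is the vertical derivative taken with respect to the flat \emph{real} splitting $V_{s,\bC}=V_{s,\bR}\oplus F^0V_s$; unwinding it, one gets $\xi\mapsto [\nabla_\xi v]-\mathrm{II}_\xi\bigl(f_0(s)\bigr)$, where $\mathrm{II}$ is the second fundamental form of $\cF^0\subset\cV$ and $f_0(s)\in F^0V_s$ is the $F^0$-component of $v(s)$ relative to $V_{s,\bR}$. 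Since $f_0$ varies only real-analytically in $s$ (already for an elliptic fibration, where the correction term is $\tau'(s)\bigl(v_1(s)-\beta_1(s)\bigr)$ with $\beta_1$ a genuinely real-analytic Betti coordinate), the minors of this map are real-analytic, not holomorphic, and their zero loci are not visibly complex-analytic. This is precisely why the Betti foliation has complex leaves but is not a holomorphic foliation, and it is where the real content of Gao--Zhang's theorem lies: one needs an additional input (in \cite{GZ24}, an argument in the spirit of bi-algebraicity/Ax--Schanuel for the mixed period map) to convert the real-analytic, definable degeneracy loci into Zariski-closed ones. As written, your proof establishes definability but not the ``closed complex-analytic'' hypothesis of definable Chow, so the final step does not go through.
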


In other words, there exists a Zariski open subset $B^{\circ}\subset B$ such that $\mathrm{rank}(\nu_b)=r$ for every $b\in B^\circ$. Therefore after possibly replace $B$ by one of its Zariski open subset, we may assume $B_{r-1}=\emptyset$. The main result of this section is the following.
\begin{theorem}\label{Thm:foliationbyalgsubvar}
    $B$ admits a foliation by dimension-$r_c$ algebraic subvarieties on each of which the normal function $\nu$ is locally constant.
\end{theorem}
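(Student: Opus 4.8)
The plan is to exploit the mixed period map $\Phi\colon S \to \Gamma_0\backslash D$ in \eqref{eqn:normalfunctionmixedperiodmap} whose fibers encode exactly where $\nu$ is locally constant. Indeed, a normal function is locally constant precisely when its image under $\Phi$ is locally constant: recall from the lemma in Section \ref{Sec03} that $\nu$ has rank $0$ locally iff it admits a $\nabla$-flat lift, and the composite map $\Gamma\backslash\cD \to \Gamma_0\backslash D$ discards exactly the splitting-variety $\mathcal{S}(W)$ direction by part (3) of the preceding proposition, $\cD_\bR \simeq D \times \mathcal{S}(W)$. So the first step is to make precise that the positive-dimensional fibers of $\Phi$ are exactly the loci on which $\nu$ is locally constant, and that after restricting to the Zariski-open $S^\circ$ with $S_{r-1}=\emptyset$ these fibers all have the constant dimension $r_c = \dim S - r$.

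Next I would invoke the o-minimal/definability technology surveyed in this appendix. The key input is the definable Chow theorem (in the style of \cite{BBT22}, \cite{BBKT24}): the period map $\Phi$, when $S$ is given its natural algebraic structure and $\Gamma_0\backslash D$ is equipped with the $\bR_{\mathrm{an},\exp}$-definable structure on the relevant period domain, is a definable map. Concretely one uses that $\nu$, realized as the $\bZ$-PVMHS $\cE$ via \eqref{eqn:exactsequenceZMHS}, gives a definable period map into $\Gamma\backslash\cD$; composing with the definable projection $\Gamma\backslash\cD \to \Gamma_0\backslash D$ yields a definable map. By the o-minimal structure theory (definable triangulation / cell decomposition) a definable map has definable fibers of locally constant dimension on a definable stratification, which after shrinking to $S^\circ$ gives a definable foliation by the $r_c$-dimensional fibers.

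The crucial step is upgrading \emph{definable} to \emph{algebraic}. For this I would apply the definable Chow / o-minimal GAGA theorem: the fibers of $\Phi$ are closed definable analytic subvarieties of the quasi-projective $S$, hence algebraic. More precisely, the Stein factorization of the definable period map produces a factorization $S^\circ \to T \to \Gamma_0\backslash D$ where $S^\circ \to T$ is a dominant map with connected fibers onto a quasi-projective variety $T$ (this is the content of the algebraicity results underlying the Ax--Schanuel package in \cite{BKT20}, \cite{BBKT24}); the fibers of $S^\circ \to T$ are then the desired algebraic leaves, each of dimension $r_c$, and $\nu$ is locally constant on each leaf because $\Phi$ is constant there. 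The foliation on $S^\circ$ is the one whose leaves are these fibers, and it extends to the claimed foliation of $S$ after the shrinking already justified by Theorem \ref{Thm:GaoZhangthm}.

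The main obstacle I anticipate is the passage from definability of the fibers to their algebraicity as a genuine foliation: one must ensure the definable factorization actually lands in an algebraic base $T$ and that the fiber dimension is globally constant, rather than merely checking algebraicity of individual fibers. This is exactly where the o-minimal GAGA / definable Chow machinery is doing the real work, and where one must be careful that the target $\Gamma_0\backslash D$ carries a compatible definable structure so that Stein factorization outputs a quasi-projective variety. A secondary technical point is confirming that local constancy of $\nu$ is detected on the nose by constancy of the pure period map $\Phi$ (and not by the discarded $\mathcal{S}(W)$ coordinate); this follows from the splitting $\cD_\bR \simeq D\times \mathcal{S}(W)$ together with admissibility, but it should be stated carefully since, as the earlier remark notes, $\cD=\cD_\bR$ can fail in more general mixed situations.
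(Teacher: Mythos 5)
There is a genuine gap at the very first step: you identify the leaves of the foliation with the fibers of the composite $\Phi\colon S\to\Gamma\backslash\cD\to\Gamma_0\backslash D$, but this is the wrong map. The composite to $\Gamma_0\backslash D$ is the period map of the \emph{underlying pure} VHS $\cV$; its fibers are the loci where the Hodge structure is constant, and they carry no information about the extension class. Local constancy of $\nu$ is exactly constancy of the discarded coordinate: in the splitting $\cD\simeq D\times\mathcal{S}(W)$ with $\mathcal{S}(W)\simeq V_\bR$, the lemma you cite (existence of a $\nabla$-flat lift) says that $\nu$ is locally constant on $\zeta$ if and only if the $\mathcal{S}(W)$-component of the lifted mixed period map is a fixed $w\in V_\bR$ along $\zeta$. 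So the correct leaves are preimages of the slices $D\times\{w\}$, which are transverse to, not equal to, the fibers of the projection onto $D$. The paper's application makes the failure concrete: for the trigonal family the pure period map is generically immersive (algebraic monodromy $\mathrm{SU}(3,1)$), so the fibers of $S\to\Gamma_0\backslash D$ are points, yet the theorem must produce positive-dimensional leaves whenever $r_c>0$. Your proposed fibers would also not have dimension $r_c$ in general, and $\nu$ need not be locally constant on them since the extension class can move while the Hodge structure stays fixed.

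For comparison, the paper's proof first builds the foliation analytically (Proposition \ref{prop:foliationbysubmanifold}): the kernel of $\pi_v\circ d\nu$ is a rank-$r_c$ integrable holomorphic distribution on the open locus where the rank is constant, and its integral manifolds are the leaves, realized locally as projections of $v\cap(e+\cF^0\cV)$ for a flat section $e$. It then shows each individual leaf is definable (Proposition \ref{prop:leavesaredefinable}) by reducing to $(\Delta^*)^n$, lifting to a fundamental region $R^n\subset\cH^n$, and exhibiting the leaf as the projection of $\tilde{\Phi}^{-1}(\tilde{\Phi}(R^n)\cap(D\times\{w\}))$, using the definability of mixed period maps on fundamental sets from \cite{BBKT24}; definable Chow then gives algebraicity leaf by leaf. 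No Stein factorization or definable quotient $S^\circ\to T$ is needed (and none is readily available here, since the leaves are not fibers of a single map to an algebraic base). Your second and third paragraphs would become usable only after replacing the fibers of $\Phi$ by the level sets of the $\mathcal{S}(W)$-coordinate of the lifted mixed period map.
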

We first prove a weaker version of Theorem \ref{Thm:foliationbyalgsubvar}.
\begin{prop}\label{prop:foliationbysubmanifold}
    $B$ admits a foliation by dimension-$r_c$ complex submanifolds on each of which the normal function $\nu$ is locally constant, which we call the leaves of the foliation.
\end{prop}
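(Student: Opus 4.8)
The plan is to construct the foliation directly from the rank condition on the normal function, using the tangent-level description of rank from Section \ref{Sec03}. Since we have reduced to the case where $\mathrm{rank}(\nu_s)=r$ is constant on $S$ (having discarded the lower-rank locus $S_{r-1}$), the composite tangent map $\pi_v\circ d\nu|_s: T_sS\rightarrow V_s/F^0V_s$ has constant rank $r$ for every $s\in S$. Its kernel therefore defines a subbundle $\mathcal{K}\subset TS$ of constant rank $r_c=\dim(S)-r$. The first step is to verify that this distribution $\mathcal{K}$ is integrable, so that the holomorphic Frobenius theorem produces the desired foliation by $r_c$-dimensional complex submanifolds.

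First I would make precise the claim that, along a leaf, $\nu$ is locally constant. Working with the $\bR$-analytic identification $\cJ(\cV)\simeq \cV_\bR/\cV_\bZ$ from the remark in Section \ref{Sec03}, the vector $\pi_v\circ d\nu|_s(\xi)$ measures the infinitesimal movement of $\nu$ in the fiber direction $V_s/F^0V_s$ when we move in the base direction $\xi$. A tangent vector $\xi$ lies in $\mathcal{K}_s=\ker(\pi_v\circ d\nu|_s)$ exactly when $\nu$ has vanishing first-order variation in the direction $\xi$. The leaves of a foliation integrating $\mathcal{K}$ are then precisely the directions along which $\nu$ does not move, which is the infinitesimal form of ``locally constant.'' To upgrade this to genuine local constancy on each leaf, I would restrict the normal function to a leaf $\mathcal{L}$ and observe that $\pi_v\circ d(\nu|_\mathcal{L})$ vanishes identically, so by the Lemma characterizing rank-$0$ normal functions in Section \ref{Sec03}, $\nu|_\mathcal{L}$ is locally constant and admits a local flat lift in $\Gamma^\nabla(\cV)$.

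The main obstacle is establishing the integrability of $\mathcal{K}$, i.e.\ that $[\mathcal{K},\mathcal{K}]\subseteq\mathcal{K}$. This is where the horizontality and flatness of the normal function enter in an essential way. The cleanest route is to pass to the variation of mixed Hodge structure picture from \eqref{eqn:exactsequenceZMHS}, where $\nu$ corresponds to the mixed period map $\Phi$ of \eqref{eqn:normalfunctionmixedperiodmap}. The rank of $\nu$ is then the rank of (the fiber-direction part of) the differential of $\Phi$, and the kernel distribution $\mathcal{K}$ coincides with the fibers of $\Phi$ up to the $D$-direction. Concretely, I would argue that $\mathcal{K}$ is the kernel of the ``$\mathcal{S}(W)$-component'' of $d\Phi$ under the splitting $\cD_\bR\simeq D\times\mathcal{S}(W)$ from the Proposition in Section \ref{Sec07}; since a leaf along which the $D$-component may still vary but the $\mathcal{S}(W)$-extension class is constant corresponds exactly to a locally constant $\nu$, integrability follows from the fact that constancy of the extension class is an involutive (closed, flat) condition. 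I expect the subtle point to be handling the interaction between the $D$-direction (where $\Phi$ may genuinely vary) and the splitting direction, so that the distribution we integrate is honestly the kernel of $\pi_v\circ d\nu$ and not merely the kernel of the full mixed period differential. Once integrability is secured, Frobenius gives the foliation by complex submanifolds, and the algebraicity upgrade to Theorem \ref{Thm:foliationbyalgsubvar} is deferred to the o-minimal definability argument sketched in the remainder of the appendix.
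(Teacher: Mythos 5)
Your proposal is correct and takes essentially the same approach as the paper: the paper also works with the constant-rank kernel distribution of $\pi_v\circ d\nu$ after discarding $S_{r-1}$, but it sidesteps your Frobenius/involutivity worry by exhibiting the integral submanifolds directly as the projections to $S$ of the complex-analytic intersections $v\cap(e+\cF^0\cV|_U)$, where $v$ is a local holomorphic lift of $\nu$ and $e$ ranges over $\nabla$-flat local sections. This level-set construction (the locus where $v-e\in\cF^0\cV$) makes integrability and the complex-submanifold property of the leaves automatic, which is exactly the content of your observation that constancy of the extension class is the condition being integrated.
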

\begin{proof}
 The co-rank of $\nu$ at $b\in B$ equals to $r_c$ implies in a local neighborhood $b\in U\subset B$, a local holomorphic lift $v\in H^0(U,\cV)$ and some flat local section $e\in H^0_{\nabla}(U, \cV)$ such that
 \begin{equation}
     \mathrm{dim}\ T_p(v\cap (e+\cF^0\cV|_U))=r_c
 \end{equation}
 for a generic point $p\in v\cap (e+\cF^0\cV)$. The projection of this intersection on $U$ gives a $r_c$-dimensional complex submanifold of $U$.

Therefore, the condition that the co-rank of $\nu$ at every point $b\in B$ is $r_c$ means there is a rank-$r_c$ integrable holomorphic distribution on $B$ whose integral submanifolds are $r_c$-dimensional submanifolds of $B$.
\end{proof}
Another ingredient needed to prove Theorem \ref{Thm:foliationbyalgsubvar} is o-minimal geometry, see \cite{BBT22} for an introduction. We will show the following proposition:
\begin{prop}\label{prop:leavesaredefinable}
    The $r_c$-dimensional leaves in Proposition \ref{prop:foliationbysubmanifold} are all definable in the o-minimal structure $\bR_{\mathrm{an, exp}}$.
\end{prop}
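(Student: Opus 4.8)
The plan is to establish definability of the leaves by realizing them as fibers of a definable map, leveraging the definability of the period map associated to $\nu$. Recall from \eqref{eqn:normalfunctionmixedperiodmap} that $\nu$ gives rise to a mixed period map $\Phi: S\rightarrow \Gamma\backslash \cD$, and by the foundational work on o-minimal GAGA in Hodge theory \cite{BKT20}, \cite{BBKT24}, after endowing $\Gamma\backslash\cD$ with its natural definable structure in $\bR_{\mathrm{an,exp}}$, the map $\Phi$ is definable and indeed holomorphic-definable. The key point is that the leaves of Proposition \ref{prop:foliationbysubmanifold} are exactly the loci on which $\Phi$ is locally constant in the extension direction, i.e.\ the fibers of the induced map to the \emph{pure} period space $\Gamma_0\backslash D$ refined by the splitting data.

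First I would make precise the sense in which the foliation is cut out by $\Phi$. Using the decomposition $\cD_\bR\simeq D\times \mathcal{S}(W)$ from the proposition above, the normal function $\nu$ corresponds to the $\mathcal{S}(W)$-component of $\Phi$, while the underlying VHS corresponds to the $D$-component. Being locally constant along a leaf means precisely that the extension class (the $\mathcal{S}(W)$-coordinate, taken modulo the $\bZ$-lattice and modulo the image of $\cF^0$) is constant along that leaf. Thus a leaf through $s$ is a connected component of the level set $\{s'\in U : \pi_{\mathcal{S}(W)}\circ\tilde\Phi(s')=\pi_{\mathcal{S}(W)}\circ\tilde\Phi(s)\}$ for a local lift $\tilde\Phi$, which is exactly the intersection construction $v\cap(e+\cF^0\cV)$ appearing in the proof of Proposition \ref{prop:foliationbysubmanifold}.

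Second I would invoke the definability of $\Phi$. By \cite{BKT20}, \cite{BBT22} the period map $\Phi$ is definable in $\bR_{\mathrm{an,exp}}$ once $S$ is given its canonical definable structure as a quasi-projective variety and $\Gamma\backslash\cD$ its definable structure. The projection $\Gamma\backslash\cD\rightarrow \Gamma_0\backslash D$ is definable, and the fiber data recording the extension class is likewise definable since $\mathcal{S}(W)$ and the lattice quotient carry compatible definable structures. The leaves are then definable as fibers (level sets of a definable map), because definability is preserved under taking fibers and connected components of definable sets are definable. Since each leaf is already known to be a complex submanifold of the prescribed dimension $r_c$ by Proposition \ref{prop:foliationbysubmanifold}, its definability completes the proposition.

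The main obstacle I anticipate is \emph{technical bookkeeping of the definable structures rather than a conceptual gap}: one must verify that the splitting coordinate $\pi_{\mathcal{S}(W)}$, which a priori involves the $\bR$-splitting and the passage to the quotient $\cJ_h(\cV)$, is genuinely a definable function on the image of $\Phi$, and not merely real-analytic. This requires checking that the isomorphism $\cD_\bR\simeq D\times\mathcal{S}(W)$ is a definable isomorphism and that the $\Gamma$-action descends compatibly, so that ``constancy of the extension class'' is a definable condition on $S$. Once definability of the full mixed period map and the definability of the product splitting are in hand (both available in \cite{BBKT24}), the argument is formal; the care lies in assembling these inputs so that the leaves emerge as connected components of definable fibers.
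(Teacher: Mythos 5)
Your overall strategy is the paper's strategy: characterize a leaf as a level set of the $\mathcal{S}(W)$-coordinate of the mixed period map under the splitting $\cD\simeq D\times\mathcal{S}(W)$, and then deduce definability from the definability results of \cite{BBKT24}. The identification of the leaf through $s$ with a connected component of $\{s' : \pi_{\mathcal{S}(W)}\circ\tilde\Phi(s')=\pi_{\mathcal{S}(W)}\circ\tilde\Phi(s)\}$ is exactly the paper's characterization (the leaf is $\tilde\Phi^{-1}(D\times\{w\})$ for the appropriate $w\in V_\bR\simeq\mathcal{S}(W)$).

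However, the mechanism you propose for concluding --- ``the leaves are definable as fibers (level sets of a definable map)'' on $S$ --- does not literally work, and the obstacle you flag at the end is not mere bookkeeping: the splitting coordinate $\pi_{\mathcal{S}(W)}$ is \emph{not} a function on $\Gamma\backslash\cD$, nor on the image of $\Phi$, because $\Gamma\simeq\Gamma_0\ltimes V_\bZ$ acts nontrivially on the $\mathcal{S}(W)\simeq V_\bR$ factor (this is precisely why $\nu$ is only a section of $\cJ(\cV)=\cV_\bR/\cV_\bZ$ and not of $\cV_\bR$). So there is no globally defined definable map on $S$ whose fibers are the leaves, and ``definability is preserved under taking fibers'' cannot be invoked directly. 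The paper repairs this by (i) reducing to local charts $U_i\cap S\simeq(\Delta^*)^{k_i}\times\Delta^{r-k_i}$ near a simple normal crossing completion, (ii) restricting the lift $\tilde\Phi$ to a definable vertical strip $R^n\subset\cH^n$ surjecting onto $(\Delta^*)^n$, on which $\exp(2\pi i\cdot)$ is definable and $\pi_{\mathcal{S}(W)}\circ\tilde\Phi$ is an honest definable function (this uses \cite[Prop.\ 5.2--5.3, 6.5]{BBKT24}, asserting that $\tilde\Phi(R^n)$ lies in a definable fundamental set and that $\mathfrak{S}\times\Sigma\to\Gamma\backslash\cD$ is definable), and (iii) obtaining the leaf as the \emph{image} under the definable covering $R^n\to(\Delta^*)^n$ of the definable set $\tilde\Phi^{-1}(\tilde\Phi(R^n)\cap(D\times\{w\}))$, images of definable sets under definable maps being definable. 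Your write-up would be complete once this fundamental-domain step replaces the fiber-of-a-definable-map step; as written, that step is the gap.
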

Proposition \ref{prop:foliationbysubmanifold}, \ref{prop:leavesaredefinable} and the definable Chow theorem \cite{PS09} will imply Theorem \ref{Thm:foliationbyalgsubvar}. From this point, by definable we shall mean definable in the o-minimal structure $\bR_{\mathrm{an, exp}}$ unless a different o-minimal structure is specified.

%-----------------------------------------
\subsection{Proof of Proposition \ref{prop:leavesaredefinable}}
%-----------------------------------------
By using the $C^{\infty}$-isomorphism $\cJ(\cV)\simeq \cJ(\cV_\bR)$ we may regard $\nu$ as an element in $H^0(B, \cJ(\cV_\bR))$. 

Choose a projective completion $\hat{B}$ of $B$ such that $\hat{B}-B$ is a simple normal crossing divisor. We may take a finite open cover $\{U_i, \ i\in \mathfrak{I}\}$ of $\hat{B}$ such that $U_i\cap B\simeq (\Delta^*)^{k_i}\times (\Delta)^{r-k_i}$ for each $i$. For any complex analytic leaf $\zeta$ of $B$, showing $\zeta\cap U_i$ has a definable structure compatible with $U_i$ is enough for showing Proposition \ref{prop:leavesaredefinable}. We thus reduce to showing the case when $\nu$ is a normal function defined over $(\Delta^*)^n$.

Consider the associated mixed period map and its lift to $\cH^n$, where $\cH$ is the Siegel upper half space:
\begin{equation}
\begin{tikzcd}
\cH^n \arrow[d, "\mathrm{exp}(2\pi i\cdot)"] \arrow[r, "\tilde{\Phi}"] & \cD\simeq D\times \mathcal{S}(W) \arrow[d] \\
(\Delta^*)^n \arrow[r, "\Phi"] & \Gamma \backslash \cD
\end{tikzcd}
\end{equation}
The following two results are critical:
\begin{prop}\cite[Prop. 5.2-5.3]{BBKT24}
    The image of $\tilde{\Phi}(R^n)\rightarrow \cD$ is contained in a (finite union of) fundamental set of $\Gamma$ acting on $\cD$, where $R^n\subset \cH^n$ is a subset of the form:
    \begin{equation}
        \{(z_1,...,z_n)\in \cH^n \ | \ |\mathfrak{Re}(z_j)|\leq M, \ \mathfrak{Im}(z_j)\geq N\}
    \end{equation}
    for some $M,N\in \bR_+$. In particular, the composition $\tilde{\Phi}(R^n)\rightarrow \cD\rightarrow \Gamma\backslash \cD$ is definable.
\end{prop}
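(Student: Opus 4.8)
The plan is to exploit the $\bR$-split factorization $\cD\simeq D\times\mathcal{S}(W)$ established in the previous proposition, so that the lifted mixed period map decomposes as $\tilde{\Phi}=(\tilde{\Phi}_D,\tilde{\Phi}_{\mathcal{S}})$, where $\tilde{\Phi}_D\colon\cH^n\to D$ is the period map of the underlying $\bZ$-PVHS $\cV$ and $\tilde{\Phi}_{\mathcal{S}}\colon\cH^n\to\mathcal{S}(W)$ records the extension (normal function) data. Since $\Gamma\simeq\Gamma_0\ltimes V_\bZ$, with $\Gamma_0$ acting on $D$ and $V_\bZ$ acting by a full lattice of translations on the splitting variety $\mathcal{S}(W)$, a fundamental set for $\Gamma$ acting on $\cD$ can be assembled from a fundamental set $F_0$ for $\Gamma_0$ on $D$ together with a bounded box for the translation action of $V_\bZ$ on $\mathcal{S}(W)$. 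It therefore suffices to show that each component of $\tilde{\Phi}(R^n)$ lands in the corresponding factor and is definable there.

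For the pure factor I would invoke the definability of pure period maps on Siegel sets from \cite{BKT20}. By Schmid's nilpotent orbit theorem \cite{Sch73} one may write $\tilde{\Phi}_D(z)=\exp\bigl(\sum_j z_j N_j\bigr)\cdot\psi(q)$, where the $N_j$ are the commuting logarithms of the unipotent monodromies, $q_j=\exp(2\pi i z_j)$, and $\psi$ extends holomorphically across $q=0$; the $\mathrm{SL}_2$-orbit theorem then bounds the $D$-distance between $\tilde{\Phi}_D(z)$ and the associated nilpotent orbit uniformly over $R^n$. This places $\tilde{\Phi}_D(R^n)$ in a finite union of fundamental sets for $\Gamma_0$, and the induced map to $\Gamma_0\backslash D$ is definable in $\bR_{\mathrm{an,exp}}$ because the only occurrences of the $z_j$ are through $\mathfrak{Im}(z_j)=-\tfrac{1}{2\pi}\log|q_j|$ and $\mathfrak{Re}(z_j)$, which is bounded on $R^n$, both of which are $\bR_{\mathrm{an,exp}}$-definable.

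For the extension factor I would use the admissibility of $\nu$. Admissibility supplies the relative monodromy weight filtration, and the mixed analogue of the nilpotent orbit theorem (Kashiwara, and Saito in the admissible setting \cite{Sai96}, as organized for this purpose in \cite{BBKT24}) controls the asymptotics of $\tilde{\Phi}_{\mathcal{S}}$: the lifted extension class grows at most polynomially in the $z_j$ as $\mathfrak{Im}(z_j)\to\infty$, so after translating by the lattice $V_\bZ$ it lands in a bounded fundamental box in $\mathcal{S}(W)$. The polynomial-in-$z_j$ dependence is again a polynomial in the $\log|q_j|$ composed with a real-analytic function of $q$, hence definable. Combining the two factors, and using that the product of the two fundamental sets is a fundamental set for the semidirect product $\Gamma$ acting on $\cD$, yields the containment; definability of the composite $\tilde{\Phi}(R^n)\to\Gamma\backslash\cD$ follows from definability of each factor together with the definable quotient formalism of \cite{BBKT24}.

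The main obstacle is the extension component. Unlike the pure part, where the needed growth estimates are the classical ones, controlling $\tilde{\Phi}_{\mathcal{S}}$ demands the full force of admissibility: one must rule out any transcendental blow-up of the height of the extension class and secure genuinely polynomial growth in $\mathfrak{Im}(z_j)$. This is precisely where the relative weight filtration and the mixed $\mathrm{SL}_2$-orbit asymptotics are indispensable, and it explains why the statement is restricted to admissible normal functions. Once this growth bound is in place, the passage to $\bR_{\mathrm{an,exp}}$-definability is formal, being read off from the $q_j=\exp(2\pi i z_j)$ coordinates.
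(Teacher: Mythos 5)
This proposition is quoted directly from \cite[Prop.~5.2--5.3]{BBKT24}; the paper supplies no proof of its own, so there is nothing internal to compare your argument against. Your outline does match the overall strategy of the cited reference: split $\cD\simeq D\times\mathcal{S}(W)$, handle the pure factor by the nilpotent and $\mathrm{SL}_2$-orbit theorems as in \cite{BKT20}, handle the extension factor using admissibility and the relative weight filtration, and observe that on the strip $R^n$ the coordinates $z_j$ are $\bR_{\mathrm{an,exp}}$-definable functions of $q_j=\exp(2\pi i z_j)$.

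There is, however, one step that does not work as you have written it. You propose to build a fundamental set for $\Gamma\simeq\Gamma_0\ltimes V_\bZ$ as a product of a fundamental set for $\Gamma_0$ on $D$ with a bounded box in $\mathcal{S}(W)$, and to place the extension component inside that box ``after translating by the lattice $V_\bZ$.'' Translating by a $z$-dependent lattice element is not a definable operation (reduction of a polynomially growing function modulo a lattice is the standard example of a non-o-minimal map), and it changes the lift $\tilde{\Phi}$ rather than showing the original image lies in finitely many fundamental sets. Moreover, because the $V_\bZ$-action on the $\mathcal{S}(W)$-factor is twisted by the $D$-coordinate through the semidirect product structure, fundamental sets for $\Gamma$ on $\cD$ are Siegel-set-like regions, not literal products $\mathfrak{S}\times\Sigma$ with $\Sigma$ bounded; note that the companion proposition in the paper only asserts definability of $\mathfrak{S}\times\Sigma\to\Gamma\backslash\cD$ for bounded $\Sigma$, not that such a product contains $\tilde{\Phi}(R^n)$. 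What is actually needed, and what \cite{BBKT24} proves via the mixed nilpotent-orbit and $\mathrm{SL}_2$-orbit asymptotics, is that $\tilde{\Phi}(z)$ for $z\in R^n$ stays within bounded distance of $\exp\bigl(\sum_j z_jN_j\bigr)$ applied to a compact set, and that such a region is covered by finitely many $\Gamma$-translates of a single Siegel-type fundamental set. Your sketch correctly identifies admissibility as the essential hypothesis controlling the extension class, but the reduction to a bounded box should be replaced by this containment-in-a-Siegel-set argument.
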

\begin{prop}\cite[Prop. 6.5]{BBKT24}
    Using $\cD\simeq D\times \mathcal{S}(W)$, if $\mathfrak{S}\subset D$ is a fundamental set for the $\Gamma_0$-action on $D$ and $\Sigma\subset \mathcal{S}(W)$ is a bounded semi-algebraic subset, then $\mathfrak{S}\times \Sigma\rightarrow \Gamma\backslash \cD$ is definable in the o-minimal structure $\bR_{\mathrm{alg}}$.
\end{prop}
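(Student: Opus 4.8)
The plan is to exploit that, unlike the transcendental period map $\Phi$ (which involves $\mathrm{exp}(2\pi i\,\cdot)$ and is only $\bR_{\mathrm{an,exp}}$-definable), the restriction of the quotient projection to a fundamental set is governed entirely by the \emph{algebraic} action of $\Gamma$ on $\cD$, and is therefore definable in the smaller structure $\bR_{\mathrm{alg}}$. First I would unwind the product structure $\cD\simeq D\times\mathcal{S}(W)$ supplied by the preceding proposition. Since the unipotent radical $\bold{U}$ is the vector group $\mathrm{Hom}(\mathrm{Gr}^{\mathcal{W}}_{0}E,\mathrm{Gr}^{\mathcal{W}}_{-1}E)\simeq V$, the splitting variety is identified with the real vector space $\mathcal{S}(W)\simeq V_\bR=\bold{U}(\bR)$, on which $V_\bZ=\bold{U}(\bZ)$ acts by translations. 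Under $\Gamma\simeq\Gamma_0\ltimes V_\bZ$, an element $(\gamma_0,n)$ then acts on $D\times V_\bR$ by
\[
(F,s)\longmapsto(\gamma_0 F,\ \gamma_0 s+n),
\]
i.e.\ by the semialgebraic $\Gamma_0$-action on the real algebraic homogeneous space $D=G_\bR/H$ in the first factor and by an affine-linear map with integral translation part in the second. The decisive structural point is that \emph{each fixed} $\gamma\in\Gamma$ acts on $\cD$ as a semialgebraic (indeed rational) transformation.

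Next I would give $\cD$ its natural $\bR_{\mathrm{alg}}$-definable structure and $\Gamma\backslash\cD$ the quotient definable structure, so that proving $\mathfrak{S}\times\Sigma\to\Gamma\backslash\cD$ definable reduces to showing that the equivalence relation
\[
(x,s)\sim(x',s')\iff\exists(\gamma_0,n)\in\Gamma:\ \gamma_0 x=x',\ \gamma_0 s+n=s'
\]
cut out on $\mathfrak{S}\times\Sigma$ is semialgebraic. The input on the first factor is the Siegel-type finiteness underlying the pure-weight result of \cite{BKT20}: the set $\{\gamma_0\in\Gamma_0:\gamma_0\mathfrak{S}\cap\mathfrak{S}\neq\emptyset\}$ is finite, and the restricted uniformization $\mathfrak{S}\to\Gamma_0\backslash D$ is $\bR_{\mathrm{alg}}$-definable. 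For each of these finitely many $\gamma_0$, the translation is pinned down by boundedness of $\Sigma$: from $\gamma_0 s+n=s'$ with $s,s'\in\Sigma$ one gets $n\in\Sigma-\gamma_0\Sigma$, a bounded set, so $n$ ranges over the finite set $V_\bZ\cap(\Sigma-\gamma_0\Sigma)$. Hence only finitely many pairs $(\gamma_0,n)\in\Gamma$ can identify points of $\mathfrak{S}\times\Sigma$.

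I would then conclude by observing that, for each such pair, the locus $\{\,((x,s),(x',s'))\in(\mathfrak{S}\times\Sigma)^2:\gamma_0 x=x',\ \gamma_0 s+n=s'\,\}$ is the intersection of a semialgebraic set with the graph of a fixed affine-rational transformation, hence semialgebraic. The full equivalence relation is therefore a finite union of semialgebraic sets, so $\mathfrak{S}\times\Sigma\to\Gamma\backslash\cD$ is definable in $\bR_{\mathrm{alg}}$, as claimed. The gain of $\bR_{\mathrm{alg}}$ over $\bR_{\mathrm{an,exp}}$ is precisely that we never invoke the exponential uniformization, only the algebraic $\Gamma$-action on a semialgebraic fundamental set.

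The hard part will be making the two finiteness inputs interact compatibly: one must ensure that the finitely many admissible $\gamma_0$ coming from the pure factor and the finitely many translations $n$ coming from boundedness of $\Sigma$ assemble into a single semialgebraic relation, which requires that the $\Gamma_0$-action on $\mathcal{S}(W)\simeq V_\bR$ be algebraic and that the twisted bounded region $\Sigma-\gamma_0\Sigma$ meet the lattice $V_\bZ$ only finitely often. This is exactly where the fundamental-domain property and the algebraicity of the group action enter, and where no analytic (transcendental) ingredient is needed.
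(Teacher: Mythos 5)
The paper offers no proof of this proposition: it is imported verbatim from \cite[Prop.\ 6.5]{BBKT24} as an external ingredient, so the comparison must be against the source's argument rather than anything internal to the paper. Your reconstruction matches that argument in all essentials. The mechanism is exactly right: under $\cD\simeq D\times\mathcal{S}(W)$ with $\mathcal{S}(W)\simeq V_\bR$, the element $(\gamma_0,n)\in\Gamma_0\ltimes V_\bZ$ acts by $(F,s)\mapsto(\gamma_0F,\gamma_0s+n)$; the Siegel property from \cite{BKT20} makes $\{\gamma_0\in\Gamma_0:\gamma_0\mathfrak{S}\cap\mathfrak{S}\neq\emptyset\}$ finite; boundedness of $\Sigma$ forces $n\in V_\bZ\cap(\Sigma-\gamma_0\Sigma)$, a finite set for each fixed $\gamma_0$ since $\gamma_0$ is linear; and each fixed $\gamma$ acts semialgebraically on $\cD$ (the $G(\bR)$-action is algebraic on the compact dual, and $D$ is semialgebraic inside it). Finitely many semialgebraic graphs then cut out a semialgebraic identification relation, and no exponential ever enters, which is precisely why the statement lands in $\bR_{\mathrm{alg}}$ rather than $\bR_{\mathrm{an,exp}}$.

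One step deserves tightening. You write that you give $\Gamma\backslash\cD$ ``the quotient definable structure'' and then reduce to semialgebraicity of the equivalence relation on $\mathfrak{S}\times\Sigma$. As stated this risks circularity: an $\bR_{\mathrm{alg}}$-definable structure on $\Gamma\backslash\cD$ is not something one can posit in advance --- its existence is part of what \cite{BBKT24} establish, and it is constructed from a fixed choice of fundamental set $\mathfrak{F}\subset\cD$ serving as the atlas. Definability of your map $\mathfrak{S}\times\Sigma\to\Gamma\backslash\cD$ then means semialgebraicity of the incidence relation $\{(x,y)\in(\mathfrak{S}\times\Sigma)\times\mathfrak{F}: y\in\Gamma x\}$ between your set and that chart, not of the internal equivalence relation alone. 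This is a presentational fix rather than a gap: the Siegel property in \cite{BKT20} holds for pairs of fundamental sets, and your two finiteness inputs (finitely many admissible $\gamma_0$, then finitely many lattice translations from boundedness) apply verbatim to the pair $(\mathfrak{S}\times\Sigma,\mathfrak{F})$, yielding the same conclusion.
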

We are ready to finish our proof. Note that in the normal function case $\mathcal{S}(W)\simeq V_\bR$. For any $w\in V_\bR$ consider the subset $\cD_w:=D\times \{w\}\subset \cD$. $\cD_w$ is an $\bR_{\mathrm{alg}}$-definable subset in $\cD$, therefore $\tilde{\Phi}(R^n)\cap \cD_w$ is a definable subset in $\cD$ being contained in some fundamental set $\mathfrak{S}$ of $\Gamma$.

To sum up, choose $M,N$ properly such that $R^n\rightarrow (\Delta^*)^n$ is surjective, we have a diagram lies in the category of definable analytic spaces:
\begin{equation}
\begin{tikzcd}
R^n \arrow[d, "\mathrm{exp}(2\pi i\cdot)"] \arrow[r, "\tilde{\Phi}"] & \tilde{\Phi}(R^n) \arrow[d] & \tilde{\Phi}(R^n)\cap (D\times \{w\}) \arrow[l] \arrow[ld]\\
(\Delta^*)^n \arrow[r, "\Phi"] &  \Phi((\Delta^*)^n)
\end{tikzcd}
\end{equation}
By definition, a leaf $\zeta\subset (\Delta^*)^n$ given by Proposition \ref{prop:foliationbysubmanifold} is exactly the projection of $\tilde{\Phi}^{-1}(\tilde{\Phi}(R^n)\cap (D\times \{w\}))$ on $(\Delta^*)^n$ for some $w\in V_\bR$ (in which case $w$ is the constant lift of $\nu|_\zeta$), therefore must be definable.

%\begin{center}
%\begin{tikzpicture}[scale=1.2]
%\node at (0,2) {$\pi^{*}\cV$};
%\node at (0,0) {$\tilde{S}$};
%\node at (1.5,2) {$\cV$};
%\node at (1.5,0) {$S$};
%\node at (3,1) {$\cJ(\cV)$};
%\draw [->](0,1.6) -- (0,0.4) [line width = 0.3pt] node[midway,left] {};
%\end{tikzpicture}
%\end{center}

\

\noindent\textbf{Conflict of interest statement.} The author states that there is no conflict of interest.

 \ 

 \noindent\textbf{Data availability statement.} The author states that this is not applicable, as there are no associated data.

\printbibliography
\end{document}